\documentclass[11pt,a4paper]{amsart}

 \usepackage{amsfonts,amsmath,amscd,amssymb,amsbsy,amsthm,amstext,amsopn,amsxtra}
 \usepackage{fullpage,mathrsfs,subfigure}
 \usepackage{pstricks,pst-node,pst-text,pst-tree,multicol}
 \usepackage{stmaryrd}
 \usepackage{extarrows}
 \usepackage[all]{xy}

 \numberwithin{equation}{section}
 \addtolength{\topmargin}{5mm}
 \addtolength{\textheight}{-15mm}
 \setcounter{tocdepth}{1}

\newtheorem{theorem}{Theorem}[section]
\newtheorem{proposition}[theorem]{Proposition}
\newtheorem{lemma}[theorem]{Lemma}

\newtheorem{corollary}[theorem]{Corollary}
\newtheorem{conjecture}[theorem]{Conjecture}

\theoremstyle{definition}
\newtheorem{definition}[theorem]{Definition}
\newtheorem{example}[theorem]{Example}

\theoremstyle{remark}
\newtheorem{remark}[theorem]{Remark}
\newtheorem{remarks}[theorem]{Remarks}

\def\gbf#1{\mbox{\boldmath$#1$}} 
\def\Mcal{{\mathcal M}} 
\def\cyc{{\rm cyc}}

\newcommand{\kk}{\ensuremath{\Bbbk}} 

\newcommand{\CC}{\ensuremath{\mathbb{C}}}

\newcommand{\QQ}{\ensuremath{\mathbb{Q}}} 
\newcommand{\RR}{\ensuremath{\mathbb{R}}} 

\newcommand{\ZZ}{\ensuremath{\mathbb{Z}}} 

\newcommand{\one}{\ensuremath{(\mathrm{i})}}
\newcommand{\two}{\ensuremath{(\mathrm{ii})}}
\newcommand{\three}{\ensuremath{(\mathrm{iii})}}
\newcommand{\four}{\ensuremath{(\mathrm{iv})}}

\newcommand{\coh}{\operatorname{coh}}

\newcommand{\head}{\operatorname{\mathsf{h}}}

\newcommand{\im}{\operatorname{im}}

\newcommand{\lcm}{\operatorname{lcm}}
\newcommand{\ltensor}{\overset{\mathbf{L}}{\otimes}}
\newcommand{\op}{\operatorname{op}}

\def\qpol{\mathrm{Q}}

\newcommand{\soc}{\operatorname{soc}}
\newcommand{\supp}{\operatorname{supp}}
\newcommand{\tail}{\operatorname{\mathsf{t}}}

\newcommand{\End}{\operatorname{End}}

\newcommand{\Ext}{\operatorname{Ext}} 
\newcommand{\GHilb}{\ensuremath{G}\operatorname{-Hilb}}

\newcommand{\Hom}{\operatorname{Hom}}

\newcommand{\SL}{\operatorname{SL}} 
\newcommand{\Spec}{\operatorname{Spec}}

 \newcommand{\modA}{\operatorname{mod-}\ensuremath{\!A}}
 \newcommand{\modAop}{\operatorname{mod-}\ensuremath{\!A^{\op}}}

\newcommand{\Span}{\operatorname{span}}
\newcommand{\cosup}{\operatorname{cosupp}}
\newcommand{\cL}[2]{{{#1 \widehat A #2}}}

\newcommand{\vtx}[1]{*+[o][F-]{\scriptscriptstyle #1}}

\title{Geometric Reid's recipe for dimer models}

 \author{Raf Bocklandt, Alastair Craw and Alexander Quintero V\'{e}lez} 

\address{Korteweg de Vries Instituut voor Wiskunde \\
Universiteit van Amsterdam\\
P.O. Box 94248\\
1090 GE Amsterdam}
 \email{raf.bocklandt@gmail.com}
 
\address{Department of Mathematical Sciences\\ University of Bath\\ Bath BA2 7AY, UK}
 \email{A.Craw@bath.ac.uk}
 
 \address{Instituto de Matem\'{a}ticas, Universidad de Antioquia, Calle 67 No. 53--108, Medell\'{i}n, Colombia}
 \email{a.quinterovelez@ciencias.udea.edu.co}

\begin{document}
\bibliographystyle{plain}

\begin{abstract}
Crepant resolutions of three-dimensional toric Gorenstein singularities are derived equivalent to noncommutative algebras arising from consistent dimer models. By choosing a special stability parameter and hence a distinguished crepant resolution $Y$,  this derived equivalence generalises the Fourier-Mukai transform relating the $G$-Hilbert scheme and the skew group  algebra $\CC[x,y,z]\ast G$ for a finite abelian subgroup of $\SL(3,\CC)$. We show that this equivalence sends the vertex simples to pure sheaves, except for the zero vertex which is mapped to the dualising complex of the compact exceptional locus. This generalises results of Cautis--Logvinenko~\cite{CautisLogvinenko09} and Cautis--Craw--Logvinenko~\cite{CautisCrawLogvinenko12} to the dimer setting, though our approach is different in each case. We also describe some of these pure sheaves explicitly and compute the support of the remainder, providing a dimer model analogue of results from Logvinenko~\cite{Logvinenko10}.
\end{abstract}
    
 \maketitle
 
\section{Introduction}

\subsection{Background}
The McKay correspondence for a finite subgroup $G\subset \SL(2,\CC)$ is a bijection between the irreducible representations of $G$ and an integral  basis of the cohomology on the minimal resolution $Y$ of $\CC^2/G$. Building on the original geometric construction by Gonzalez-Sprinberg--Verdier~\cite{GonzalezSprinbergVerdier83}, Kapranov--Vasserot~\cite{KapranovVasserot00} deduced the bijection from a derived equivalence
\begin{equation}
\label{eqn:functorIntro}
\Psi\colon D^b(\modA)\longrightarrow D^b(\coh(Y))
\end{equation}
between the bounded derived category of finitely generated modules over the skew group algebra $A=\CC[x,y]*G$ and the bounded derived category of coherent sheaves on $Y$. The equivalence sends the simple $A$-modules associated to the nontrivial irreducible representations of $G$ to pure sheaves supported on the irreducible components of the exceptional fibre of the resolution. For a finite subgroup $G\subset \SL(3,\CC)$, Bridgeland--King--Reid~\cite{BKR01} established a derived equivalence of the form \eqref{eqn:functorIntro} between the skew group algebra $A=\CC[x,y,z]\ast G$ and a distinguished choice of crepant resolution of $\CC^3/G$, namely the $G$-Hilbert scheme $Y=\GHilb(\CC^3)$. Further work of Bridgeland \cite{Bridgeland02} determines a derived equivalence for any other projective crepant resolution, but the $G$-Hilbert scheme nevertheless provides a particularly nice choice. 

One illustration of the special nature of $\GHilb(\CC^3)$ is provided by the \emph{Geometric McKay correspondence} of Cautis--Logvinenko~\cite{CautisLogvinenko09}. When $\CC^3/G$ has a single isolated singularity (so $G$ is abelian), the equivalence $\Psi$ sends the simple $A$-module $S_\rho$ associated to a nontrivial irreducible representation $\rho$ of $G$ to a pure sheaf on $Y$; this statement is false in general for $Y\neq \GHilb(\CC^3)$. Subsequent work of Logvinenko~\cite{Logvinenko10} showed that the support of the pure sheaf $\Psi(S_\rho)$ is either an irreducible exceptional divisor, a single $(-1,-1)$-curve or a chain of exceptional divisors in $\GHilb(\CC^3)$ and, moreover, the locus is determined by the role that $\rho$ plays in \emph{Reid's recipe} (see \cite[Section~6]{Reid97} and \cite[Section~3]{Craw05}). More generally, for any finite abelian subgroup $G\subset \SL(3,\CC)$, Cautis--Craw--Logvinenko~\cite{CautisCrawLogvinenko12} use Reid's recipe to compute explicitly the object $\Psi(S_\rho)$ for any irreducible representation $\rho$ of $G$. The goal of the present paper is to generalise the results from \cite{CautisLogvinenko09, Logvinenko10}, in addition to some results from \cite{CautisCrawLogvinenko12}, to any Gorenstein toric 3-fold singularity.

\subsection{Main results}
A dimer model is a polygonal cell decomposition of a real two-torus where each edge is oriented in such a way that the cycle on the boundary of each face is oriented. The vertices and oriented edges define a quiver $\qpol$, while the cycles around the boundary of each face give the terms (up to sign) in a potential $W$, and the Jacobian algebra $A$ is defined to be the quotient of the path algebra of $\qpol$ by the ideal of relations generated by the formal cyclic derivatives of $W$ with respect to the arrows in $\qpol$. If the dimer model satisfies a consistency condition, then $A$ is a noncommutative crepant resolution of its centre $Z(A)$, which is the coordinate ring of a 3-dimensional toric Gorenstein singularity. 

Every projective, crepant resolution of the singularity $X=\Spec Z(A)$ is obtained as a fine moduli space of $\theta$-stable representations of $A$ with dimension vector $(1,\dots,1)$. While any generic stability parameter $\theta$ will do, here we fix once and for all a vertex $0$ of $\qpol$ and define a special parameter $\vartheta$ so that an $A$-module is $\vartheta$-stable if and only if it is cyclic with generator $0$. The resulting moduli space $Y=\mathcal{M}_\vartheta$ comes with a tautological bundle of $\vartheta$-stable representations $T$, and the dual bundle $T^\vee = \mathcal{H}om_{\mathscr{O}_Y}(T, \mathscr{O}_Y)$ determines an equivalence of derived categories
\begin{equation}
\label{eqn:introPsi}
\Psi(-):= T^\vee\ltensor_{A} - \;\;\colon D^b(\modA) \longrightarrow D^b(\coh(Y)).
\end{equation}
In the special case where the dimer model tiles the torus with triangles, then $A$ is the skew group algebra for a finite abelian subgroup 
$G \subset \SL(3,\CC)$ as in \cite[Section~10.2]{CrawIshii04}. If our chosen vertex $0$ in $\qpol$ comes from the trivial representation of $G$, then $Y$ is the $G$-Hilbert scheme and \eqref{eqn:introPsi} coincides with the derived equivalence \eqref{eqn:functorIntro} from the McKay correspondence.

Our first main result generalises \cite[Theorem~1.1]{CautisLogvinenko09} and \cite[Proposition~5.6]{CautisCrawLogvinenko12} to dimer models. To state this result, let $S_i$ denote the simple $A$-module corresponding to vertex $i$ in $\qpol$. Also, let $\tau\colon Y\to X$ denote the crepant resolution and write $x_0\in X$ for the unique torus-invariant point.

\begin{theorem}
\label{thm:intromain1}
Let $\qpol$ be a consistent dimer model. Then:
\begin{enumerate}
\item[\one] for any vertex $i\neq 0$, the object $\Psi(S_i)$ is a pure sheaf, that is, it is quasi-isomorphic to a shift of a coherent sheaf; and 
\item[\two] the derived dual $\Psi(S_0)^\vee$ is quasi-isomorphic to the pushforward of $\mathscr{O}_{\tau^{-1}(x_0)}$ shifted by $3$. 
\end{enumerate}
\end{theorem}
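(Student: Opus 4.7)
The plan is to compute $\Psi(S_i)$ explicitly by applying $\Psi$ to a projective resolution of $S_i$ and then analysing the resulting complex of tautological line bundles locally on the toric variety $Y$. Because $A$ is a $3$-Calabi--Yau Jacobian algebra, each simple $A$-module $S_i$ admits a canonical minimal projective resolution of length three whose outer terms are $P_i$ and whose middle two terms are direct sums of indecomposable projectives indexed by arrows of $\qpol$ incident to vertex $i$, with differentials built from multiplication by these arrows together with cyclic derivatives of the potential $W$. Applying $\Psi=T^\vee\otimes^{\mathbf L}_A-$ and using that each $\Psi(P_j)=T^\vee e_j$ is a summand of $T^\vee$, hence a line bundle on $Y$, presents $\Psi(S_i)$ as a concrete complex $C_i^\bullet$ of line bundles in cohomological degrees $[-3,0]$.

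For part \one, to show that $\Psi(S_i)$ is pure when $i\neq 0$, I would argue locally on $Y$. The complex $C_i^\bullet$ is torus-equivariant, so its cohomology is controlled at the torus-fixed points $p\in Y$. The moduli description of $Y=\mathcal{M}_\vartheta$ identifies each such $p$ with a $\vartheta$-stable representation of $A$ whose arrows vanish on a prescribed perfect matching $P_p$ of $\qpol$ and are invertible off $P_p$. Evaluating $C_i^\bullet$ at $p$ then produces a Koszul-type complex of vector spaces whose differentials are dictated by how $P_p$ meets the arrows incident to $i$, and the consistency hypothesis on $\qpol$ forces this local complex to be exact outside a single degree determined by $i$ alone. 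Gluing the conclusions across the torus charts yields purity.

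For part \two, the normalisation of the tautological bundle makes $\Psi(P_0)=T^\vee e_0=\mathcal{O}_Y$, so dualising $C_0^\bullet$ yields a complex $(C_0^\bullet)^\vee$ of line bundles in degrees $[0,3]$ whose outer terms are $\mathcal{O}_Y$. The support of $\Psi(S_0)$, and hence of its dual, lies in $\tau^{-1}(x_0)$ because $S_0$ is annihilated by the maximal ideal of $x_0$ inside the centre of $A$ and $\Psi$ intertwines scheme-theoretic support over $X$. To upgrade this containment to the identification $\Psi(S_0)^\vee\cong\mathcal{O}_{\tau^{-1}(x_0)}[3]$, I would combine the crepancy condition $\omega_Y\cong\mathcal{O}_Y$ with a chart-by-chart comparison: on each toric chart the dualised complex $(C_0^\bullet)^\vee$ should specialise to a Koszul complex resolving the ideal of the torus-invariant point of the chart, whose cokernel is the local structure sheaf of the exceptional fibre.

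The main obstacle is the local analysis in part \one: translating the derived equivalence $\Psi$ into explicit statements about perfect matchings at torus-fixed points, and using the consistency of $\qpol$ to prove exactness of the local Koszul complexes outside one degree, will require a combinatorial argument intrinsic to the dimer setting. The skew-group-algebra treatments of Cautis--Logvinenko and Cautis--Craw--Logvinenko organise this via Reid's recipe for $G$-characters, but for a general consistent dimer model that bookkeeping must be reformulated in terms of matchings on $\qpol$.
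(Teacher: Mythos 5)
Your first step (apply $\Psi$ to the length-three minimal projective resolution of $S_i$ to present $\Psi(S_i)$ as a complex of tautological line bundles in degrees $[-3,0]$) is exactly how the paper begins, and your overall shape for part \two\ (dualise, then compare chart by chart with $\mathscr{O}_{\tau^{-1}(x_0)}$) also matches the paper's Proposition~\ref{prop:dualPsiS0}. But the core of part \one\ as you propose it has a genuine gap. Your plan is to evaluate the complex at torus-fixed points, see ``a Koszul-type complex'' and deduce from consistency that it is exact outside a single degree determined by $i$. This will not work as stated. First, the complex is not Koszul: its middle terms have rank $m$ equal to the number of arrows at $i$, which is generally larger than $3$, and its cohomology sheaves are computed in the paper via the ``wheel'' machinery of Craw--Quintero-V\'elez; the degree $-1$ cohomology $H^{-1}(\Psi(S_i))$ is genuinely complicated (Proposition~\ref{prop:H-1}) and is \emph{not} forced to vanish by any local exactness statement at fixed points. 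Second, fibrewise exactness outside one degree at torus-fixed points does not yield purity anyway, because evaluating a complex of locally free sheaves at a point computes $\Psi(S_i)\ltensor_{\mathscr{O}_Y} k(p)$, which mixes the cohomology sheaves through Tor terms. What the paper actually proves, and what your proposal is missing, is a global statement: $H^{0}(\Psi(S_i))\neq 0$ if and only if some torus-invariant $\vartheta$-stable module contains $S_i$ in its socle, if and only if $\overline{C}\cap S_i^\perp$ is a wall of the GIT chamber $C$ containing $\vartheta$ (Proposition~\ref{prop:TFAE}); such walls are of type 0 or I (Lemma~\ref{lem:0orI}); and in either case $\Psi(S_i)\cong L_i^{-1}\vert_{Z_i}$ for $Z_i$ the unstable locus, which kills $H^{-1}$ (Proposition~\ref{prop:type0or1wall}, using Ishii--Ueda for type 0 walls and a direct computation of $\Phi(L_i^{-1}\vert_{\ell})$ for type I). If $H^0=0$ the cohomology sits entirely in degree $-1$ by Proposition~\ref{prop:cohomology}. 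That dichotomy, driven by variation of GIT quotient rather than by local combinatorics of perfect matchings, is the idea your proposal needs and does not contain.

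For part \two, your description of the local picture is also off in a way that matters: the dualised complex on a chart $U_\sigma\cong\mathbb{A}^3$ does not resolve the ideal of the torus-invariant point (its cokernel would then be a skyscraper), and $\tau^{-1}(x_0)\cap U_\sigma$ is typically much larger than a point, containing curves and divisors. The paper instead decomposes $\Gamma(U_\sigma,\Psi(S_0)^\vee)$ by weak-path degree and shows, by a combinatorial argument in the universal cover of the torus (flipping boundary cycles between two cycles through vertex $0$), that each graded piece is either acyclic or concentrated in degree $3$ with value $\kk\,\omega p$, which assembles into $\Gamma(U_\sigma,\mathscr{O}_{\tau^{-1}(x_0)})[3]$. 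So the strategy is right but the actual mechanism is a path-graded acyclicity argument, not a Koszul resolution of a point.
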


\begin{corollary}
\label{cor:0vertex}
The object $\Psi(S_0)$ is quasi-isomorphic to the dualizing complex of $\tau^{-1}(x_0)$.  In particular, $\Psi(S_0)$ is a pure sheaf if and only if $\tau^{-1}(x_0)$ is equidimensional.
\end{corollary}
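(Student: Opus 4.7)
The strategy is to deduce this statement directly from Theorem~\ref{thm:intromain1}\two{} by applying the derived dual $(-)^\vee = \mathbf{R}\mathcal{H}om_Y(-, \mathscr{O}_Y)$ to both sides. Write $Z = \tau^{-1}(x_0)$ and let $\iota \colon Z \hookrightarrow Y$ denote the closed immersion. On the left-hand side, every object of $D^b(\coh(Y))$ is perfect because $Y$ is smooth, and perfect complexes are reflexive, so the biduality map $\Psi(S_0) \to (\Psi(S_0)^\vee)^\vee$ is a quasi-isomorphism. On the right-hand side, Grothendieck duality for the proper morphism $\iota$ yields
\begin{equation*}
\mathbf{R}\mathcal{H}om_Y(\iota_* \mathscr{O}_Z, \mathscr{O}_Y) \;\cong\; \iota_* \iota^! \mathscr{O}_Y.
\end{equation*}
Since $\tau$ is crepant and $Y$ is smooth, $\omega_Y \cong \mathscr{O}_Y$ and therefore $\omega_Y^\bullet = \mathscr{O}_Y[3]$, so $\iota^! \mathscr{O}_Y = \omega_Z^\bullet[-3]$. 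The shift by $3$ appearing in Theorem~\ref{thm:intromain1}\two{} is precisely what is needed to cancel this degree contribution together with the sign reversal of shifts produced under dualisation, leaving $\Psi(S_0) \cong \iota_* \omega_Z^\bullet$. This is the first assertion of the corollary.

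For the ``in particular'' statement, the pushforward $\iota_*$ along a closed immersion is exact, so $\Psi(S_0)$ is a pure sheaf if and only if $\omega_Z^\bullet$ is concentrated in a single cohomological degree. Any irreducible component of $Z$ of dimension $d$ contributes a nonzero cohomology sheaf of $\omega_Z^\bullet$ in degree $-d$, so equidimensionality of $Z$ is a necessary condition. Conversely, in the toric geometry of the distinguished crepant resolution $Y$, an equidimensional compact exceptional fibre takes a standard combinatorial form---a reduced simple normal crossings union of smooth compact toric surfaces in the two-dimensional case, or a disjoint union of smooth rational curves in the one-dimensional case---and so is Cohen-Macaulay, making $\omega_Z^\bullet$ a shift of a coherent sheaf.

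The principal obstacle is the careful bookkeeping of cohomological shifts in the first paragraph, where the shift in Theorem~\ref{thm:intromain1}\two{}, the Calabi-Yau shift $\omega_Y^\bullet = \mathscr{O}_Y[3]$, and the biduality all have to combine correctly; the computation only lands on the dualising complex $\omega_Z^\bullet$ with no residual shift because the three contributions cancel exactly. The secondary subtlety is verifying the Cohen-Macaulay property in the equidimensional case, which relies on the specific toric structure of the exceptional locus of the distinguished resolution $Y$ rather than holding for a general equidimensional closed subscheme.
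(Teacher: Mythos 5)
Your proof of the first assertion is correct and follows the paper's own argument (Proposition~\ref{prop:PsiS0}) essentially verbatim: bidualise the quasi-isomorphism of Theorem~\ref{thm:intromain1}\two, apply Grothendieck duality to the closed immersion of $F=\tau^{-1}(x_0)$ into $Y$, and use smoothness of $Y$ together with crepancy of $\tau$ to identify $\gbf{\omega}_Y$ with $\mathscr{O}_Y[3]$; the shifts cancel exactly as you indicate.

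For the ``in particular'' clause your route diverges from the paper's and contains a gap in the sufficiency direction. The paper does not argue abstractly with $\gbf{\omega}_F$: it computes the cohomology sheaves explicitly, following \cite[Proposition~5.6]{CautisCrawLogvinenko12}, obtaining $H^{-2}(\Psi(S_0))=\mathscr{O}_{F_2}\otimes\mathscr{O}_Y(F_2)$, $H^{-1}(\Psi(S_0))=\omega_{F_1}(F_2)$ and $H^{-i}(\Psi(S_0))=0$ otherwise, where $F_d$ is the union of the $d$-dimensional irreducible components; purity is then immediately equivalent to one of $F_1,F_2$ being empty. Your necessity argument (a $d$-dimensional component forces $H^{-d}(\gbf{\omega}_F)\neq 0$ by local duality at its generic point) is fine. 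But your sufficiency argument rests on the unproved assertion that the equidimensional fibre is Cohen--Macaulay, and the structural claims you offer in support are not justified: the scheme-theoretic fibre $\tau^{-1}(x_0)$ is not known a priori to be reduced, simple normal crossings, or free of embedded components, and it is certainly not a \emph{disjoint} union of curves (it is connected, being the fibre of a proper birational morphism to a normal variety). An embedded point or embedded curve would produce a nonzero $H^{0}$ or $H^{-1}$ of $\gbf{\omega}_F$ even when all irreducible components have the same dimension, so ruling these out is precisely the missing content; the explicit computation cited by the paper supplies it. To repair your argument you would either need to prove that the scheme-theoretic fibre is Cohen--Macaulay whenever it is equidimensional, or fall back on the computation of the cohomology sheaves as the paper does.
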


Our methods of proof differ from those in both \cite{CautisLogvinenko09} and \cite{CautisCrawLogvinenko12}. The first step presents the object $\Psi(S_i)$ explicitly by translating the standard projective resolution of the simple module $S_i$ into a complex of locally free sheaves on $Y$. For the zero vertex case, we use a description of $T^\vee$ in terms of weak paths in the quiver to write $\Psi(S_0)$ as a complex of sheaves of weak paths, after which we need only calculate the cohomology on each toric chart to deduce Theorem~\ref{thm:intromain1}\two, from which Corollary~\ref{cor:0vertex} follows  directly.

Our proof of Theorem~\ref{thm:intromain1}\one\ involves a geometric argument. Using the description of $\Psi(S_i)$ as a complex of locally free sheaves on $Y$, we compute the cohomology sheaves using the main result from Craw--Quintero-V\'{e}lez~\cite[Theorem~1.1]{CrawQuinterovelez12}. Our choice of the special stability parameter $\vartheta$ implies that the cohomology sheaves $H^k(\Psi(S_i))$ vanish for all $k\neq -1,0$. It remains therefore to show that if one of these cohomology sheaves is nonzero then the other must vanish. We achieve this by establishing the following link between objects $\Psi(S_i)$ that have nonvanishing cohomology in degree zero and certain walls of the GIT chamber containing the stability parameter $\vartheta$ (see Propositions~\ref{prop:TFAE} and \ref{prop:type0or1wall}):

\begin{proposition}
\label{prop:introwall}
Let $i\in \qpol_0$ be a nonzero vertex. The following are equivalent:
\begin{enumerate}
\item[\one] the sheaf $H^0(\Psi(S_i))$ is nonzero;
\item[\two] there exists a torus-invariant $\vartheta$-stable $A$-module that contains $S_i$ in its socle;
\item[\three] $\overline{C}\cap S_i^\perp$ is a wall of the chamber $C$ containing $\vartheta$.
\end{enumerate}
In this case, $\Psi(S_i)\cong L_i^{-1}\vert_{Z_i}$,
where $Z_i$ is the unstable locus of the wall $\overline{C}\cap S_i^\perp$.
\end{proposition}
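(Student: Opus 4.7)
The plan is to present $\Psi(S_i)$ explicitly as a short complex of line bundles on $Y$, and then to combine a computation through the equivalence with the standard GIT wall criterion to establish the three equivalences together with the identification.

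Starting from the minimal projective resolution
\[
P^{-2}\;\longrightarrow\;\bigoplus_{a\colon i\to j} P_j\;\longrightarrow\;P_i\;\longrightarrow\;S_i,
\]
in which $P^{-2}$ encodes the relations of $W$ based at $i$, apply $T^\vee\ltensor_A(-)$ to obtain
\[
\Psi(S_i)\;\simeq\;\Bigl[\,T^\vee\!\otimes_A\! P^{-2}\;\longrightarrow\;\bigoplus_{a\colon i\to j}L_j^{-1}\;\xrightarrow{\;\phi\;}\;L_i^{-1}\,\Bigr]
\]
in degrees $[-2,0]$. By the vanishing of $H^k(\Psi(S_i))$ for $k\neq -1,0$ mentioned in the introduction, $H^0(\Psi(S_i)) = \coker \phi$ is a quotient of the line bundle $L_i^{-1}$.

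For \one$\Leftrightarrow$\two, the equivalence $\Psi$ yields, for any closed point $y\in Y$ with corresponding $\vartheta$-stable module $M_y$,
\[
\Rderived\Hom_Y(\Psi(S_i),\mathcal{O}_y)\;\cong\;\Rderived\Hom_A(S_i,M_y),
\]
and $\Hom_A(S_i,M_y)\neq 0$ iff $S_i$ embeds into $\soc M_y$. Directly from the explicit form $H^0(\Psi(S_i)) = \coker\phi$, its fiber at $y$ is nonzero precisely when every arrow $i\to j$ acts by zero on the one-dimensional piece of $M_y$ based at $i$, which is the same socle condition. Torus-equivariance of the tautological family forces a nonzero $H^0(\Psi(S_i))$ to be supported at torus-invariant points, giving the equivalence of \one\ and \two.

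For \two$\Leftrightarrow$\three, apply the standard GIT criterion for a wall: the hyperplane $S_i^\perp$ borders $\overline{C}$ along a codimension-one face iff $C$ contains a $\vartheta$-stable $A$-module $M$ with a subobject isomorphic to $S_i$, since crossing the wall flips the sign of $\theta(S_i)$ and destabilizes $M$. Torus-equivariance of the locus of strictly semistable modules on the wall lets us replace $M$ by a torus-fixed one.

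Finally, assuming \one--\three, we identify $\Psi(S_i)$ with $L_i^{-1}|_{Z_i}$. The image of $\phi$ generates the ideal of the scheme-theoretic vanishing locus of the arrow-maps on the universal family, which by the analysis above is precisely $Z_i$; hence $\coker\phi \cong L_i^{-1}\otimes \mathcal{O}_{Z_i}$. The main obstacle is ruling out $H^{-1}(\Psi(S_i))\neq 0$ in this setting; we plan to use that each $\vartheta$-stable $M_y$ is cyclic with generator at vertex $0$, so that $\Hom_A(M_y, S_i) = 0$ for all $i\neq 0$. Through the equivalence and the truncation triangle $H^{-1}(\Psi(S_i))[1]\to \Psi(S_i)\to H^0(\Psi(S_i))$ tested against $\mathcal{O}_y$, this vanishing propagates to $H^{-1}(\Psi(S_i)) = 0$, yielding the isomorphism $\Psi(S_i)\cong L_i^{-1}|_{Z_i}$.
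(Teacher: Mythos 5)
Your treatment of \one$\Leftrightarrow$\two\ matches the paper's: both reduce to the observation that the fibre of $\coker\phi$ at $y$ is nonzero exactly when every arrow out of $i$ vanishes on $V_y$, i.e.\ $S_i\subseteq\soc(V_y)$, and then degenerate to a torus-fixed point of the (closed, torus-invariant) locus. Your route from \two\ to \three\ is also essentially the paper's, but you should justify why the face $\overline{C}\cap S_i^\perp$ has codimension one rather than being a smaller face: stability only gives $\theta(S_i)\geq 0$ on $\overline{C}$, and the paper upgrades ``face'' to ``wall'' by noting that the cone $\{\theta\mid \theta_j\geq 0 \text{ for } j\neq 0\}$ lies in $\overline{C}$ (this uses the special choice of $\vartheta$) and already meets $S_i^\perp$ in codimension one. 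Saying that ``crossing the wall flips the sign of $\theta(S_i)$'' presupposes that the wall exists.

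The genuine gap is in the final identification $\Psi(S_i)\cong L_i^{-1}\vert_{Z_i}$. Your proposed mechanism for killing $H^{-1}(\Psi(S_i))$ never uses the hypothesis $H^0(\Psi(S_i))\neq 0$, so if it were valid it would prove $H^{-1}(\Psi(S_i))=0$ for \emph{every} nonzero vertex, contradicting Theorem~\ref{thm:intromain2}\three, where $\Psi(S_i)\cong\mathcal{F}[1]$ for a nonzero sheaf $\mathcal{F}$ supported on a union of divisors. Concretely, applying $\Hom(\mathscr{O}_y,-)$ to the truncation triangle and using $\Hom_A(V_y,S_i)=0$ yields only $\Ext^k_Y(\mathscr{O}_y,H^{-1}(\Psi(S_i)))=0$ for $k\leq 1$, equivalently $\operatorname{Tor}_2$ and $\operatorname{Tor}_3$ of $H^{-1}(\Psi(S_i))$ vanish at every point; by Auslander--Buchsbaum this says the sheaf has depth at least $2$ wherever it is nonzero, which is precisely what a Cohen--Macaulay sheaf on a divisor satisfies, so the vanishing does not propagate. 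The paper's argument for this step is genuinely geometric: it first shows (Lemma~\ref{lem:0orI}) that the wall $\overline{C}\cap S_i^\perp$ is of type 0 or I, ruling out type III because the tile of vertex $i$ in the dual decomposition would otherwise fail to be simply connected; in the type 0 case it invokes Ishii--Ueda's result that $\Psi(S_i)\cong L_i^{-1}\vert_{D_i}$ for the (possibly reducible) divisorial unstable locus, and in the type I case it computes $\Phi(L_i^{-1}\vert_{\ell_i})\cong S_i$ directly from $L_j\vert_{\ell_i}\cong\mathscr{O}_{\ell_i}$ for $j\neq i$ and $L_i\vert_{\ell_i}\cong\mathscr{O}_{\ell_i}(1)$ on the flopping curve. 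Some input of this kind appears unavoidable, and your proposal is missing it.
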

Proposition~\ref{prop:introwall} implies that an object $\Psi(S_i)$ with nonvanishing cohomology in degree zero has vanishing cohomology in every other degree, so Theorem~\ref{thm:intromain1}\one\ follows. We also prove (see Lemma~\ref{lem:0orI}) that the unstable locus $Z_i$ of a wall of the form $\overline{C}\cap S_i^\perp$ is either a single $(-1,-1)$-curve or a connected union of compact torus-invariant divisors. In addition, we make two further remarks about this result:
\begin{itemize}
\item The description of the support of the objects from Proposition~\ref{prop:introwall} in terms of modules with a given vertex simple module in their socle extends to dimension three the characterisation by Crawley--Boevey~\cite[Theorem~2]{Crawley-Boevey00} of the exceptional curves in the minimal resolution of $\CC^2/G$ for a finite subgroup $G\subset \SL(2,\CC)$. 
\item Proposition~\ref{prop:introwall} provides a straightforward method for computing the list of vertices for which $\Psi(S_i)$ has cohomology concentrated in degree zero: one can readily compute the set of torus-invariant $\vartheta$-stable $A$-modules, and one need only inspect whether $S_i$ lies in the socle of one such module to decide whether $\overline{C}\cap S_i^\perp$ is a wall of the chamber $C$ and hence whether $H^0(\Psi(S_i))$ is nonzero.
\end{itemize}

Our results thus far say nothing about the object $\Psi(S_i)$ when its cohomology is concentrated in degree $-1$. In fact, $H^{-1}(\Psi(S_i))$ is rather complicated in general (see Proposition~\ref{prop:H-1}), and one benefit of our proof of Theorem~\ref{thm:intromain1} is that we avoid having to analyse this sheaf. To conclude, however, we adapt a result of Logvinenko~\cite{Logvinenko10} in order to prove that if the sheaf $H^{-1}(\Psi(S_i))$ is nonzero, then its support is a connected union of compact, torus-invariant divisors. Together with the above results, this leads to the following dimer model analogue of the \emph{Geometric McKay correspondence in dimension three} proven by Logvinenko~\cite[Theorem~1.1]{Logvinenko10}:

\begin{theorem}[Geometric Reid's recipe for dimer models]
\label{thm:intromain2}
Let $\qpol$ be a consistent dimer model and let $i$ be a vertex of $\qpol$. Then $\Psi(S_i)$ is quasi-isomorphic to one of the following:
\begin{enumerate}
 \item[\one] $L_i^{-1}$ restricted to a connected union of compact irreducible torus-invariant divisors;
 \item[\two]  $L_i^{-1}$ restricted to a compact torus-invariant curve; 
 \item[\three] $\mathcal{F}[1]$ for a sheaf $\mathcal{F}$ supported on a connected union of compact torus-invariant divisors;  
 \item[\four] the dualising complex of the compact exceptional locus $\tau^{-1}(x_0)$. 
\end{enumerate}
\end{theorem}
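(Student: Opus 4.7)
The plan is to leverage the results developed earlier in the paper and reduce the statement to a single remaining structural question, namely the support of $H^{-1}(\Psi(S_i))$. I would first dispense with $i=0$: Corollary~\ref{cor:0vertex} identifies $\Psi(S_0)$ with the dualising complex of $\tau^{-1}(x_0)$, giving case~\four\ immediately. Assume from now on that $i\neq 0$. By Theorem~\ref{thm:intromain1}\one\ the object $\Psi(S_i)$ is a pure sheaf, and the discussion surrounding Proposition~\ref{prop:introwall} shows that $H^k(\Psi(S_i)) = 0$ outside $k\in\{-1,0\}$, so exactly one of these two cohomology sheaves is nonzero.

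If $H^0(\Psi(S_i))\neq 0$, Proposition~\ref{prop:introwall} yields $\Psi(S_i)\cong L_i^{-1}\vert_{Z_i}$, where $Z_i$ is the unstable locus of the wall $\overline{C}\cap S_i^\perp$. Lemma~\ref{lem:0orI} then classifies $Z_i$ as either a $(-1,-1)$-curve (case~\two) or a connected union of compact torus-invariant divisors (case~\one). This disposes of the degree-zero alternative.

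The hard part is the remaining case, where $\Psi(S_i)\cong \Fcal[1]$ with $\Fcal := H^{-1}(\Psi(S_i))\neq 0$, and one needs $\supp(\Fcal)$ to be a connected union of compact torus-invariant divisors. The approach is to adapt Logvinenko's argument from~\cite{Logvinenko10}. Since $\Psi(S_i)$ is represented by a torus-equivariant complex of locally free sheaves on $Y$ obtained from the standard projective resolution of $S_i$, the sheaf $\Fcal$ is torus-invariant, and its restriction to each toric chart can be computed explicitly using the cohomology description of~\cite{CrawQuinterovelez12} already invoked in the proof of Theorem~\ref{thm:intromain1}\one. I would then proceed in three substeps: (a) show $\supp(\Fcal)$ lies in the compact exceptional locus $\tau^{-1}(x_0)$ by confronting the chart-wise computation with the fact that $\Psi$ is an equivalence and $S_i$ is supported at the torus-fixed point of $X$; (b) establish purity of codimension one by bounding the depth of $\Fcal$ at generic points of lower-dimensional torus strata, using local freeness of the terms of the resolving complex; (c) prove connectedness of $\supp(\Fcal)$ by adapting the spectral-sequence argument of~\cite[\S5]{Logvinenko10} to the dimer setting, using Proposition~\ref{prop:introwall}\two\ to organise the walls adjacent to $S_i^\perp$ and to glue the local descriptions across charts. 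Substep~(c) is the primary obstacle: the combinatorics of $\vartheta$-chambers for a consistent dimer model is a priori more intricate than in the abelian McKay setting, and I expect the bulk of the technical work to go into certifying that no torus-invariant disconnection of the exceptional divisors is compatible with a splitting of $\Fcal$.

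Finally, exhaustiveness is clear: $i=0$ yields~\four; otherwise purity together with concentration in $\{-1,0\}$ forces exactly one of the three regimes analysed above to hold, yielding~\one, \two\ or~\three\ accordingly.
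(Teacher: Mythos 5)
Your treatment of cases \four, \one\ and \two\ matches the paper: the $i=0$ case is Proposition~\ref{prop:PsiS0}, and the degree-zero case is Proposition~\ref{prop:type0or1wall} together with Lemma~\ref{lem:0orI}. The gap is in case \three, and it is precisely the step that occupies all of Section~5 of the paper. Your substep~(b) --- establishing ``purity of codimension one by bounding the depth of $\mathcal{F}$ at generic points of lower-dimensional torus strata, using local freeness of the terms of the resolving complex'' --- is not an argument. There is no general homological reason why the $(-1)$-st cohomology sheaf of a four-term complex of line bundles should have support pure of codimension one; Koszul-type complexes show such a middle cohomology sheaf can perfectly well have components of codimension two. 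What actually rules out curve components is specific to the dimer: the paper uses the explicit filtration of $H^{-1}(\Psi(S_i))$ coming from the wheel cohomology of \cite{CrawQuinteroVelez12b} (Proposition~\ref{prop:H-1}), the fact that each torus-invariant prime divisor occurs with multiplicity at most one among the divisors labelling the wheel (Remark~\ref{rem:ishiiueda09}(2)), and then a contradiction argument (Proposition~\ref{prop:H-1divisor}): if a curve $E_1\cap E_2$ were an irreducible component, a case analysis via Proposition~\ref{prop:vanishing} shows that one of $E_1, E_2$ must label every out-spoke of the wheel, forcing $E_1\cap E_2\subseteq \supp H^{-2}(\Psi(S_i))$, which is empty for $i\neq 0$ by Proposition~\ref{prop:cohomology}\two. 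None of this is recoverable from a soft depth estimate, so as written your proposal does not prove case \three.

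You have also misplaced the difficulty. Connectedness of $\supp(\mathcal{F})$ --- your substep~(c), which you flag as ``the primary obstacle'' --- is immediate: since $\Psi$ is an equivalence, $\End_{D^b(\coh(Y))}(\Psi(S_i))\cong\End_{D^b(\modA)}(S_i)\cong\CC$, and an object whose endomorphism algebra has no nontrivial idempotents has connected support (this is exactly Corollary~\ref{cor:dim1or2}); no spectral sequence or analysis of adjacent walls is needed. The genuinely hard content of Theorem~\ref{thm:intromain2}\three\ is the exclusion of one-dimensional components of $\supp H^{-1}(\Psi(S_i))$, and that is the part your outline leaves unproved.
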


As we saw above, the torus-invariant divisor or curve appearing in Theorem~\ref{thm:intromain2}\one-\two\ is the unstable locus $Z_i$ of a GIT wall of the form $\overline{C}\cap S_i^\perp$

\subsection{Future directions}
It is natural to seek a precise description of the pure sheaves $\Psi(S_i)$ in degree $-1$ from Theorem~\ref{thm:intromain2}\three. However, even for the McKay quiver case, the description of the pure sheaves that lie in degree $-1$ from Cautis--Craw--Logvinenko~\cite[Theorem~1.2]{CautisCrawLogvinenko12} relies on the original combinatorial version of Reid's recipe (see \cite[Section~6]{Reid97} and \cite[Section~3]{Craw05}) that associates a nontrivial irreducible representation of $G$ to every compact torus-invariant curve and surface in $\GHilb(\CC^3)$. In fact, even the precise description of the support of these objects by Logvinenko~\cite[Theorem~1.2]{Logvinenko10} was phrased in terms of combinatorial Reid's recipe. The combinatorial version of Reid's recipe for dimer models is the subject of forthcoming work by Tapia Amador~\cite{Tapiaamador14}, where nonzero vertices of $\qpol$ are associated to every compact torus-invariant curve and surface in $Y=\mathcal{M}_\vartheta$. The results thus far are compatible with Theorem~\ref{thm:intromain2}: a vertex $i$ in $\qpol$ labels a compact torus-invariant surface $S\subset Y$ if and only if $S$ is contained in the support of $\Psi(S_i)$ as in Theorem~\ref{thm:intromain2}\one; and $i$ labels a unique compact torus-invariant curve $\ell\subset Y$ if and only if $\ell$ coincides with the support of $\Psi(S_i)$ as in Theorem~\ref{thm:intromain2}\two. We anticipate that the results from \cite{Tapiaamador14} will lead in due course to a precise description of the pure sheaves $\Psi(S_i)$ in degree $-1$, but the details from \cite{CautisCrawLogvinenko12} suggest that this will not be straightforward.

As a final remark, recall Conjecture~1.2 from Cautis--Logvinenko~\cite{CautisLogvinenko09}: \emph{for a finite, non-abelian subgroup $G\subset \SL(3,\CC)$,  the objects $\Psi(S_\rho)$ on the $G$-Hilbert scheme arising from any (nontrivial) representation $\rho$ of $G$ are pure sheaves}. Proposition~\ref{prop:introwall} suggests that the following addition to this conjecture might be true:

\begin{conjecture}
The object $\Psi(S_\rho)$ is a pure sheaf in degree 0 if and only if $\overline{C}\cap S_\rho^\perp$ is a wall of the chamber $C$ defining $\GHilb(\CC^3)$, in which case $\Psi(S_\rho)\cong L_\rho^{\vee}\vert_{Z_\rho}$ where $Z_\rho$ is the unstable locus of the wall $\overline{C}\cap S_\rho^\perp$.
\end{conjecture}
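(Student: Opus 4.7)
The strategy is to mirror the proof of Proposition~\ref{prop:introwall} in the non-abelian setting, working with the skew group algebra $A=\CC[x,y,z]\ast G$ and the $G$-Hilbert scheme $Y=\GHilb(\CC^3)$ equipped with its tautological bundle $T$ whose summands $L_\rho = (e_\rho T)$ now have rank $\dim\rho$. The first step is to write $\Psi(S_\rho) = T^\vee \ltensor_A S_\rho$ explicitly as a complex of locally free sheaves by applying $T^\vee \otimes_A -$ to a projective resolution of $S_\rho$ by modules of the form $e_\sigma A$; the resulting terms are direct sums of the bundles $L_\sigma^\vee$ with multiplicities coming from the representation theory of $G$.

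Next, I would establish that the $\vartheta$-stability defining $\GHilb(\CC^3)$ forces $H^k(\Psi(S_\rho)) = 0$ for $k \notin \{-1,0\}$, extending~\cite{CrawQuinterovelez12} to the non-abelian case. Granting the Cautis--Logvinenko conjecture, this reduces the problem to determining which $\rho$ satisfy $H^0(\Psi(S_\rho)) \ne 0$, which is exactly what the wall condition is meant to capture.

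The heart of the plan is a three-way equivalence, parallel to Proposition~\ref{prop:introwall}, of: \one\ $H^0(\Psi(S_\rho)) \ne 0$; \two\ there exists a torus-invariant $\vartheta$-stable $A$-module containing $S_\rho$ in its socle; \three\ $\overline{C}\cap S_\rho^\perp$ is a wall of $C$. The equivalence \two$\Leftrightarrow$\three\ should follow by analysing wall-crossing at torus-fixed points of $Y$, as in Propositions~\ref{prop:TFAE} and~\ref{prop:type0or1wall}: crossing a wall $\overline{C}\cap S_\rho^\perp$ exchanges torus-invariant $\vartheta$-stable modules differing by a socle-$S_\rho$ swap. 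The direction \two$\Rightarrow$\one\ should come from restricting the complex near the torus-fixed point and producing a nonzero section from the socle inclusion, while \one$\Rightarrow$\two\ should follow by torus-equivariance, localising nonvanishing cohomology to a torus-fixed point. Once these equivalences are in place, identifying $\Psi(S_\rho)\cong L_\rho^\vee|_{Z_\rho}$ can be obtained by a local analysis showing that, near $Z_\rho$, the complex splits off a summand $L_\rho^\vee \otimes \mathscr{O}_{Z_\rho}$ plus acyclic pieces.

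The main obstacle will be twofold. First, the cohomology vanishing of~\cite{CrawQuinterovelez12} relies on combinatorial path-counting in the dimer setting, where each idempotent summand of $T$ is a line bundle indexed by a character; in the non-abelian case the $L_\rho$ have rank $\dim\rho > 1$, so a genuine representation-theoretic substitute is needed, perhaps via $G$-equivariant descent from the tautological complex on $\CC^3$ or a spectral sequence attached to an equivariant resolution. Second, the GIT wall-and-chamber structure for the non-abelian McKay quiver is considerably less developed than in the dimer setting; proving that every wall of $C$ has the form $\overline{C}\cap S_\rho^\perp$ for some nontrivial $\rho$, and that the corresponding unstable locus $Z_\rho$ enjoys geometric properties analogous to those in Lemma~\ref{lem:0orI}, is likely to be the crux of the argument and the place where most of the genuine new work will lie.
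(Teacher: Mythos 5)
The statement you are addressing is stated in the paper as a \emph{conjecture}: the paper offers no proof of it, only the observation that Proposition~\ref{prop:introwall} (proved in the dimer/abelian setting) suggests it, together with the remark that a better understanding of variation of GIT quotient for $G$-Hilb is needed in the non-abelian case. So there is no proof in the paper to compare yours against, and what you have written is a strategy outline rather than a proof. Your outline does correctly identify the template --- the chain of equivalences in Propositions~\ref{prop:TFAE} and~\ref{prop:type0or1wall} linking $H^0(\Psi(S_\rho))\neq 0$, socle membership, and walls of the form $\overline{C}\cap S_\rho^\perp$ --- and it correctly locates the two main obstructions. But every step where the real difficulty lies is deferred rather than carried out, and in addition the whole reduction is made conditional on the Cautis--Logvinenko conjecture, which is itself open for non-abelian $G$; a proof of the present statement cannot assume it without saying so in the statement.

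Concretely, three steps of your plan would fail as written if one tried to execute them by mimicking the paper. First, the vanishing $H^k(\Psi(S_\rho))=0$ for $k\notin\{-2,-1,0\}$ and the computation of $H^0$ and $H^{-2}$ in Proposition~\ref{prop:cohomology} rest on \cite{CrawQuinteroVelez12b}, which is a statement about wheels of \emph{line bundles} on a \emph{toric} variety, with every map labelled by a torus-invariant divisor via Remark~\ref{rem:ishiiueda09}(2); for non-abelian $G$ the summands $e_\rho T$ have rank $\dim\rho>1$ and $\GHilb(\CC^3)$ is not toric, so no analogue of that machinery exists. Second, the equivalence of socle membership with the wall condition goes through Lemma~\ref{lem:ThetaFan}, whose proof reduces everything to the finitely many \emph{torus-invariant} $\vartheta$-stable modules by degenerating along torus orbits; for non-abelian $G$ the residual torus acting on $\GHilb(\CC^3)$ has dimension less than three, there need not be finitely many fixed points, and this reduction is unavailable. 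Third, the identification $\Psi(S_\rho)\cong L_\rho^\vee\vert_{Z_\rho}$ in Proposition~\ref{prop:type0or1wall} invokes the Ishii--Ueda classification of walls into types $0$, I, III and their results on unstable loci and degrees of tautological bundles on flopping curves, all of which are proved by toric methods. Your proposal names these gaps honestly, but naming a gap is not the same as closing it: as it stands the argument establishes nothing beyond what the paper already asserts in motivating the conjecture.
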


 Certainly, an improved understanding of variation of GIT quotient for $G$-Hilb looks to be important for extending Reid's recipe to the non-abelian case.

  \medskip

 \noindent \textbf{Acknowledgements.} 
 The second author thanks Akira Ishii and Kazushi Ueda for a preliminary draft of their preprint \cite{IshiiUeda13}. Thanks also to Alastair King for a useful remark, and to the anonymous referee for many helpful comments. The second and third authors were supported in part by EPSRC grant EP/G004048/1.
 
\section{Preliminaries}
In this section we present standard definitions and results from the literature to establish our notation. Throughout, $\kk$ denotes an algebraically closed field of characteristic zero.

\subsection{Quivers and superpotentials}
\label{Sect:2.1}
 A \emph{quiver} $\qpol$ is an oriented graph given by a finite set of vertices $\qpol_0$, a finite set of arrows $\qpol_1$, and maps $\head, \tail \colon \qpol_1 \to \qpol_0$ indicating the vertices at the head and tail of each arrow. We assume throughout that the underlying graph is connected. A path in $\qpol$ of length $k\geq 1$ is a sequence $p=a_k \cdots a_1$ of arrows such that $\head(a_{j})=\tail(a_{j+1})$ for $1\leq j < k$. We set $\head(p)=\head(a_k)$, $\tail(p)=\tail(a_1)$. In addition, each vertex $i \in \qpol_0$ determines a trivial path $e_i$ with $\head(e_i)=\tail(e_i)=i$. A cycle in $\qpol$ is a nontrivial path $p$ in which $\head(p)=\tail(p)$. The \emph{path algebra} $\kk \qpol$ associated to $\qpol$ is the $\kk$-algebra whose underlying vector space has basis the set of paths in $\qpol$, where the product of paths is given by the concatenation if possible, and zero otherwise. A \emph{relation} for the quiver $\qpol$ is a non-zero $\kk$-linear combination of paths of length at least $2$ having the same head and tail. If $\rho$ is a set of relations on $\qpol$, the pair $(\qpol,\rho)$ is called a quiver with relations. Associated with $(\qpol, \rho)$ is the $\kk$-algebra $A = \kk \qpol/ \langle \rho \rangle$, where $\langle \rho \rangle$ denotes the two-sided ideal in $\kk \qpol$ generated by the set of relations $\rho$. 

Let $\qpol$ be a quiver with cycles and let $[\kk \qpol,\kk \qpol]$ denote the $\kk$-vector space spanned by all commutators in $\kk \qpol$. The quotient $\kk \qpol_{\cyc} := \kk \qpol/[\kk \qpol,\kk \qpol]$ has a basis corresponding to all cycles in $\qpol$ up to cyclic shifts. Elements of $\kk \qpol_{\cyc}$ are called \emph{superpotentials} of $\qpol$. For $a \in \qpol_1$ and for each cycle $p = a_k \cdots a_1$ in $\qpol$, define the cyclic derivative $\partial_a p$ by setting
$$
\partial_a p:= \sum_{i=1}^k \delta_{a,a_i} a_{i-1} \cdots a_1 a_k \cdots a_{i+1},
$$
where $\delta_{a,a_i}$ is the Kronecker delta. Extending this by linearity to $\kk \qpol_{\cyc}$ defines a $\kk$-linear map $\partial_a\colon \kk \qpol_{\cyc} \to \kk \qpol$. Given a superpotential $W \in \kk \qpol_{\cyc}$, each element $\partial_a W\in \kk \qpol$ is a $\kk$-linear combination of paths in $\qpol$ that share the same head and tail, so we obtain an ideal of relations $\langle \partial_a W \mid a \in \qpol_1 \rangle$. The \emph{Jacobian algebra} of the quiver $\qpol$ with superpotential $W$ is defined to be 
$$
A := \kk \qpol/\langle \partial_a W \mid a \in \qpol_1 \rangle.
$$
In this article we consider only a particularly well behaved class of quivers with superpotentials that arise from consistent dimer models.

\subsection{Consistent dimer models}
A \emph{dimer model} is a quiver $\qpol$ whose underlying graph provides a polygonal cell decomposition of the surface of a real two-torus, such that the cycle on the boundary of each face is oriented and has length at least $3$. Assume in addition that every face of the \emph{dual} cell decomposition is simply connected.  Let $\qpol_2$ denote the set of faces. For any face $F \in \qpol_2$, let $W_F \in \kk Q_{\cyc}$ be the cycle obtained by tracing all arrows around the boundary of $F$. Define the \emph{superpotential} for $\qpol$ to be
$$
W:= \sum_{F \in Q_2} (-1)^F W_F,
$$
where $(-1)^F$ takes value $+1$ on faces oriented anticlockwise, and $-1$ on faces oriented clockwise. We write $A$ for the Jacobian algebra of the quiver $\qpol$ with superpotential $W$.

The Jacobian algebra has a special central element $\omega=\sum_{i\in Q_0} c_i$ where $c_i$ is a clockwise boundary cycle starting at $i$. The relations in $\langle \partial_a W \mid a \in \qpol_1 \rangle$ ensure that $\omega$ is central in $A$ and does not depend on the particular choice of cycles $c_i$. To a dimer model we can also associate its \emph{weak Jacobian algebra} by inverting $\omega$:
\[
\widehat A := A \otimes_{\kk[\omega]}\kk[\omega,\omega^{-1}].
\]
Equivalently, this algebra can be obtained by inverting all the arrows. More precisely, define the double quiver $\widehat \qpol$ to be the quiver obtained from $\qpol$ by introducing an extra arrow $a^{-1}$ in the opposite direction for each $a\in \qpol_1$. A path in the quiver $\widehat \qpol$ is called a \emph{weak path}. Then
\[
\widehat A = \kk \widehat \qpol/\big\langle \partial_a W,\; aa^{-1}-e_{\head(a)}, a^{-1}a-e_{\tail(a)} \mid a \in \qpol_1\big\rangle.
\]
A dimer model is said to be \emph{consistent} if the standard localisation map $A \to \widehat A$ is injective.  

 \begin{remark}
 The literature contains several notions of consistency for dimer models. We work with the cancellation property of Mozgovoy--Reineke~\cite{MozgovoyReineke10} and studied further by Davison~\cite{Davison11}. Bocklandt~\cite{Bocklandt12} showed that the cancellation property is equivalent both to algebraic consistency introduced by Broomhead~\cite{Broomhead12}, and to the existence of a `consistent R-charge' as studied by Kennaway~\cite{Kennaway13}. Moreover, \cite[Lemma~7.4]{Bocklandt12} shows that every such dimer model $\qpol$ carries a perfect matching, in which case Ishii--Ueda~\cite{IshiiUeda10} implies that the cancellation property is equivalent both to $\qpol$ being property ordered in the sense of Gulotta~\cite{Gulotta08} and to the notion of consistency from Ishii--Ueda~\cite[Definition~3.5]{IshiiUeda10}. In short, all of the above notions of consistency are equivalent. 
 \end{remark}

The following result is due to Broomhead~\cite[Lemma~5.6]{Broomhead12}.

\begin{proposition}
\label{prop:broomhead}
For each (algebraically) consistent dimer model $\qpol$, the centre $Z(A)$ of the corresponding Jacobian algebra $A$ is a Gorenstein semigroup algebra.
\end{proposition}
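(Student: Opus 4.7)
The plan is to realise $Z(A)$ as an affine semigroup algebra by using the perfect matchings of $\qpol$ to construct explicit gradings on $A$, and then to extract the Gorenstein property from the fact that all perfect matchings lie on a common affine hyperplane. By Bocklandt's Lemma~7.4 cited in the remark, every consistent dimer model admits at least one perfect matching. Each perfect matching $P$ is a subset of $\qpol_1$ such that every face $F\in \qpol_2$ has exactly one $P$-arrow on its boundary; assigning degree $1$ to arrows in $P$ and $0$ to all others induces an $\NN$-grading on $\kk\qpol$ that descends to $A$, since each summand $W_F$ of $W$ becomes homogeneous of degree $1$ and hence so do the cyclic derivatives $\partial_a W$. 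In particular, the face cycle $c_i$ has degree $1$ under every such $\deg_P$, so the central element $\omega$ is homogeneous of degree $1$ in every perfect-matching grading.

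The next step is to exploit the cancellation property of consistent dimer models in order to encode paths via perfect matchings. Consistency implies that two paths in $\qpol$ with a common head and tail are equal in $A$ if and only if they have identical $P$-degrees for every perfect matching $P$. Collecting these gradings gives an embedding of each $e_jAe_i$ into a lattice $M\subset \ZZ^{\mathrm{pm}(\qpol)}$, and the image of $e_iAe_i$ is the intersection of a translate of a strongly convex rational polyhedral cone $\sigma^\vee\subset M_\RR$ with $M$, where $\sigma^\vee$ is defined by the nonnegativity of the $P$-degrees and is independent of $i$.

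Third, I would identify $Z(A)$ with the semigroup algebra $\kk[\sigma^\vee\cap M]$. An element $z=\sum_i z_i$ with $z_i \in e_iAe_i$ is central precisely when $a\,z_i = z_j\,a$ for every arrow $a\colon i\to j$, and under the previous identification this reduces to the statement that all $z_i$ correspond to a single common lattice element $m\in \sigma^\vee\cap M$. This yields a family of central elements $\{z_m : m\in \sigma^\vee\cap M\}$ satisfying $z_m z_{m'}=z_{m+m'}$ and forming a $\kk$-basis of $Z(A)$, proving $Z(A) \cong \kk[\sigma^\vee\cap M]$ as semigroup algebras. I expect this identification to be the main obstacle: one must use cancellation delicately to show both that the putative central elements actually exist (the local pieces $z_i$ of the right degree glue consistently across the quiver) and that centrality is detected exactly by equality of $P$-degrees.

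Finally, for the Gorenstein property I would invoke the standard criterion for a normal affine semigroup algebra $\kk[\sigma^\vee\cap M]$: it is Gorenstein whenever some lattice element $u\in M$ pairs to $1$ with every primitive ray generator of the dual cone $\sigma$, so that those primitive rays lie on a common affine hyperplane at height one. In our setting the rays of $\sigma$ are represented by (extremal) perfect matchings, and $\deg_P(\omega)=1$ for every perfect matching $P$; hence the class of $\omega$ in $M$ supplies exactly this distinguished element and $Z(A)$ is a Gorenstein semigroup algebra as required.
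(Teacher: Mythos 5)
The paper offers no proof of this proposition: it is quoted directly from Broomhead (Lemma~5.6 of \emph{Dimer models and Calabi--Yau algebras}), so the only meaningful comparison is with that source, and your sketch reconstructs essentially Broomhead's own argument --- grade $A$ by perfect matchings, identify $e_iAe_i$ with the lattice points of a cone, characterise centrality as agreement of all the local degrees, and extract the Gorenstein property from the fact that $\deg_\Pi(\omega)=1$ for every perfect matching $\Pi$, i.e.\ that the primitive ray generators $n_\Pi$ of $\sigma$ lie on a height-one hyperplane. The outline is correct, and it matches the framework the paper itself sets up in Section~2.5. The one place where you understate the difficulty is the assertion that ``consistency implies that two paths with a common head and tail are equal in $A$ if and only if they have identical $P$-degrees for every perfect matching,'' together with the companion claim that every admissible lattice degree is realised by an actual path: this pair of statements is precisely Broomhead's notion of \emph{algebraic consistency}, and its equivalence with the cancellation property that this paper takes as the definition of consistency is a theorem of Bocklandt, quoted in the remark immediately preceding the proposition --- it is not something that follows ``delicately'' from cancellation in a few lines. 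Provided you invoke that equivalence rather than attempt to rederive it, the remaining steps (connectedness of $\qpol$ forcing a single common degree $m$ for a central element, multiplicativity $z_mz_{m'}=z_{m+m'}$, saturatedness of $\sigma^\vee\cap M$, and the standard toric Gorenstein criterion applied with the distinguished element $[\omega]\in M$ pairing to $1$ with each $n_\Pi$) all go through.
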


\subsection{Moduli of quiver representations}
A \emph{representation} $V$ of a quiver $\qpol$ is a collection $\{V_i \mid i \in \qpol_0 \}$ of finite dimensional $\kk$-vector spaces and a collection $\{v_a \colon V_{\tail(a)} \to V_{\head(a)} \mid a \in \qpol_1\}$ of $\kk$-linear maps. The dimension vector of a representation $V$ is the vector $\dim(V) \in \ZZ^{\qpol_0}$ whose $i$th component is $\dim (V_i)$ for $i\in \qpol_0$. It is convenient to write a representation $V$ of dimension vector $\underline{1}:=(1,1,\dots, 1)$ simply as the tuple of scalars $V=(v_a)_{a\in \qpol_1}$. We also write $v_p:=v_{a_1}\dots v_{a_n}$ for a path $p=a_1\dots a_n$ and $v_{\sum \lambda_i p_i}:=\sum \lambda_i v_{p_i}$ for a linear combination of paths with the same head and tail.

If $V, V^\prime$ are two representations of $\qpol$, then a morphism $f \colon V \to V^\prime$ is a collection of $\kk$-linear maps $\{f_i \colon V_i \to V^\prime_i \mid i \in \qpol_0 \}$ such that $v^\prime_a f_{\tail(a)} = f_{\head(a)}v_a$ for all $a \in \qpol_1$. In the presence of relations, a representation of the pair $(\qpol,\rho)$ is a representation $V$ of $\qpol$ such that for each relation $r \in \rho$, the corresponding linear combination of $\kk$-linear maps from $V_{\tail(r)}$ to $V_{\head(r)}$ is zero. The abelian category of representations of $(\qpol,\rho)$ is equivalent to the category of finite-dimensional left modules over the quotient algebra $A= \kk \qpol/ \langle \rho \rangle$, and we let $\modA$ denote this category. Each vertex $i \in \qpol_0$ defines a simple object $S_i=\kk e_i$ in $\modA$ called the \emph{vertex simple} for $i\in \qpol_0$; as a representation of $\qpol$, this has $V_i=\kk$ and $V_j=0$ for $j\neq i$, where the maps $f_a$ are zero for all $a\in \qpol_1$. In general these are not the only simple objects of $\modA$.

Consider the rational vector space
\[
\Theta = \Big\{\theta \in \Hom\big(\ZZ^{\qpol_0},\QQ\big) \mid \textstyle{\sum_{i\in \qpol_0} \theta_i = 0}\Big\}.
\]
Given a representation $V$ of $\qpol$, define $\theta(V):=\theta(\dim V)$ for $\theta\in \Theta$. A representation $V$ of dimension vector $\underline{1}=(1,1,\dots, 1)$ is \emph{$\theta$-semistable} if every subrepresentation $V^\prime\subset V$ satisfies $\theta(V^\prime)\geq 0$, and it is \emph{$\theta$-stable} if $\theta(V^\prime) > 0$ for every nonzero, proper subrepresentation $V^\prime\subset V$. In the presence of relations and for the algebra $A= \kk \qpol/ \langle \rho \rangle$, the equivalence of abelian categories from above gives us a notion of $\theta$-stability for $A$-modules.  We say that $\theta\in \Theta$ is \emph{generic} if every $\theta$-semistable representation is $\theta$-stable. For $\theta\in \Theta$, King~\cite{King94} constructs the coarse moduli space $\overline{\mathcal{M}_\theta}$ of S-equivalence classes of $\theta$-semistable $A$-modules of dimension vector $\underline{1}$ using geometric invariant theory. Our primary interest lies 
with generic $\theta\in \Theta$, in which case $\overline{\mathcal{M}_\theta}$ coincides with the fine moduli space $\mathcal{M}_\theta$ of isomorphism classes of $\theta$-stable $A$-modules of dimension vector $\underline{1}$.  

Since $\mathcal{M}_\theta$ represents a functor, it carries a universal family of $\theta$-stable $A$-modules of dimension vector $\underline{1}$, namely a tautological locally free sheaf 
\[
 T=\bigoplus_{i\in \qpol_0} L_i,
\] 
where $L_i$ has rank one for all $i\in \qpol_0$, together with a tautological $\kk$-algebra homomorphism 
\[
\phi\colon A\rightarrow \End_{\mathscr{O}_{\mathcal{M}_\theta}}(T).
\]
For each closed point $y\in \mathcal{M}_\theta$, let $V_y$ denote the $\theta$-stable $A$-module obtained by restricting the tautological maps $\phi(a)\colon L_{\tail(a)}\to L_{\head(a)}$ for $a\in \qpol_1$ to the fibre of the tautological bundle $T$ over the point $y\in \mathcal{M}_\theta$. As King~\cite[Proposition~5.3]{King94} notes, there is an inherent ambiguity in the construction of $T$. To remove this ambiguity, we distinguish once and for all a vertex of the quiver that we denote $0 \in Q_0$, and we normalise the tautological bundle by fixing $L_0\cong \mathscr{O}_{\mathcal{M}_\theta}$.

\subsection{Crepant resolutions and tilting bundles}
\label{Sect:3.1}
In a series of papers, Ishii--Ueda~\cite{IshiiUeda08, IshiiUeda09,IshiiUeda13} study certain fine moduli spaces $\mathcal{M}_\theta$ associated to the Jacobian algebra $A$ defined by a consistent dimer model $\qpol$. Here we recall some of their main results.

We begin with the main geometric result of \cite{IshiiUeda08}; our statement of Theorem~\ref{thm:ishiiueda08} differs slightly from that in the original paper, see Remark~\ref{rem:ishiiueda08} below. 

 \begin{theorem}[Ishii--Ueda~\cite{IshiiUeda08}]
 \label{thm:ishiiueda08}
 Let $A$ be the Jacobian algebra of a consistent dimer model and write $X=\Spec Z(A)$ for the Gorenstein toric variety of dimension three (see Proposition~\ref{prop:broomhead}). For $\theta\in \Theta$ generic, the toric variety $\mathcal{M}_\theta$ is a crepant resolution of $X$.
 \end{theorem}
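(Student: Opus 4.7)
The plan has four parts: I would construct $\mathcal{M}_\theta$ as a toric variety with a projective morphism $\tau\colon \mathcal{M}_\theta\to X$, check that $\tau$ is birational, prove that $\mathcal{M}_\theta$ is smooth, and finally verify that $\tau$ is crepant. The toric structure is immediate: the algebra $A$ carries a grading by the character lattice $M$ of the three-dimensional torus acting on $X$, induced by the perfect matchings of $\qpol$ (which exist by consistency, as noted before Proposition~\ref{prop:broomhead}). Combined with the gauge action of $(\kk^*)^{\qpol_0}/\kk^*$ on the representation space for dimension vector $\underline 1$, this realises $\mathcal{M}_\theta$ as a GIT quotient of an affine space by a torus, hence as a toric variety, and the composition of the inclusion $Z(A)\hookrightarrow A$ with the tautological map $\phi\colon A\to \End_{\Ocal_{\mathcal{M}_\theta}}(T)$ produces the projective morphism $\tau$. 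Birationality is then easy: over the dense torus in $X$, every $\theta$-stable representation has $v_a\neq 0$ for all $a\in \qpol_1$, and such representations form a single orbit under the gauge torus.

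The core of the argument is smoothness. I would set up a bijection between the torus-fixed points of $\mathcal{M}_\theta$ and the perfect matchings of $\qpol$: in the $\theta$-stable representation associated to a perfect matching $\pi$, the arrows $a$ with $v_a=0$ are exactly those lying in $\pi$. Around such a fixed point, the arrows $a\notin \pi$ serve as local coordinates on $\mathcal{M}_\theta$, and one must show that the relations $\partial_a W = 0$ reduce these to a polynomial ring in three generators, identifying the chart with $\AA^3$. The main obstacle is this local calculation, which uses consistency decisively: via the cancellation property one can pass to the weak Jacobian algebra $\widehat A$ and re-express products of arrows in terms of weak paths ending at the chosen vertex $0$, and the three surviving coordinates then correspond to the three minimal weak loops at $0$ that are not killed by $\pi$. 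Gluing these affine charts as $\pi$ varies over all perfect matchings assembles the toric fan of a smooth three-dimensional variety resolving $X$.

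It remains to prove crepancy. The conceptual route is via the tautological bundle: consistency yields an isomorphism $A\cong \End_{\Ocal_{\mathcal{M}_\theta}}(T)$, so $T$ is a tilting bundle on $\mathcal{M}_\theta$ and the functor $\Psi$ from \eqref{eqn:introPsi} is an equivalence of bounded derived categories. Since $A$ is $3$-Calabi-Yau (a further consequence of consistency), Serre duality on $D^b(\coh(\mathcal{M}_\theta))$ must be identified under $\Psi$ with the trivial-shift Serre functor on $D^b(\modA)$; a Bridgeland--King--Reid style argument then forces $\omega_{\mathcal{M}_\theta}\cong \tau^*\omega_X$, i.e.\ $\tau$ is crepant. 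Alternatively, crepancy can be verified chart by chart: each perfect-matching chart is affine toric, and crepancy reduces to the combinatorial observation that every ray generator of the fan of $\mathcal{M}_\theta$ lies on the Gorenstein affine hyperplane in $N_\RR$ defining the cone of $X$, which is again ensured by consistency.
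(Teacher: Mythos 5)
The paper does not actually prove this theorem: it is imported from Ishii--Ueda \cite{IshiiUeda08}, and the only original content surrounding it is Remark~\ref{rem:ishiiueda08}, which reconciles the statement above with the one proved in that reference. Judged as a reconstruction of the Ishii--Ueda argument, your proposal has two genuine problems.

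First, the combinatorial dictionary at the heart of your smoothness step is wrong. Torus-fixed points of $\mathcal{M}_\theta$ do not correspond to single perfect matchings: by Lemma~\ref{lem:fan} they correspond to three-dimensional cones of $\Sigma$, i.e.\ to elementary triangles in the triangulation of the lattice polygon $P$, and the cosupport of the $\theta$-stable representation at such a point is the \emph{union} $\Pi_1\cup\Pi_2\cup\Pi_3$ of the three $\theta$-stable perfect matchings at the vertices of that triangle. A single perfect matching gives a ray of $\Sigma$, hence a torus-invariant divisor, not a fixed point. Your affine charts are therefore indexed by the wrong set, and the ``three surviving coordinates'' must be read off from a compatible triple of matchings (dual to the three rays of the cone), not from one $\pi$; as written, the local calculation you flag as the main obstacle cannot even be set up.

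Second, you dismiss birationality onto $X=\Spec Z(A)$ as easy, but this is precisely the delicate point that Remark~\ref{rem:ishiiueda08} exists to address. Ishii--Ueda prove that $\mathcal{M}_\theta$ resolves the normalisation of the closure $X^\prime$ of a three-torus in $\overline{\mathcal{M}_0}$; identifying $X^\prime$ with $\Spec Z(A)$, and knowing it is normal, requires Broomhead's algebraic consistency together with the Craw--Quintero-V\'{e}lez GIT construction of the coherent component $Y_\theta$. Relatedly, $\mathcal{M}_\theta$ is a quotient of the subscheme of the representation space cut out by $\partial_a W=0$, not of an affine space, and showing the moduli space coincides with its toric component is part of the work. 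Finally, your first route to crepancy is logically backwards in this development: the isomorphism $A\cong\End_{\mathscr{O}_{\mathcal{M}_\theta}}(T)$ and the tilting property are the \emph{later} results (Theorem~\ref{thm:ishiiueda09}, from \cite{IshiiUeda09,IshiiUeda13}), proved after the resolution is in hand. Your second route --- every ray generator $n_\Pi$ lies on the height-one hyperplane $\langle n,z\rangle=1$, so the toric resolution is crepant --- is correct and is the argument to keep.
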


\begin{remark}
\label{rem:ishiiueda08}
Ishii--Ueda~\cite{IshiiUeda08} assume only that the dimer model is nondegenerate in order to deduce that $\mathcal{M}_\theta$ is a crepant resolution for generic $\theta\in \Theta$ (consistent dimer models are shown to be nondegenerate in \cite[Proposition~7.1]{IshiiUeda09}). In fact, they prove that $\mathcal{M}_\theta$ is a crepant resolution of (the normalisation of) the closure $X^\prime$ of a three-torus $(\kk^*)^3$ in $\overline{\mathcal{M}_0}$. Craw--Quintero-V\'{e}lez~\cite{CrawQuinterovelez12} subsequently gave a GIT construction of a toric subvariety $Y_\theta\subseteq \overline{\mathcal{M}_\theta}$ for arbitrary $\theta\in \Theta$ such that $Y_\theta = \mathcal{M}_\theta$ for generic $\theta\in \Theta$, and $X\cong Y_0\cong X^\prime$. In particular, $X^\prime$ is normal, and for generic $\theta\in \Theta$ we obtain a commutative diagram
 \[
 \begin{CD}   
  Y_\theta @= \mathcal{M}_\theta \\
     @V{\tau_\theta}VV   @VVV    \\
X @>>> \overline{\mathcal{M}_0} \\
 \end{CD}
 \]
 where the horizontal map is a closed immersion and the vertical maps are projective morphisms obtained by variation of GIT quotient. As a result, the morphism $\tau\colon \mathcal{M}_\theta \to X^\prime$ from \cite{IshiiUeda08} coincides with the morphism $\tau_\theta\colon Y_\theta\to X$ obtained by variation of GIT quotient. 
\end{remark}

Next we recall two important results that link properties of $A$ with the geometry of $\Mcal_{\theta}$.

 \begin{theorem}[Ishii--Ueda~\cite{IshiiUeda09,IshiiUeda13}]
 \label{thm:ishiiueda09}
 Let $A$ be the Jacobian algebra of a consistent dimer model. For $\theta\in \Theta$ generic, write $\tau\colon \mathcal{M}_\theta\to X=\Spec Z(A)$ for the crepant resolution and $T$ for the tautological bundle on $\Mcal_{\theta}$.  Then:
   \begin{enumerate}
 \item[\one] the tautological $\kk$-algebra homomorphism $\phi\colon A\rightarrow \End_{\mathscr{O}_{\mathcal{M}_\theta}}(T)$ is an isomorphism; and
 \item[\two] the bundle $T$ on $\Mcal_{\theta}$ is a tilting bundle.
 \end{enumerate}
 \end{theorem}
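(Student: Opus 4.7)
The plan is to treat the two parts in turn, exploiting the cancellation property defining consistency and the toric structure of $\Mcal_\theta$. For \one, I would analyse $\phi$ componentwise as $\phi_{ji}\colon e_j A e_i \to \Hom_{\mathscr{O}_{\Mcal_\theta}}(L_i,L_j)$. For injectivity, consistency provides an embedding $A \hookrightarrow \widehat A$, so two distinct elements of $e_j A e_i$ remain distinct as weak paths and act nontrivially on the generic $\theta$-stable representation, at which all tautological scalars $\phi(a)$ are nonzero; the resulting morphisms of line bundles must therefore differ. For surjectivity, I would cover $\Mcal_\theta$ by the torus-invariant affine opens $U_\sigma$ associated with maximal cones of the fan. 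On each $U_\sigma$, the equivariant line bundles $L_i$ reduce $\Hom(L_i,L_j)\vert_{U_\sigma}$ to a rank-one free module over the semigroup algebra of $\sigma^\vee \cap M$, so surjectivity becomes the problem of lifting each distinguished character to an honest path from $i$ to $j$. Here I would invoke the perfect matching dictionary (Broomhead and Ishii--Ueda) together with the GIT description of toric charts from \cite{CrawQuinterovelez12}, which associates a perfect matching to each maximal cone and realises every relevant character as the evaluation of a path.

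For \two, I must verify $\Ext^k_{\Mcal_\theta}(T,T)=0$ for $k>0$ together with classical generation of $D^b(\coh \Mcal_\theta)$ by $T$. My strategy would be to construct a resolution of the diagonal: a bounded complex of locally free sheaves on $\Mcal_\theta \times \Mcal_\theta$ whose terms are direct sums of $L_i \boxtimes L_j^\vee$ and whose only cohomology sheaf is $\Delta_* \mathscr{O}_{\Mcal_\theta}$. The algebraic model is the standard length-three projective bimodule resolution of $A$ arising from its presentation as a $3$-Calabi-Yau Jacobian algebra, whose differentials are assembled from the arrows and the cyclic derivatives of $W$. Applying $\phi \otimes \phi^{\op}$ vertex-by-vertex and using \one, this complex sheafifies to a candidate resolution of the diagonal. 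Generation of $D^b(\coh \Mcal_\theta)$ by $T$ then follows by the usual convolution argument of Beilinson type, while Ext-vanishing follows by pushing forward along the two projections and combining crepancy of $\tau$ with the vanishing of higher direct images for torus-equivariant line bundles on the fibres of $\tau$.

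I expect the main obstacle to be showing that the globalised bimodule resolution is exact on $\Mcal_\theta \times \Mcal_\theta$ away from the diagonal. The algebraic resolution is exact because $A$ is $3$-Calabi-Yau, but its sheafification can in principle fail along strata where certain arrows degenerate to zero. To control this I would analyse the complex on each product of torus-invariant affine charts $U_\sigma \times U_{\sigma'}$ and compare it with the natural Koszul complex on the characters defining $\Delta_* \mathscr{O}_{\Mcal_\theta}$ there. This is precisely the step in which consistency is indispensable: the cancellation property ensures that no relations among paths exist beyond those forced by the cyclic derivatives of $W$, so no spurious coincidences arise to break exactness on the boundary strata. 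Without consistency, one would either lose acyclicity of the candidate resolution, or already lose the identification $A \cong \End(T)$ from \one, so this step controls both parts simultaneously.
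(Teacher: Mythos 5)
First, a point of comparison: the paper offers no proof of this statement. Theorem~\ref{thm:ishiiueda09} is imported from Ishii--Ueda \cite{IshiiUeda09,IshiiUeda13} and used as a black box (the text explicitly says ``we recall two important results''), so there is no in-paper argument to measure yours against. I will therefore assess the sketch on its own terms.

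For part \one\ your outline has the right shape. Two caveats. Injectivity: a nonzero element of $e_jAe_i$ is a \emph{linear combination} of paths, and the fact that every tautological scalar is nonzero at a generic point does not by itself prevent the combination from vanishing there. The correct mechanism is that consistency identifies $e_j\widehat{A}e_i$ with a rank-one free module over $e_0\widehat{A}e_0\cong\kk[M]$, so a nonzero element becomes a nonzero Laurent polynomial and hence a nonzero function on the dense torus. Surjectivity: the weak-path description of sections on a chart is essentially the paper's Lemma~\ref{lem:widehat}, but the real content --- that every weak path with nonnegative degree against the relevant perfect matchings is equivalent in $\widehat{A}$ to an honest path of $A$ --- is Broomhead's algebraic consistency (equivalent to cancellation by \cite{Bocklandt12}); you gesture at it, but that is where all the work lives and it cannot be dismissed as a ``dictionary.''

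For part \two\ there are genuine gaps. Exactness of the sheafified bimodule complex away from the diagonal on $\Mcal_\theta\times\Mcal_\theta$ is at least as hard as the theorem itself, and ``consistency rules out spurious coincidences'' is not an argument: the potential failure locus is the union of boundary strata where arrows degenerate, and controlling the complex there requires exactly the kind of stratum-by-stratum cohomology computation that occupies a separate paper (\cite{CrawQuinteroVelez12b}) already in the much smaller case of the complex $\Psi(S_i)$ for a single vertex. Worse, your proposed Ext-vanishing ``by pushing forward along the projections and using vanishing of higher direct images for torus-equivariant line bundles on the fibres of $\tau$'' does not work as stated: $\Ext^k(L_i,L_j)=H^k(L_j\otimes L_i^{-1})$ is precisely what must be proved, and $L_j\otimes L_i^{-1}$ is in general neither nef nor anti-nef on the (possibly two-dimensional) fibres of $\tau$, so no general equivariant vanishing applies. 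The proofs in the literature take a different route: either verify the Bridgeland--King--Reid intersection-theorem criterion for the crepant morphism $\tau\colon\Mcal_\theta\to X$ (using the fibre-dimension bound and crepancy from Theorem~\ref{thm:ishiiueda08}), or, as Ishii--Ueda do, establish the equivalence in one chamber and propagate it to all chambers by wall-crossing and induction on the lattice polygon. If you want a self-contained argument you should follow one of those routes rather than a resolution of the diagonal.
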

 
 \begin{remarks}
\label{rem:ishiiueda09}
\begin{enumerate}
 \item For $i, j\in \qpol_0$, the space $e_j A e_i$ is spanned by classes of paths in $\qpol$ from $i$ to $j$. Under the isomorphism from Theorem~\ref{thm:ishiiueda09}\one, the image of $e_jAe_i$ is $\Hom(L_i,L_j)$ which, by an unfortunate consequence of the notation for $\Hom$ spaces, is $e_j \End(T)e_i$. Thus, $H^0(L_j)$ is isomorphic to the space $e_j A e_0$ of classes of paths in $\qpol$ from 0 to $j$. This also implies that the centre of $A$ is
 \begin{equation}
 \label{eqn:centre}
 Z(A)\cong e_jA e_j\cong\End_{\mathscr{O}_{\mathcal{M}_\theta}}(L_j)\cong \kk[X]
 \end{equation}
for each $j\in \qpol_0$. These observations are generalised in Lemma~\ref{lem:widehat}.
\item Let $F\in \qpol_2$ be a face and let $i\in \qpol_0$ be any vertex through which the cycle $W_F\in \kk\qpol_{\cyc}$ passes. The zero-locus of the map $\phi(W_F)\colon L_i\to L_i$ is the union of all torus-invariant divisors in $\mathcal{M}_\theta$, each with multiplicity one, see \cite[Lemma~4.1]{IshiiUeda09}.  
\item That $T$ is a tilting bundle means in particular that the functors 
 $$
\mathbf{R}\!\Hom_{\mathscr{O}_{\Mcal_{\theta}}}(T,-)\colon D^b(\coh(\mathcal{M}_{\theta})) \longrightarrow D^b(\modAop)
$$
and
 \[
-\ltensor_{A}T \colon D^b(\modAop) \longrightarrow D^b(\coh(Y))
\]
provide mutually inverse equivalences between the bounded derived category of coherent sheaves on $\mathcal{M}_{\theta}$ and the bounded derived category of finite-dimensional left $A^{\mathrm{op}}$-modules. 
\end{enumerate}
\end{remarks}
    
\begin{example}
\label{exa:McKay}
Let $\qpol$ denote the McKay quiver of a finite abelian subgroup $G\subset \SL(3,\CC)$, and let $A=\CC[x,y,z]*G$ be the skew group algebra. The vertex set $\qpol_0$ is the set of isomorphism classes of irreducible representations of $G$. If we choose the zero vertex to be the trivial representation, then Ito--Nakajima~\cite{ItoNakajima00} observed that the fine moduli space $\mathcal{M}_\vartheta$ of $\vartheta$-stable $A$-modules coincides with Nakamura's $G$-Hilbert scheme for any $\vartheta\in \Theta$ satisfying $\vartheta_i>0$ for $i\neq 0$. 
\end{example}

\subsection{An explicit description of $\mathcal M_\theta$ and $T$}
The geometry of $\mathcal{M}_\theta$ is determined by the combinatorial data encoded by its toric fan. To construct this fan directly from the dimer we first need a pair of dual lattices $M,N$. In our case we take for $M$ the set of all classes of weak paths in $e_0\widehat A e_0$. If we let $x,y$ denote two weak paths that span the homology of the torus and let $z=e_0\omega e_0$,  then we can identify $M$ with $\ZZ x\oplus \ZZ y\oplus \ZZ z$ and $e_0\widehat A e_0$ with $\kk[M]$. The dual lattice $N$ may be regarded as the lattice of all possible $\ZZ$-gradings on $M$.

Recall that a {\em perfect matching} in $\qpol$ is a subset $\Pi\subseteq \qpol_1$ such that each face in $\qpol_2$ contains exactly one arrow from $\Pi$ on its boundary. To a perfect matching $\Pi$ we can associate a $\ZZ$-grading $\deg_\Pi$ on $A$ (and $\hat A$) determined by 
\[
\forall a \in \qpol_1:\deg_{\Pi} a := \begin{cases}
1 &a\in \Pi\\
0 & a \notin \Pi
\end{cases}
\]
In this way every perfect matching $\Pi$ defines an element in $N$, namely
\[
n_\Pi: M \to \ZZ : p \mapsto \deg_\Pi p.
\]
Note that different perfect matchings can give the same element. Since $z$ is a boundary cycle, each lattice point of the form $n_\Pi$ lies in the affine plane $\{n\in N \otimes_\ZZ\RR \mid \langle n,z\rangle=1\}$. The convex hull of these points in $N \otimes_\ZZ\RR$ is a lattice polygon $P$, and the cone $\sigma$ over this polygon defines the Gorenstein toric variety $X=\Spec \kk[\sigma^\vee\cap M]$.

To describe the fan $\Sigma$ of $\mathcal{M}_\theta$ in terms of perfect matchings, define the cosupport of a representation $V=(v_a)_{a\in \qpol_1}$ to be the set $\{ a \in \qpol_1 \mid  v_a=0\}$ of arrows on which $V$ is zero. A perfect matching is called $\theta$-stable if it is the cosupport of a $\theta$-stable representation. The next result is a rephrasing of Ishii--Ueda \cite[Lemmas 6.1 and 6.2]{IshiiUeda08} (see also Mozgovoy~\cite[Proposition 4.15]{Mozgovoy09}).

\begin{lemma}
\label{lem:fan}
Let $\theta\in \Theta$ be generic. For $1\leq r\leq 3$, the set of $r$-dimensional cones in the fan $\Sigma$ of $\mathcal{M}_\theta$ is precisely the set of cones of the form $\sum_{i=1}^r \RR^+ n_{\Pi_i}$, where $\Pi_1,\dots, \Pi_r$ are $\theta$-stable perfect matchings such that $\Pi_1\cup \dots \cup \Pi_r$ is the cosupport of a $\theta$-stable representation.
\end{lemma}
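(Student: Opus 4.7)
The plan is to apply the orbit-cone correspondence for toric varieties: the $r$-dimensional cones in $\Sigma$ are in bijection with the irreducible torus-invariant closed subvarieties of $\mathcal{M}_\theta$ of codimension $r$. My strategy is to identify each such subvariety with a locus of $\theta$-stable representations on which a prescribed collection of arrows is forced to vanish, and then match this combinatorial datum with cones of the form $\sum \RR^+ n_{\Pi_i}$. Since the statement is essentially a translation of Ishii--Ueda~\cite[Lemmas 6.1, 6.2]{IshiiUeda08}, the work is really in assembling their combinatorics with the orbit-cone dictionary.

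First I would treat the rays ($r=1$). By Remark~\ref{rem:ishiiueda09}(2), the zero locus of $\phi(a)\colon L_{\tail(a)}\to L_{\head(a)}$ decomposes as a sum of torus-invariant divisors with multiplicity one, and $a$ vanishes on the divisor corresponding to a ray $\RR^+ n$ precisely when $n$ pairs nontrivially with the class of $a$ in $M$. The set of arrows vanishing generically on such a divisor is therefore a perfect matching $\Pi$, and since the generic point of the divisor is a $\theta$-stable representation, the matching $\Pi$ is $\theta$-stable. Conversely, any $\theta$-stable perfect matching $\Pi$ produces a whole family of $\theta$-stable representations with cosupport containing $\Pi$, which sweep out a torus-invariant divisor whose primitive lattice point equals $n_\Pi$ by the $\ZZ$-grading computation.

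Next I would treat the maximal cones ($r=3$). A closed point of $\mathcal{M}_\theta$ is torus-fixed if and only if the corresponding $\theta$-stable representation $V$ admits, for each element of the three-torus $T_N$, a gauge transformation in $(\kk^*)^{\qpol_0}/\kk^*$ rescaling $V$ back to itself. A standard argument shows that this forces the cosupport of $V$ to be a union of three $\theta$-stable perfect matchings $\Pi_1,\Pi_2,\Pi_3$, namely the three matchings of Lemma~\ref{lem:fan} indexing the divisors through that fixed point. This is where the real content resides, and where I will invoke \cite[Lemma 6.2]{IshiiUeda08}. Conversely, given three $\theta$-stable perfect matchings whose union is the cosupport of some $\theta$-stable representation $V$, the point $[V]\in \mathcal{M}_\theta$ is torus-fixed and lies in the intersection of the three corresponding divisors, so it is cut out by the maximal cone $\sum_{i=1}^{3}\RR^+ n_{\Pi_i}$. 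The case $r=2$ then follows: every $2$-cone is a face of a maximal cone, so it is spanned by two of the $n_{\Pi_i}$, and the corresponding torus-invariant curve contains a dense set of $\theta$-stable representations whose cosupport is exactly $\Pi_1\cup \Pi_2$.

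The main obstacle is the combinatorial step for maximal cones: showing that the cosupport of a torus-fixed $\theta$-stable representation decomposes into exactly three $\theta$-stable perfect matchings. This relies on consistency of $\qpol$ (and thereby on the existence of sufficiently many perfect matchings, \cite[Lemma 7.4]{Bocklandt12}) to guarantee that the vanishing arrows can be partitioned as claimed, and on $\theta$-stability to rule out representations whose cosupport is either too small (not torus-fixed) or too large (forcing a destabilising subrepresentation). Once this is established, the remaining assertions — that rays are spanned by $n_\Pi$ for $\theta$-stable $\Pi$, that 2-cones come from compatible pairs, and that everything fits into a fan — follow formally from the orbit-cone correspondence.
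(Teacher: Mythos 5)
The paper gives no proof of this lemma at all: it is introduced with the sentence that it ``is a rephrasing of Ishii--Ueda [Lemmas 6.1 and 6.2]'', which is precisely where you also locate the essential content (the decomposition of the cosupport of a torus-fixed $\theta$-stable representation into three $\theta$-stable perfect matchings). Your surrounding scaffolding --- the orbit-cone dictionary, the identification of the arrows vanishing on a torus-invariant divisor with a perfect matching via Remark~\ref{rem:ishiiueda09}(2), and the reduction of the $2$-cone case to faces of maximal cones --- is correct and amounts to exactly the ``rephrasing'' the authors allude to, so your approach is essentially the same as the paper's.
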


\begin{remarks}\label{rem:cosupp}
\begin{enumerate}
\item Let $\Sigma(1)$ denote the set of one-dimensional cones in the fan $\Sigma$ of $\mathcal{M}_\theta$. For $\rho \in \Sigma(1)$, we write $\Pi_\rho$ for the unique $\theta$-stable perfect matching corresponding to $\rho$, and $\deg_\rho := \deg_{\Pi_\rho}$ for the associated the degree function. 
\item Intersecting the fan $\Sigma$ with the polygon $P$ gives a regular triangulation of $P$ (see Figure~\ref{fig:Sigma} for an example). The $3$-dimensional cones give elementary triangles, the $2$-dimensional cones give line segments and the rays give lattice points. In particular, each lattice point in the triangulation of $P$ corresponds to a unique $\theta$-stable perfect matching.
\item For $Y:=\mathcal{M}_\theta$, each cone $\sigma\in \Sigma$ defines both a torus orbit and an open chart in $Y$, namely $Y_\sigma= \Hom_{sg}(\sigma^\perp,\kk^*)\cong (\kk^*)^{3 - \dim \sigma}$ and $U_\sigma= \Hom_{sg}(\sigma^\vee,\kk)$ in $\mathcal{M}_\theta$ respectively (here $\Hom_{sg}$ denotes semigroup homomorphisms). In our case these definitions become
\begin{eqnarray*}
\kk[Y_\sigma] &=& \Span_\kk \{ p \in e_0\widehat A e_0 \mid \deg_\rho p=0 \;\forall \rho \subset \sigma\},\\
\kk[U_\sigma] &=& \Span_\kk \{ p \in e_0\widehat A e_0 \mid \deg_\rho p\ge 0 \;\forall \rho \subset \sigma\}.
\end{eqnarray*}
In view of this it makes sense to define
\[
\widehat A_\sigma := \Span_\kk \{p \in \widehat A\mid \forall n_\Pi \in \sigma:\deg_\Pi(p)\ge 0\}.
\]
With this notation we have $\kk[U_\sigma] = e_0 \widehat A_\sigma e_0$.
\item Let $\sigma\in \Sigma$ be a cone of dimension $r$ generated by the rays through $\{n_{\Pi_i}\in N \mid 1\leq i\leq r\}$, and write $\cosup_\sigma:= \Pi_1\cup\dots\cup \Pi_r$. Then
$Y_\sigma$ parametrises the set of all $\theta$-stable representations 
with cosupport equal to $\cosup_\sigma$, while $U_\sigma$ parametrises the set of all $\theta$-stable representations with cosupport contained in $\cosup_\sigma$.
\end{enumerate}
\end{remarks}

To describe the tautological bundle $T=\bigoplus_{i\in \qpol_0} L_i$ on $Y=\mathcal{M}_\theta$, recall that the tautological $\kk$-algebra isomorphism $\phi\colon A\rightarrow \End_{\mathscr{O}_{\mathcal{M}_\theta}}(T)$ from Theorem~\ref{thm:ishiiueda09}\one\ associates to each path $p$ in $\qpol$ a section $D_p\in H^0\big(L_{\head(p)}\otimes L^{-1}_{\tail(p)}\big)$. Extend this assignment to every path in the double quiver $\widehat{\qpol}$ by setting $D_{a^{-1}}:=-D_a$, so that for any weak path $p$ in $\qpol$, the associated divisor is $D_p\in H^0(L_{\head(p)}\otimes L^{-1}_{\tail(p)})$. Since $L_0\cong \mathscr{O}_Y$,  the following result is immediate:

\begin{lemma}
The tautological bundle $T=\bigoplus_{i\in \qpol_0} L_i$ satisfies $L_j \cong \mathscr{O}_Y(D_{p})$ for each $j\in \qpol_0$, where $p$ is any weak path from vertex $0$ to vertex $j$.
\end{lemma}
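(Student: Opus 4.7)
My plan is to reduce to the case of a single arrow and then assemble weak paths by telescoping, using the tautological algebra isomorphism from Theorem~\ref{thm:ishiiueda09}\one\ together with the normalisation $L_0\cong \mathscr{O}_Y$.

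First I would handle a single arrow. If $a\in \qpol_1$ is an arrow from $i$ to $j$, then $\phi(a)$ is a global section of the line bundle $L_j\otimes L_i^{-1}$. Since the dimer model is consistent, the localisation map $A\to \widehat A$ is injective, so each arrow is nonzero in $A$; combined with the isomorphism $\phi\colon A\xrightarrow{\sim}\End_{\mathscr{O}_Y}(T)$ of Theorem~\ref{thm:ishiiueda09}\one, this forces $\phi(a)$ to be a nonzero section. Its zero locus is, by definition, the effective Cartier divisor $D_a$, and standard line-bundle theory then gives a canonical isomorphism
\[
L_j\otimes L_i^{-1}\;\cong\;\mathscr{O}_Y(D_a).
\]
Tensoring both sides with $L_i\otimes L_j^{-1}$ and using the convention $D_{a^{-1}}:=-D_a$ yields the analogous identity
\[
L_{\tail(a)}\otimes L_{\head(a)}^{-1}\;\cong\;\mathscr{O}_Y(D_{a^{-1}})
\]
for the reversed arrow, so the statement of the lemma for weak paths of length one is immediate regardless of the orientation.

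Next I would extend this to arbitrary weak paths by induction on length. Let $p = b_k\cdots b_1$ be a weak path from $0$ to $j$, where each $b_\ell\in \qpol_1\cup \qpol_1^{-1}$. Setting $i_0=0$, $i_k=j$, and $i_\ell=\head(b_\ell)=\tail(b_{\ell+1})$, the length-one case gives
\[
L_{i_\ell}\otimes L_{i_{\ell-1}}^{-1}\;\cong\;\mathscr{O}_Y(D_{b_\ell})
\]
for each $\ell$. Taking the tensor product over all $\ell$ and using the additivity of divisors under tensor products of line bundles makes the intermediate factors cancel in telescoping fashion, leaving
\[
L_j\otimes L_0^{-1}\;\cong\;\mathscr{O}_Y\Bigl(\textstyle\sum_{\ell=1}^{k} D_{b_\ell}\Bigr)\;=\;\mathscr{O}_Y(D_p),
\]
where the last equality is the natural extension of $D_{(-)}$ by additivity on weak-path concatenations, consistent with $D_{a^{-1}}=-D_a$. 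Since $L_0\cong \mathscr{O}_Y$ by the chosen normalisation, this gives $L_j\cong \mathscr{O}_Y(D_p)$.

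Finally I would remark that the right-hand side is automatically independent of the chosen weak path $p$ from $0$ to $j$: any two such choices differ by a closed weak path at $0$, whose associated divisor must therefore be principal (equal to the divisor of the corresponding element of $e_0\widehat A e_0\cong \kk[M]$, which is the function field of $Y$). I do not anticipate any serious obstacle here; the only subtle point is verifying that the convention $D_{a^{-1}}=-D_a$ is compatible with tensoring line bundles and that arrows remain nonzero in $A$, both of which follow cleanly from consistency and Theorem~\ref{thm:ishiiueda09}\one.
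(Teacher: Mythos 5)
Your proposal is correct and follows essentially the same route as the paper, which records the isomorphism as immediate from the fact that $D_p$ is (by construction via $\phi$ and the convention $D_{a^{-1}}=-D_a$) the divisor of a section of $L_{\head(p)}\otimes L_{\tail(p)}^{-1}$, combined with the normalisation $L_0\cong\mathscr{O}_Y$. Your arrow-by-arrow telescoping merely fills in the details of that one-line observation, and your closing remark on path-independence matches the paper's discussion immediately after the lemma.
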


The main result of Bender--Mozgovoy~\cite[Theorem~4.2]{BenderMozgovoy09} presents a formula for any such divisor $D_p$ in terms of the torus-invariant prime divisors in $Y$, namely
\[
D_p = \sum_{\rho\in \Sigma(1)} \deg_{\rho} p\; E_\rho,
\] 
where $E_\rho := \overline{Y_\rho}$ is the orbit closure associated to the ray $\rho\in \Sigma(1)$. Observe that $D_\rho$ is a principal divisor iff $p \in e_i\widehat{A} e_i$ for some $i\in \qpol_0$. Indeed, it's clearly principal if $p \in e_0\widehat{A} e_0=\kk[M]$, and if $p \in e_i\widehat{A} e_i$ then $D_{qpq^{-1}}=D_p$ for any weak path $q\in e_0\widehat{A} e_i$. Conversely, if $p$ is not a cyclic path then $D_p$ is not principal, but the divisors of two weak paths $p_1,p_2$ with the same head and tail differ by the principal divisor $D_{p_1p_2^{-1}}$. 

\begin{lemma}\label{lem:widehat}
Let $\sigma\in \Sigma$. For any vertex $i\in \qpol_0$, we have 
isomorphisms of vector spaces
\begin{eqnarray*}
 \Gamma(U_\sigma,L_i) &\cong & \Span_\kk \{u \in e_i \widehat A e_0 \mid \deg_\rho u\ge 0 \; \forall \rho\subset \sigma\} = e_i \widehat A_\sigma e_0\\
 \Gamma(U_\sigma,L_i^{-1}) &\cong & \Span_\kk \{u \in e_0 \widehat{A} e_i \mid \deg_\rho u\ge 0\; \forall \rho\subseteq\sigma\} = e_0 \widehat{A}_\sigma e_i.
 \end{eqnarray*}
which, using the identification $\kk[U_\sigma]=e_0 \widehat A_\sigma e_0$, become isomorphisms of $\kk[U_\sigma]$-modules.
\end{lemma}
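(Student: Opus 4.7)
The plan is to reduce the statement to a standard toric-geometric computation. By the lemma immediately preceding, $L_i\cong \mathscr{O}_Y(D_p)$ for any chosen weak path $p\in e_i\widehat A e_0$ from $0$ to $i$, and by the Bender--Mozgovoy formula $D_p=\sum_{\rho\in \Sigma(1)}\deg_\rho(p)\,E_\rho$ is torus-invariant. The standard description of sections of a torus-invariant divisor on an affine toric chart therefore gives
\[
\Gamma(U_\sigma, L_i) \;=\; \bigoplus_{m\in M\,:\; \langle n_{\Pi_\rho}, m\rangle + \deg_\rho(p)\,\geq\, 0 \;\forall \rho\subset \sigma} \kk\cdot \chi^m.
\]
Under the identification $M=e_0\widehat A e_0$, the pairing $\langle n_{\Pi_\rho}, m\rangle$ equals $\deg_\rho(m)$, so the inequality reads $\deg_\rho(pm)\geq 0$ for every $\rho\subset \sigma$ by additivity of $\deg_\rho$ on products of weak paths.

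Next I would exhibit the explicit bijection. Send the character $\chi^m$ to the weak path $u=pm\in e_i\widehat A e_0$; the degree condition above is precisely $u\in e_i\widehat A_\sigma e_0$. Conversely, given $u\in e_i\widehat A_\sigma e_0$, the element $m:=p^{-1}u\in e_0\widehat A e_0$ is well defined because all arrows are invertible in $\widehat A$, and it satisfies $\deg_\rho(m)+\deg_\rho(p)=\deg_\rho(u)\geq 0$ for all $\rho\subset \sigma$. These operations are mutually inverse and hence establish the claimed bijection of vector spaces.

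For the $\kk[U_\sigma]$-module structure, recall that $\kk[U_\sigma]=e_0\widehat A_\sigma e_0$, and that multiplication in this commutative ring corresponds to multiplication of characters. For $g\in e_0\widehat A_\sigma e_0$, the action $\chi^g\cdot \chi^m=\chi^{gm}$ maps to $p(gm)=(pm)g$ by commutativity of $e_0\widehat A e_0$, which is precisely the right multiplication by $g$ on $pm\in e_i\widehat A_\sigma e_0$. Hence the bijection is $\kk[U_\sigma]$-linear. For the second isomorphism one repeats the argument verbatim using $p^{-1}\in e_0\widehat A e_i$ and the identification $L_i^{-1}\cong \mathscr{O}_Y(D_{p^{-1}})$; the assignment now sends $\chi^m$ to $mp^{-1}\in e_0\widehat A e_i$.

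The only real subtlety is keeping track of the order of composition of weak paths and the resulting left/right module conventions; no nontrivial calculation is required once the standard toric formula and the additivity of $\deg_\rho$ on products are in place. The invertibility of all arrows in the weak Jacobian algebra $\widehat A$, built in by construction, is precisely what ensures that the correspondence $\chi^m\mapsto pm$ is a bijection rather than only an injection, and is the one feature that distinguishes this argument from the purely toric one.
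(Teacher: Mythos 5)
Your proposal is correct and follows essentially the same route as the paper: fix a weak path $p\in e_i\widehat A e_0$, use $L_i\cong\mathscr{O}_Y(D_p)$ together with the standard toric description of sections over $U_\sigma$, identify $M$ with $e_0\widehat A e_0$ via $\deg_\rho$, and obtain the isomorphism by multiplication with $p$. You simply spell out the details (the inverse map via $p^{-1}$, additivity of $\deg_\rho$, and the $\kk[U_\sigma]$-linearity) that the paper's one-line proof leaves implicit.
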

\begin{proof}
Fix a weak path $p \in e_i \widehat A e_0$ then 
$L_i \cong \mathscr{O}_Y(D_{p})$.
By definition
$$
\Gamma(U_\sigma,L_i)= \Span \{ q \in e_0\widehat A e_0 |\deg_\rho q \ge \deg_\rho -p \; \forall \rho\subset \sigma\}.
$$
Multiplying everything with $p$ gives the required isomorphism.
The proof for $L_i^{-1}$ is similar.
\end{proof}

\section{Computing images of vertex simples}
In this section we introduce the crepant resolution and the derived equivalence that we study throughout this paper. We describe the images of the vertex simples in terms of the tautological line bundles and the higher quiver of the dimer model, and we describe explicitly the cohomology sheaves of this object.

\subsection{The special stability condition and the dual tilting bundle}
Let $\qpol$ be a consistent dimer model on a real 2-torus with superpotential $W$ and Jacobian algebra $A$. One of the goals of this paper is to generalise to the dimer setting existing results on the $G$-Hilbert scheme for a finite abelian subgroup $G\subset \SL(3,\CC)$. To do so, we consider a stability condition of the form 
\begin{equation}
\label{eqn:vartheta}
\vartheta = (\vartheta_i)\in \Theta \text{ satisfying }\vartheta_i>0 \text{ for }i\neq 0,
\end{equation}
and hence $\vartheta_0<0$. Every such $\vartheta$ is generic, and we let $Y:=\mathcal{M}_\vartheta$ denote the fine moduli space of $\vartheta$-stable $A$-modules of dimension vector $\underline{1}$. Theorem~\ref{thm:ishiiueda08} gives a projective, crepant resolution $\tau\colon Y\to X=\Spec Z(A)$. The next result characterises $\vartheta$-stability combinatorially.

\begin{lemma}
\label{lem:specialtheta}
Let $V$ be a representation of $\qpol$ defining an $A$-module of dimension vector $\underline{1}$, and let $x_0\in X$ denote the unique torus-invariant point. Then:
\begin{enumerate}
\item[\one] $V$ is $\vartheta$-stable iff $V$ admits a nonzero path from $0$ to every other vertex $i\in \qpol_0$; and
\item[\two] $y\in Y$ lies in $\tau^{-1}(x_0)$ if and only if all maps in $V_y$ with head at vertex $0$ are zero.
\end{enumerate}
\end{lemma}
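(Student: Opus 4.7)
For part \one, my plan is to exploit the bijection between subrepresentations of $V$ and subsets of $\qpol_0$ that are closed under nonzero arrows. Explicitly, a subrepresentation $V' \subseteq V$ corresponds to $S = \{i \in \qpol_0 : V'_i \neq 0\}$, and such subsets $S$ arise precisely as those with the property that whenever $i \in S$ and $a \in \qpol_1$ satisfies $\tail(a) = i$ with $v_a \neq 0$, then $\head(a) \in S$. For any such $V'$, using $\sum_i \vartheta_i = 0$ together with $\vartheta_i > 0$ for $i \neq 0$, the quantity $\vartheta(V') = \sum_{i \in S}\vartheta_i$ is strictly positive when $0 \notin S$ (a nonempty sum of positive terms) and, when $0 \in S \subsetneq \qpol_0$, equals $-\sum_{i \in \qpol_0 \setminus S}\vartheta_i < 0$. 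Hence $\vartheta$-stability is equivalent to the absence of a proper subrepresentation containing vertex $0$; this in turn is equivalent to the subrepresentation generated by $V_0$ being all of $V$, which is precisely the existence of a nonzero path from $0$ to each vertex.

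For part \two, my plan is to use the identification $Z(A) \cong e_0 A e_0 \cong \kk[X]$ from \eqref{eqn:centre}. Since $\sigma$ is a full-dimensional strictly convex cone, the maximal ideal of the unique torus-fixed point $x_0 \in X$ is generated by the nontrivial monomials in $\kk[\sigma^\vee \cap M]$, which under \eqref{eqn:centre} correspond exactly to classes of cyclic paths at $0$ of positive length. Tracking how $Z(A)$ acts on $V_y$ via its central embedding into $\End_\kk(V_y)$, the character $\tau(y)$ sends each cyclic path $p$ at $0$ to the scalar $v_p$. Consequently $y \in \tau^{-1}(x_0)$ if and only if $v_p = 0$ for every positive-length cyclic path $p$ at $0$.

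One direction of \two\ is then immediate: if every arrow $a$ with $\head(a) = 0$ satisfies $v_a = 0$ in $V_y$, then every positive-length cyclic path at $0$ ends in such an arrow and hence vanishes. For the converse I combine the above criterion with part \one. Given an arrow $a$ with $\head(a) = 0$ and $\tail(a) = j$, part \one\ applied to the $\vartheta$-stable module $V_y$ produces a path $q$ from $0$ to $j$ with $v_q \neq 0$; then $aq$ is a positive-length cyclic path at $0$, so $0 = v_{aq} = v_a v_q$ forces $v_a = 0$. The only mildly delicate step is the identification of the maximal ideal of $x_0$ with the ideal generated by positive-length cyclic paths at $0$, which is a standard consequence of the toric description of $X$ together with \eqref{eqn:centre}; the rest of the argument is essentially bookkeeping with paths.
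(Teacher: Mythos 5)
Your proof is correct and follows essentially the same route as the paper: part \one\ is the standard observation that for dimension vector $\underline{1}$ stability amounts to vertex $0$ generating the module, and part \two\ uses the identification $Z(A)\cong e_0Ae_0\cong\kk[X]$ together with part \one\ to produce, for each arrow $a$ with head at $0$, a nonzero path $q$ from $0$ to $\tail(a)$ so that $v_{aq}=v_av_q=0$ forces $v_a=0$. You simply spell out a few steps the paper leaves implicit (the subset description of subrepresentations and the identification of the maximal ideal of $x_0$ with the positive-length cycles at $0$).
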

\begin{proof}
Part \one\ is immediate from the definition of $\vartheta$-stability and the fact that the dimension vector is $\underline{1}$. For part \two, write $V_y=(v_a)_{a\in \qpol_1}$ for the $\vartheta$-stable representation corresponding to $y\in Y$. The centre of $A$ is isomorphic to $e_0 Ae_0$ by \eqref{eqn:centre}, so every central element defines a cycle $c$ passing through vertex 0. We have $\tau(y)=x_0$ if and only if for every any nontrivial central element in $A$, the corresponding cycle $c$ satisfies $v_c =0$. Choose any arrow $a\in \qpol_1$ with head at vertex $0$ and set $i=\tail(a)$. By part \one, $V_y$ admits a nonzero path $p$ from $0$ to $i$, so $v_a= 0$ if and only if the cycle $pa$ through vertex 0 satisfies $v_{pa}=0$. This proves the result.
\end{proof}

Theorem~\ref{thm:ishiiueda09} implies that the tautological bundle $T$ on $Y$ is tilting, but our interest lies primarily with the derived equivalence induced by the dual bundle $T^\vee = \mathcal{H}om_{\mathscr{O}_Y}(T, \mathscr{O}_Y)$.

\begin{lemma}
\label{lem:3.1}
Let $T$ denote the tautological bundle on $Y=\mathcal{M}_\vartheta$. Then
\begin{enumerate}
\item[\one] there is an isomorphism of $\kk$-algebras $\phi^\vee\colon A^{\mathrm{op}} \to \End_{\mathscr{O}_Y}(T^\vee);$
\item[\two] the dual bundle $T^\vee \cong\bigoplus_{i \in \qpol_0}L_i^{-1}$ is a tilting bundle on $Y$; and
\item[\three] the induced equivalence of derived categories
\[
\mathbf{R}\!\Hom_{\mathscr{O}_Y}(T^\vee,-)\colon D^b(\coh(Y)) \longrightarrow D^b(\modA)
\]
sends $L_i^{-1}$ to the indecomposable projective $A$-module $Ae_i$ for each $i\in \qpol_0$.
\end{enumerate}
\end{lemma}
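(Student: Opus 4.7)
The strategy is to obtain everything by dualizing the results for $T$ provided by Theorem~\ref{thm:ishiiueda09}, together with the standard Morita-theoretic interpretation of tilting bundles.

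For part \one, I would start from the tautological isomorphism $\phi\colon A\to \End_{\mathscr{O}_Y}(T)$ and apply the contravariant functor $\mathcal{H}om_{\mathscr{O}_Y}(-,\mathscr{O}_Y)$. Since $T=\bigoplus_i L_i$ is locally free of finite rank, dualization gives a canonical $\kk$-algebra isomorphism $\End_{\mathscr{O}_Y}(T)^{\op}\xrightarrow{\sim} \End_{\mathscr{O}_Y}(T^\vee)$ that swaps source and target (at the level of summands, it sends $\Hom(L_i,L_j)$ to $\Hom(L_j^{-1},L_i^{-1})$ by transposition). Precomposing with the opposite algebra version of $\phi$ yields the desired isomorphism $\phi^\vee\colon A^{\op}\to \End_{\mathscr{O}_Y}(T^\vee)$.

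For part \two, the decomposition $T^\vee\cong \bigoplus_{i\in \qpol_0}L_i^{-1}$ is immediate because $(L_i)^\vee\cong L_i^{-1}$ for line bundles. For the Ext-vanishing condition, I would compute
\[
\Ext^k_{\mathscr{O}_Y}(T^\vee,T^\vee) \;=\; \bigoplus_{i,j\in \qpol_0}H^k\bigl(Y, L_i\otimes L_j^{-1}\bigr),
\]
and observe that this is the same set of cohomology groups (indexed merely with $i$ and $j$ interchanged) that computes $\Ext^k_{\mathscr{O}_Y}(T,T)$; the latter vanishes for $k>0$ by Theorem~\ref{thm:ishiiueda09}\two, so the former does too. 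To verify that $T^\vee$ generates $D^b(\coh(Y))$, I would invoke the fact that $Y$ is smooth, so the sheaf-theoretic duality functor $E\mapsto \mathbf{R}\mathcal{H}om_{\mathscr{O}_Y}(E,\mathscr{O}_Y)$ is an anti-equivalence on $D^b(\coh(Y))$; it sends $T$ to $T^\vee$ and therefore carries any classical generator to a classical generator. Together these show that $T^\vee$ is a tilting bundle.

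For part \three, the identification of the image is a formal consequence of the isomorphism in part \one. Viewing $T^\vee$ as a left $\mathscr{O}_Y$-module and a right $\End_{\mathscr{O}_Y}(T^\vee)$-module (hence, via $\phi^\vee$, a right $A^{\op}$-module, i.e.\ a left $A$-module), the functor $\mathbf{R}\Hom_{\mathscr{O}_Y}(T^\vee,-)$ sends the summand $L_i^{-1}$ of $T^\vee$ to its own endomorphism-algebra description, namely $\Hom_{\mathscr{O}_Y}(T^\vee,L_i^{-1})$, which under $\phi^\vee$ is identified with $e_i\cdot A^{\op}$ as a right $A^{\op}$-module. Unpacking the opposite algebra convention, the underlying set is $A e_i$ and the right $A^{\op}$-action coincides with the left $A$-action, so the image is precisely the indecomposable projective left $A$-module $Ae_i$.

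The only step requiring genuine care is the bookkeeping of left versus right, and of $A$ versus $A^{\op}$, in part \three; the mathematical content in all three parts is either a direct dualization of Theorem~\ref{thm:ishiiueda09} or a standard application of Morita theory for tilting bundles.
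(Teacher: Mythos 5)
Your proposal is correct, but it takes a genuinely different route from the paper for parts \one\ and \two. The paper does not dualize directly: it reverses the orientation of every arrow to obtain the opposite consistent dimer model $\qpol^{\op}$, checks via stable perfect matchings and Lemma~\ref{lem:fan} that the fine moduli space $\mathcal{M}_{-\vartheta}(\qpol^{\op})$ has the same fan as $Y=\mathcal{M}_{\vartheta}(\qpol)$, identifies $T^\vee=\bigoplus_i L_i^{-1}$ as the tautological bundle on that moduli space, and then simply applies Theorem~\ref{thm:ishiiueda09} to $\qpol^{\op}$ to get both the algebra isomorphism and the tilting property. Your argument instead uses the canonical anti-isomorphism $\End_{\mathscr{O}_Y}(T)^{\op}\cong\End_{\mathscr{O}_Y}(T^\vee)$ for a locally free sheaf, the identity of the Ext groups $\bigoplus_{i,j}H^k(Y,L_i\otimes L_j^{-1})$ computing both $\Ext^k(T,T)$ and $\Ext^k(T^\vee,T^\vee)$, and transport of classical generation under the anti-autoequivalence $\mathbf{R}\mathcal{H}om(-,\mathscr{O}_Y)$ (valid since $Y$ is smooth, so every bounded complex is perfect). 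This is more elementary and avoids comparing two moduli spaces, and it is sound; the one point deserving a sentence of care is that the generation clause in the definition of a tilting bundle is indeed preserved by the contravariant equivalence. What the paper's detour buys is a moduli-theoretic interpretation of $T^\vee$ as a tautological bundle for $-\vartheta$-stable $A^{\op}$-modules, and the opposite dimer model $\qpol^{\op}$ introduced there is reused later (for instance, the Mozgovoy--Reineke projective resolution is applied to $\qpol^{\op}$ just before Lemma~\ref{lem:imageSi}), so your shortcut would still require introducing $\qpol^{\op}$ elsewhere. Part \three\ of your argument is essentially the paper's computation, with the opposite-algebra bookkeeping $e_iA^{\op}\cong Ae_i$ carried out correctly.
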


\begin{proof}
Reversing the orientation of every arrow in $\qpol$ produces a consistent dimer model $\qpol^{\op}$ with Jacobian algebra $A^{\op}$. To make the distinction between $\qpol$ and $\qpol^{\mathrm{op}}$ explicit, we temporarily write $\Mcal_{\vartheta}(\qpol)$ for $\Mcal_{\vartheta}$. Since the stability parameter $-\vartheta$ is generic for $\qpol^{\mathrm{op}}$, we may form the fine moduli space $\Mcal_{-\vartheta}(\qpol^{\mathrm{op}})$ of $-\vartheta$-stable $A^{\mathrm{op}}$-modules with dimension vector $\underline{1}$. We claim that the underlying varieties $Y=\Mcal_{\vartheta}(\qpol)$ and $\Mcal_{-\vartheta}(\qpol^{\mathrm{op}})$ are isomorphic. To see this, note first the set of perfect matchings for $\qpol$ coincides precisely with those for $\qpol^{\op}$ because the opposite dimer model is obtained by reversing the orientation of each arrow. Moreover, the set of $\vartheta$-stable perfect matchings for $\qpol$ coincides precisely with the $-\vartheta$-stable perfect matchings for $\qpol^{\op}$. Lemma~\ref{lem:fan} implies that the set of stable perfect matchings determines the fan of each moduli space, so the fan of $Y=\Mcal_{\vartheta}(\qpol)$ coincides with that of $\Mcal_{-\vartheta}(\qpol^{\mathrm{op}})$. This proves the claim.

We now establish the statements from the lemma. Recall from Remark~\ref{rem:ishiiueda09}(1) that $H^0(L_j)$ is isomorphic to the space $e_j A e_0$ of classes of paths in $\qpol$ from 0 to $j$, so the space of sections of the $j$th tautological bundle on $\Mcal_{-\vartheta}(\qpol^{\mathrm{op}})$ is isomorphic to the space $e_jA^{\op}e_0$. The zero vertex defines the trivial bundle on every moduli space, so the passage from $Y=\Mcal_{\vartheta}(\qpol)$ to $\Mcal_{-\vartheta}(\qpol^{\mathrm{op}})$ amounts to replacing each tautological line bundle $L_i$ by its inverse $L_i^{-1}$. Thus, the tautological bundle on $\Mcal_{-\vartheta}(\qpol^{\mathrm{op}})$ is $T^\vee \cong \bigoplus_{i \in \qpol_0}L_i^{-1}$. Applying Theorem~\ref{thm:ishiiueda09} to $\Mcal_{-\vartheta}(\qpol^{\mathrm{op}})$ establishes \one\ and \two, where we write the tautological isomorphism as $\phi^\vee\colon A^{\mathrm{op}} \to \End_{\mathscr{O}_Y}(T^\vee)$. For \three, the fact that $T^\vee$ is tilting implies that $\Ext^k_{\mathscr{O}_Y}(L_j^{-1},L_i^{-1})=0$ for all $k>0$ and $i, j\in \qpol_0$, giving
$$
\mathbf{R}\!\Hom_{\mathscr{O}_Y}(T^\vee,L_i^{-1}) \cong \bigoplus_{j \in Q_0}\Hom_{\mathscr{O}_Y}(L_j^{-1},L_i^{-1})\cong \Hom_{\mathscr{O}_Y}(L_i, T),
$$
which is $\End_{\mathscr{O}_{Y}}(T)e_i$ by Remark~\ref{rem:ishiiueda09}(1). The isomorphism from Theorem~\ref{thm:ishiiueda09}\one\ identifies this with the indecomposable projective $A$-module $A e_i$. It remains to note that the codomain of the functor is $D^b(\modA)$ because $(A^{\mathrm{op}})^{\mathrm{op}}=A$. 
 \end{proof}

Example~\ref{exa:McKay} implies that the fine moduli space $Y=\mathcal{M}_\vartheta$ coincides with the $G$-Hilbert scheme when $A=\kk[x,y,z]*G$ is the skew group algebra. The equivalence between $\modA$ and the category of $G$-equivariant coherent sheaves on $\mathbb{A}^3_\kk$ identifies the indecomposable projective module $Ae_{\rho}$ associated to $\rho\in \qpol_0$ with the $G$-equivariant sheaf $\mathscr{O}_{\mathbb{A}^3_\kk}\otimes\rho$, and the Fourier--Mukai transform $\Phi$ induced by the universal family on the $G$-Hilbert scheme studied by Bridgeland--King--Reid~\cite{BKR01} satisfies $\Phi(L_\rho^{-1})\cong \mathscr{O}_{\mathbb{A}^3_\kk}\otimes \rho$ for every $\rho\in \qpol_0$, see Craw--Ishii~\cite[Section~2.4]{CrawIshii04}. Therefore, when working with the moduli space $Y=\mathcal{M}_\vartheta$, we can extend existing results on the McKay correspondence only when we consider the derived equivalence
\[
\Phi(-):= \mathbf{R}\!\Hom_{\mathscr{O}_Y}(T^\vee,-)\colon D^b(\coh(Y)) \longrightarrow D^b(\modA)
\]
 induced by the dual $T^\vee$ of the tautological bundle. We choose to write the quasi-inverse as 
\begin{equation}
\label{eqn:Psi}
\Psi(-):= T^\vee\ltensor_{A} - \;\;\colon D^b(\modA) \longrightarrow D^b(\coh(Y))
\end{equation}
rather than $-\ltensor_{A^{\textrm{op}}} T^\vee$. Lemma~\ref{lem:3.1}\three\ shows that the derived equivalence $\Psi$ sends the projective $A$-module $Ae_i$ to the line bundle $L_i^{-1}$ for each $i\in \qpol_0$. 

\subsection{Images of vertex simples}
Let $Y=\mathcal{M}_\vartheta$ be the fine moduli space defined by any stability parameter $\vartheta$ of the form \eqref{eqn:vartheta} and let $\Psi$ denote the equivalence of derived categories from \eqref{eqn:Psi}. 

We now describe the images under $\Psi$ of the vertex simple $A$-modules $S_i:=\kk e_i$ for $i\in \qpol_0$ in terms of the tautological line bundles on $Y$. Applying the result of Mozgovoy--Reineke~\cite[Proposition~6.2]{MozgovoyReineke10} to the consistent dimer model $\qpol^{\mathrm{op}}$ introduced in the proof of Lemma~\ref{lem:3.1} above shows that the minimal projective $A^{\mathrm{op}}$-module resolution of $S_i$ is
\[
A e_i \xlongrightarrow{\cdot b} \bigoplus_{b\colon h\to i}Ae_{h}\xlongrightarrow{\cdot \omega_{ba}} \bigoplus_{a\colon i\to j} Ae_j \xlongrightarrow{\cdot a} Ae_i  \longrightarrow S_i
\]
where $\omega_{ba} = (-1)^{bpa} p$ if the path $bpa$ is a term in $W$, and it is zero otherwise.

\begin{lemma}
\label{lem:imageSi}
For $i\in \qpol_0$, the object $\Psi(S_i)$ is quasi-isomorphic to the complex
\[
 L_i^{-1} \xlongrightarrow{\phi^\vee(b^{\textrm{op}})} \bigoplus_{b\colon h\to i} L^{-1}_{h} \xlongrightarrow{\phi^\vee(\omega_{ba}^{\textrm{op}})}\bigoplus_{a\colon i\to j} L^{-1}_{j}\xlongrightarrow{\phi^\vee(a^{\textrm{op}})} \underline{L^{-1}_i}
\]
where the term in degree zero is underlined.
\end{lemma}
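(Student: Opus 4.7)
The strategy is to replace $S_i$ by the minimal projective resolution displayed immediately before the lemma and apply the derived functor $\Psi$ term by term. First, since every module appearing in the resolution is a projective left $A$-module, hence flat, the derived tensor product $T^\vee \ltensor_A -$ coincides with the ordinary tensor product $T^\vee \otimes_A -$ applied to the truncated resolution (dropping $S_i$). Thus $\Psi(S_i)$ is represented by the four-term complex obtained in this way, with $\Psi(Ae_i)$ in degree zero.

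Next I would compute the terms. Via $\phi^\vee$ from Lemma~\ref{lem:3.1}, the bundle $T^\vee$ carries the structure of a right $A$-module, so there is a natural identification $T^\vee \otimes_A Ae_j \cong T^\vee e_j$. Combining this with the decomposition $T^\vee = \bigoplus_{k\in \qpol_0} L_k^{-1}$ shows $T^\vee \otimes_A Ae_j \cong L_j^{-1}$, consistent with the fact that the quasi-inverse functor $\Phi$ in Lemma~\ref{lem:3.1}\three\ sends $L_j^{-1}$ to $Ae_j$. Applying this to each term of the truncated resolution produces precisely the four-term sequence of line bundles displayed in the statement, with $L_i^{-1}$ in degree $0$.

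It remains to identify the differentials. Each differential in the resolution is given by right multiplication by an element $r\in A$ (an arrow $a$, the element $\omega_{ba}$ arising from cyclic derivatives of $W$, or an arrow $b$). Under the identification $T^\vee \otimes_A Ae_j \cong T^\vee e_j$, the induced map $L_j^{-1} \to L_k^{-1}$ is right multiplication by $r$ on $T^\vee$; by definition of the right $A$-module structure this is exactly $\phi^\vee(r^{\mathrm{op}})$. Matching this termwise with the three differentials of the resolution recovers $\phi^\vee(a^{\mathrm{op}})$, $\phi^\vee(\omega_{ba}^{\mathrm{op}})$, and $\phi^\vee(b^{\mathrm{op}})$ as claimed.

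The only subtle point is the consistent tracking of left/right conventions: one must verify that sources and targets of each differential line up once the original resolution (whose differentials act by right multiplication and thus reverse the direction of arrows in $\qpol$) is translated through the tensor-hom formalism. This is purely a bookkeeping matter and does not present a genuine obstacle, since the algebraic content is fixed by Lemma~\ref{lem:3.1} and the explicit form of the Mozgovoy--Reineke resolution; the calculation is essentially formal once these ingredients are assembled.
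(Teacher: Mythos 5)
Your proposal is correct and follows essentially the same route as the paper: replace $S_i$ by the Mozgovoy--Reineke projective resolution, observe that the derived tensor product is then computed by the ordinary one, identify $T^\vee\otimes_A Ae_j\cong L_j^{-1}$, and translate right multiplication by $r$ into the map $\phi^\vee(r^{\mathrm{op}})$ via Lemma~\ref{lem:3.1}. The paper's proof is just a terser version of the same bookkeeping.
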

\begin{proof}
We compute that 
\[
\Psi(S_i) =  T^\vee\ltensor_{A} S_i \cong T^\vee\otimes_{A}  \Bigg(A e_i \xlongrightarrow{\cdot b} \bigoplus_{b\colon h\to i}Ae_{h}\xlongrightarrow{\cdot \omega_{ba}} \bigoplus_{a\colon i\to j} Ae_j \xlongrightarrow{\cdot a} \underline{Ae_i}\Bigg).
\]
This is the given complex because $T^\vee=\bigoplus_{i\in \qpol_0} L_i^{-1}$, and because premultiplication by $a\colon i\to j$ is postmultiplication by $a^{\textrm{op}}\colon j\to i$ which, under the isomorphism $\phi^\vee\colon A^{\mathrm{op}} \to \End_{\mathscr{O}_Y}(T^\vee)$ from Lemma~\ref{lem:3.1}, corresponds to postcomposition with the map $\phi^\vee(a^{\textrm{op}}) \colon L^{-1}_{j}\to L^{-1}_i$.
 \end{proof}
 
 The cohomology sheaves of the complex $\Psi(S_i)$ from Lemma~\ref{lem:imageSi} can be computed using the main result from Craw--Quintero-V\'{e}lez~\cite{CrawQuinteroVelez12b}. To prove Theorem~\ref{thm:intromain2} we must compute $H^{k}(\Psi(S_i))$ for all $k\in \ZZ$. However, the description of $H^{-1}(\Psi(S_i))$ is rather complicated and our proof of Theorem~\ref{thm:intromain1} bypasses this calculation, so for now we describe only $H^{k}(\Psi(S_i))$ for $k\neq -1$ and we defer the case $k=-1$ to Proposition~\ref{prop:H-1}. Recall that the simple module $S_i$ lies in the socle of an $A$-module $V_y$ if and only if $S_i$ is a submodule of $V_y$, in which case we write $S_i\in \soc(V_y)$.
 
\begin{proposition}
\label{prop:cohomology}
For $i \in \qpol_0$, the cohomology sheaves $H^{k}(\Psi(S_i))$ vanish for all $k\not\in \{-2,-1,0\}$. In addition:
\begin{enumerate}
\item[\one] for $k=0$ and for the locus $Z_i = \big\{y\in Y \mid S_i \subseteq \soc(V_y)\big\}$, we have 
\[
H^0(\Psi(S_i)) \cong L_i^{-1}\vert_{Z_i}.
\]
 \item[\two] for $k=-2$ and for $D_i$ the greatest common divisor of the zero loci of the maps $\phi^\vee(b^{\textrm{op}})$ for arrows $b\colon h\to i$ in $\qpol$, we have 
\[
H^{-2}(\Psi(S_i))\cong L_i^{-1}(D_i)\vert_{D_i}.
\]
In fact, $H^{-2}(\Psi(S_i))\cong 0$ for $i\neq 0$.
\end{enumerate}
\end{proposition}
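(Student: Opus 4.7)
The complex described in Lemma~\ref{lem:imageSi} is concentrated in cohomological degrees $-3,-2,-1,0$, so $H^k(\Psi(S_i))$ is automatically zero for $k\notin\{-3,-2,-1,0\}$. To rule out degree $-3$, it suffices to show that the leftmost differential $L_i^{-1}\to \bigoplus_{b\colon h\to i} L_h^{-1}$ is injective. Because $L_i^{-1}$ is locally free of rank one, this follows as soon as a single component $\phi^\vee(b^{\op})$ is nonzero; the Bender--Mozgovoy formula $D_b = \sum_\rho \deg_\rho(b)\,E_\rho$ guarantees this, since every arrow of $\qpol$ appears in at least one $\vartheta$-stable perfect matching.

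For part \one, I would compute $H^0(\Psi(S_i))$ as the cokernel of the rightmost differential $d^{-1}\colon\bigoplus_{a\colon i\to j}L_j^{-1}\to L_i^{-1}$. Its components are multiplication by the tautological sections $\phi^\vee(a^{\op})$, whose divisors of zeros are the $D_a$, so the cokernel is $L_i^{-1}|_{Z}$, where $Z=\bigcap_{a\colon i\to j}V(D_a)$ is the (scheme-theoretic) common zero locus. A point $y\in Y$ lies in $V(D_a)$ precisely when the fibre of $\phi^\vee(a^{\op})$ at $y$ vanishes, i.e.~when $v_a=0$ in $V_y$. Requiring this for every arrow with tail $i$ is the condition that $\kk e_i$ is an $A$-submodule of $V_y$, that is $S_i\subseteq\soc(V_y)$, so $Z=Z_i$ as required.

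For part \two, I would factor the leftmost differential as
\[
L_i^{-1} \xlongrightarrow{\;s_{D_i}\;} L_i^{-1}(D_i) \xlongrightarrow{\;\widetilde d\;} \bigoplus_{b\colon h\to i}L_h^{-1},
\]
where $s_{D_i}$ is the canonical section of $\mathscr O_Y(D_i)$ vanishing along $D_i$ and the components of $\widetilde d$ have no common divisorial zeros (so $\widetilde d$ is injective). The short exact sequence $0\to L_i^{-1}\to L_i^{-1}(D_i)\to L_i^{-1}(D_i)|_{D_i}\to 0$ then identifies $H^{-2}(\Psi(S_i))$ with $L_i^{-1}(D_i)|_{D_i}$, provided $\ker d^{-2}$ coincides with the image of $\widetilde d$; equivalently, that the three-term sequence
\[
0\to L_i^{-1}(D_i) \xlongrightarrow{\;\widetilde d\;} \bigoplus_b L_h^{-1} \xlongrightarrow{\;d^{-2}\;}\bigoplus_a L_j^{-1}
\]
is exact. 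I expect this to be the main obstacle. I would verify it locally on each maximal toric chart $U_\sigma$, where every line bundle trivialises and every differential reduces to multiplication by a monomial in $\kk[U_\sigma]=e_0\widehat A_\sigma e_0$; the required exactness then becomes a combinatorial statement about monomial relations encoded by the Jacobian ideal of $W$, which is precisely the setting treated by \cite{CrawQuinteroVelez12b}.

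Finally, to deduce $H^{-2}(\Psi(S_i))=0$ when $i\neq 0$, it suffices to show $D_i=0$. A ray $\rho\in\Sigma(1)$ contributes to $D_i$ precisely when its $\vartheta$-stable perfect matching $\Pi_\rho$ contains every arrow $b\colon h\to i$; in that case any $\vartheta$-stable representation $V$ whose cosupport contains $\Pi_\rho$ would satisfy $v_b=0$ for every arrow ending at $i$, ruling out any nonzero path from $0$ to $i$ and contradicting Lemma~\ref{lem:specialtheta}\one. Hence $D_i=0$ and $L_i^{-1}(D_i)|_{D_i}=0$ whenever $i\neq 0$, as required.
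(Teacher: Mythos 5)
Your proposal is correct and follows essentially the same route as the paper: it reads off the degree bounds and $H^{-3}=0$ from the four-term complex of Lemma~\ref{lem:imageSi}, computes $H^0$ as the cokernel of the rightmost differential and identifies its support with the socle locus $Z_i$, outsources the identification $H^{-2}(\Psi(S_i))\cong L_i^{-1}(D_i)\vert_{D_i}$ to the wheel-cohomology result of \cite{CrawQuinteroVelez12b} (your factorisation through $L_i^{-1}(D_i)$ is just an unpacking of what that citation provides), and kills $H^{-2}$ for $i\neq 0$ by the same $\vartheta$-stability argument, phrased via stable perfect matchings rather than via $e_i$ being a generator. The only cosmetic quibble is that injectivity of the leftmost differential needs only that $\phi^\vee(b^{\op})$ is a nonzero map of line bundles on an integral variety, which is immediate from $\phi^\vee$ being an isomorphism and $b^{\op}\neq 0$ in $A^{\op}$; the perfect-matching statement you invoke is unnecessary for this.
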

\begin{proof}
The complex from Lemma~\ref{lem:imageSi} satisfies $H^{k}(\Psi(S_i))=0$ for $k\not\in \{-3,-2,-1,0\}$. The arrow $b^{\textrm{op}}\in \qpol^{\textrm{op}}$ defines a nonzero element in the path algebra $A^{\textrm{op}}$, so it's image under the isomorphism $\phi^\vee\colon A^{\mathrm{op}} \to \End_{\mathscr{O}_Y}(T^\vee)$ is a map of line bundles $\phi^\vee(b^{\textrm{op}})$ that is not the zero map. It follows that the left-hand map in the complex from Lemma~\ref{lem:imageSi}, so $H^{-3}(\Psi(S_i))=0$.  For \one, the right-hand map in the complex is surjective everywhere except on the common zero locus $Z_i^\prime$ of the maps $\phi^\vee(a)$ for $a\colon i\to j$, in which case the cokernel is the fibre of $L_i^{-1}$ over the given point. We claim that $Z_i = Z_i^\prime$. Indeed, the simple module $S_i$ is a submodule of a $\vartheta$-stable $A$-module $V_y$ for $y\in Y$ if and only each morphism $\phi(a)\colon L_i\to L_j$ for $a\colon i\to j$ vanishes at $y\in Y$ which holds if and only if each $\phi^\vee(a^{\textrm{op}})\colon L_j^{-1}\to L_i^{-1}$ for $a\colon i\to j$ vanishes at $y\in Y$. This proves the claim and hence proves \one.  For part \two, the complex from Lemma~\ref{lem:imageSi} is a `wheel' of line bundles in the sense of Craw--Quintero-V\'{e}lez~\cite{CrawQuinteroVelez12b}, so the description of $H^{-2}(\Psi(S_i))$ follows  from \cite[Theorem~1.1(3)]{CrawQuinteroVelez12b}. For the final statement, suppose $H^{-2}(\Psi(S_i))\not\cong 0$ for $i\neq 0$. Part \three\ implies that the zero loci of the maps $\phi^\vee(b)$ for $b\colon h\to i$ is nonempty. Thus, there exists $y\in Y$ for which the corresponding $\vartheta$-stable $A$-module $V_y$ has $e_i$ as one of it's generators. This forces $\vartheta_i<0$ for $i\neq 0$, a contradiction.
\end{proof}

\begin{remarks}
\begin{enumerate}
\item[\one] For a finite subgroup $G\subset \SL(2,\CC)$, Crawley--Boevey~\cite[Theorem~2]{Crawley-Boevey00} characterises the exceptional curves in the minimal resolution of $\CC^2/G$ in terms of the loci parametrising modules that contain a simple module in their socle. Proposition~\ref{prop:cohomology}\one\ (and the stronger version Proposition~\ref{prop:type0or1wall} to follow) should be seen as an extension of this result to dimension three.  
\item[\two] The final statement in Proposition~\ref{prop:cohomology} explains our choice of the special stabililty parameter $\vartheta\in \Theta$ and hence our choice of the moduli space $Y=\mathcal{M}_\vartheta$.
\end{enumerate}
\end{remarks}

\subsection{The zero vertex case}
In this section we compute the cohomology sheaves of the object $\Psi(S_0)$ arising from the zero vertex. We show that the derived dual $\Psi(S_0)^\vee = \mathbf{R}\mathcal{H}om(\Psi(S_0),\mathscr{O}_Y)$ is a shift of the structure sheaf of the locus $\tau^{-1}(x_0)$ in $Y$. This implies that $\Psi(S_0)$ is the dualising complex of the locus $\tau^{-1}(x_0)$.

Using Lemma~\ref{lem:widehat}, the complex $\Psi(S_i)$ from Proposition~\ref{lem:imageSi} satisfies
\[
\Gamma\big(U_\sigma,\Psi(S_i)\big) = \left(e_0 \widehat A_\sigma e_i \xlongrightarrow{\cdot b} \bigoplus_{b\colon h\to i} e_0\widehat A_\sigma e_{h}\xlongrightarrow{\cdot \omega_{ba}} \bigoplus_{a\colon i\to j} e_0 \widehat A_\sigma e_j \xlongrightarrow{\cdot a} \underline{e_0 \widehat A_\sigma e_i}\right),
\]
and the dual complex $\Psi(S_i)^\vee$ then satisfies
\[
 \Gamma\big(U_\sigma,\Psi(S_i)^\vee\big) = \left( e_i\widehat A_\sigma e_0  \xlongleftarrow{b\cdot } \bigoplus_{b\colon h\to i} e_{h}\widehat A_\sigma e_0 \xlongleftarrow{\omega_{ba}\cdot } \bigoplus_{a\colon i\to j} e_j\widehat A_\sigma e_0 \xlongleftarrow{a\cdot } \underline{ e_i\widehat A_\sigma e_0 }\right).
\]
We now show that for $i=0$ and for the stability condition $\vartheta$, the object $\Psi(S_i)^\vee$ is a pure sheaf.

\begin{proposition}
\label{prop:dualPsiS0}
The derived dual object $\Psi(S_0)^\vee$ is quasi-isomorphic to the pushforward of the structure sheaf of $\tau^{-1}(x_0)$ shifted by $3$. 
\end{proposition}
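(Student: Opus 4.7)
The plan is to verify the quasi-isomorphism on each torus-invariant affine chart $U_\sigma$ of $Y$ by computing the cohomology of $\Gamma(U_\sigma,\Psi(S_0)^\vee)$ and showing it is concentrated in cohomological degree $3$, where it equals the coordinate ring of $\tau^{-1}(x_0)\cap U_\sigma$. Setting $R:=\kk[U_\sigma]$, Lemma~\ref{lem:widehat} identifies each $e_i\widehat A_\sigma e_0$ with a free rank-one $R$-module, so on $U_\sigma$ the complex takes the shape
\[
R\xleftarrow{\beta} R^{n_0}\xleftarrow{\Omega} R^{n_0}\xleftarrow{\alpha} R
\]
placed in cohomological degrees $3,2,1,0$. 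Here $n_0$ is the valency of vertex $0$, $\alpha$ is the column of left-multiplications by the arrows out of $0$, $\beta$ is the row of left-multiplications by the arrows into $0$, and $\Omega$ is the matrix with entries $\omega_{ba}$ coming from the superpotential.

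Vanishing of $H^0$ is immediate: the map $q\mapsto(aq)_a$ is injective because $R$ is an integral domain and each arrow $a\colon 0\to j$ gives a nonzero element of the rank-one free $R$-module $e_j\widehat A_\sigma e_0$. For $H^3=R/J$, where $J$ denotes the image of $\beta$, I identify $J$ with the defining ideal $I$ of $\tau^{-1}(x_0)\cap U_\sigma$. The inclusion $J\subseteq I$ follows from Lemma~\ref{lem:specialtheta}\two, since each generator $br_b$ of $J$ is a cycle at vertex $0$ terminating in an arrow into $0$, and so vanishes on $\tau^{-1}(x_0)$. For $I\subseteq J$, recall that $I$ is the pullback of the maximal ideal $\mathfrak{m}_{x_0}\subseteq Z(A)\cong e_0Ae_0$, which is generated by nontrivial cycles at $0$; each such cycle factors as $bp$ with $b\colon h\to 0$ an arrow into $0$ and $p\in e_hAe_0\subseteq e_h\widehat A_\sigma e_0$ a path in $A$, whence $bp\in J$.

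The main obstacle is the vanishing $H^1=H^2=0$. My plan is to exploit the palindromic shape $(1,n_0,n_0,1)$ of the complex---a reflection of the $3$-Calabi--Yau property of $A$---together with the combinatorics of the dimer model at vertex $0$, where in- and out-arrows alternate cyclically around the vertex and the faces between adjacent pairs supply the entries $\omega_{ba}$ of $\Omega$. Reducing to a maximal cone $\sigma=\langle n_{\Pi_1},n_{\Pi_2},n_{\Pi_3}\rangle\in\Sigma$ so that $U_\sigma\cong\AA^3_\kk$, and fixing for each $j\in\qpol_0$ a $\sigma$-positive weak path $p_j\in e_j\widehat A_\sigma e_0$ of minimal multidegree on the rays of $\sigma$, one trivialises each $e_j\widehat A_\sigma e_0\cong R$; under this identification $\alpha$, $\beta$ and $\Omega$ become explicit matrices of monomials in the three generators of $R$, and middle exactness can then be verified directly using the superpotential relations $\partial_c W=0$.

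The technical heart is to check that $\Omega$ accounts for \emph{every} syzygy between the entries of $\alpha$ and of $\beta$, so that no spurious classes survive in the middle. Consistency of the dimer model, in its cancellation-property formulation, is the key hypothesis here: it guarantees that no relations arise beyond those enforced by $W$, so the complex genuinely behaves as a Koszul-type resolution of $R/I$. Once the local quasi-isomorphism is established on every maximal chart, the statement on lower-dimensional cones follows by localisation, and the global result $\Psi(S_0)^\vee\simeq i_*\mathscr{O}_{\tau^{-1}(x_0)}$ shifted into cohomological degree $3$ follows by patching.
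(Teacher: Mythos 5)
Your overall architecture matches the paper's: work chart by chart, show the complex of sections on $U_\sigma$ has cohomology only at the top, and identify that cohomology with the coordinate ring of $\tau^{-1}(x_0)\cap U_\sigma$. The computations you actually carry out ($H^0=0$ by integrality, and the two containments for $H^3$) are fine as far as they go. But the central difficulty of the proposition --- the vanishing $H^1=H^2=0$ --- is not proved; it is deferred to the assertions that ``middle exactness can then be verified directly using the superpotential relations'' and that consistency ``guarantees that no relations arise beyond those enforced by $W$, so the complex genuinely behaves as a Koszul-type resolution.'' Neither claim is a proof. The cancellation property is a statement about the injectivity of $A\to\widehat A$; converting it into the assertion that the $n_0\times n_0$ matrix $\Omega$ of superpotential terms generates \emph{all} syzygies between the entries of $\alpha$ and of $\beta$ over $R$ is precisely the content that must be established, and writing $\alpha,\beta,\Omega$ as monomial matrices in three variables does not make this automatic --- the matrices depend on the dimer model and the chart, and there is no generic computation to fall back on. The paper handles this by decomposing the complex of sections according to path degree into a direct sum of complexes of vector spaces each of dimension at most one, and then running a genuinely combinatorial argument in the universal cover (the cone $\varsigma$ of rays on which $\deg_\rho\omega p=0$, and the observation that the arrows $b_i$ into vertex $0$ that are nonzero on $Y_\varsigma$ form a \emph{segment}, forced by commutativity of $e_0Ae_0$ and the shape of the relations) to see that each graded piece is either exact or concentrated in the single summand $\kk\,\omega p$ at the top. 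Some such argument is unavoidable, and your proposal does not contain it.

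A secondary issue: in computing $H^3=R/J$ you equivocate between two a priori different ideals. Your containment $J\subseteq I$ uses $I=\sqrt{\mathfrak{m}_{x_0}R}$ (functions vanishing on the set $\tau^{-1}(x_0)\cap U_\sigma$), while your containment $I\subseteq J$ uses $I=\mathfrak{m}_{x_0}R$ (the scheme-theoretic fibre ideal). What you have actually shown is $\mathfrak{m}_{x_0}R\subseteq J\subseteq\sqrt{\mathfrak{m}_{x_0}R}$, which does not determine $J$ unless one already knows the fibre ideal is radical. The paper's graded computation identifies $R/J$ with the span of the weak cycles that are nonzero at some point of the fibre, i.e.\ with the \emph{reduced} coordinate ring, so in particular it proves $J$ is radical as a byproduct; your argument would need an additional step to close this gap.
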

\begin{proof}
Let $\sigma\subseteq \Sigma$ be a cone defining the toric chart $U_\sigma$. The ring of sections of $\mathscr{O}_{\tau^{-1}(x_0)}$ restricted
to $U_\sigma$ is the quotient of $\Gamma(U_\sigma,\mathscr{O}_Y)=e_0\widehat A_\sigma e_0$ by the sections that are zero on ${\tau^{-1}(x_0)}\cap U_\sigma$. The quotient is path graded, so we can write this quotient as a direct sum of one-dimensional vector spaces, one for each weak path in $e_0\widehat A_\sigma e_0$ that is nonzero for some representation in $\tau^{-1}(x_0)$:
\[
\Gamma(U_\sigma, \mathscr{O}_{\tau^{-1}(x_0)}) 
= \bigoplus_{
\substack{p  \in e_0 A_\sigma e_0\\
\exists V \in  \tau^{-1}(x_0)\cap U_\sigma \mid v_p\neq 0}} 
\kk p.
\]
On the other hand $\Gamma(U_\sigma, \Psi(S_0)^\vee)$ also decomposes as a direct sum according to path degree: for each weak path $p\in e_0\widehat{A}e_0$ we have 
$$
\Gamma(U_\sigma,\Psi(S_0)^\vee)_p := \Bigg(  
(e_0\widehat A_\sigma e_0)_{\omega p}  \xlongleftarrow{b \cdot } \bigoplus_{b\colon h\to 0} (e_{h}\widehat A_\sigma e_0)_{\omega_bp} \xlongleftarrow{\omega_{ba} \cdot } \bigoplus_{a\colon 0\to j} (e_j\widehat A_\sigma e_0)_{ap} \xlongleftarrow{a \cdot } \underline{(e_0\widehat A_\sigma e_0)_p} \Bigg).
$$
Here $\omega p$ is the path obtained by adjoining a boundary cycle to $p$ 
and $\omega_b=\omega b^{-1}$. Each of the terms $(e_i \widehat A_\sigma e_j)_r$ is a $\kk$-vector space of dimension one or zero, depending on whether or not $r\in e_i \widehat A_\sigma e_j$.  Now, $p \in \widehat A_\sigma$ implies that $\omega p \in \widehat A_\sigma$, so we have that the whole complex disappears if $\omega p \notin \widehat A_\sigma$. We may therefore assume that $\omega p \in \widehat A_\sigma$.

We are now going to show that if $\omega p$ is nonzero for some representation $V \in \tau^{-1}(x_0)\cap U_\sigma$, then the complex $\Gamma(U_\sigma,\Psi(S_0)^\vee)_p$ is concentrated in degree $3$ and consists only of $\kk \omega p[3]$. If $\omega p$ is zero on $\tau^{-1}(x_0)\cap U_\sigma$ we will show that the homology of the complex is zero. From this we can then conclude that
\[
\Gamma(U_\sigma,\Psi(S_0)^\vee)\cong 
\bigoplus_{\substack{\omega p  \in e_0 A_\sigma e_0\\
\exists V \in  \tau^{-1}(x_0)\cap U_\sigma \mid v_{\omega p}\neq 0}} \kk \omega p[3].
\]
Renaming $\omega p$ as $p$, we see that this is equal to
$\Gamma(U_\sigma, \mathscr{O}_{\tau^{-1}(x_0)})[3]$ and we are done.

Suppose that $\omega p$ is nonzero for $V \in \tau^{-1}(x_0)\cap U_\sigma$. Now, $V$ sits in some torus orbit $Y_\varsigma$ with $\varsigma \subset \sigma$. By Remark~\ref{rem:cosupp}(3) we have that $\deg_\rho \omega p = 0$ for all $\rho \subset \varsigma$. Combining Lemma~\ref{lem:specialtheta} with Remark~\ref{rem:cosupp}(3) we get that for every arrow $b\colon h\to 0$ we can find a $\rho \subset \varsigma$ such that $\deg_\rho \omega_b p= \deg_\rho \omega p - \deg_\rho b=0-1<0$. Therefore all $(e_{h}\widehat A_\sigma e_0)_{\omega_bp}=0$ and the only nonzero term in 
$\Gamma(U_\sigma,\Psi(S_0)^\vee)_p$ is $(e_0\widehat A_\sigma e_0)_{\omega p}$.

If $\omega p$ is zero on $\tau^{-1}(x_0)\cap U_\sigma$ then 
we define a cone $\varsigma$  which is spanned by the 
$\rho \subset \sigma$ for which $\deg_\rho \omega p=0$. 
Now, $\omega_p$ is nonzero on the torus orbit of this cone, so $Y_\varsigma \not\subset \tau^{-1}(x_0)$.

If $\varsigma=0$ then $\deg_\rho p\ge 0$ for all $\rho\subset \sigma$ and
$p \in \widehat A_\sigma$. All the terms in the complex $\Gamma(U_\sigma,\Psi(S_0)^\vee)$ remain, so the complex has zero homology.

If $\varsigma\ne 0$ then by Lemma~\ref{lem:specialtheta} there must be at least one arrow arriving in the vertex $0$ which is nonzero on $Y_\varsigma$. If two nonzero arrows $b_0,b_1$ have head at 0, then the stability of the representations in $Y_\varsigma $ implies 
we can find nonzero paths $p_0,p_1$ such that $b_0p_0$ and $b_1p_1$ are nonzero cycles through $0$. The cycles
$b_0p_0b_1p_1$ and $b_1p_1b_0p_0$ are equivalent in $A$ (because $e_0Ae_0\cong Z(A)$ is commutative), so there is a sequence of relations that connects one to the other. This means that there is a sequence of equivalent paths $b_0p_0b_1p_1=q_0,\dots, q_u=b_1p_1b_0p_0$ such that each one is constructed
from the previous by flipping over a boundary cycle. 
All arrows $b_i$ inside the area bounded by the lifts of $b_0p_0b_1p_1$ and $b_1p_1b_0p_0$ in the universal cover must occur at 
the end of some of these paths $q_l=rb_i$, so these $b_i$ must be nonzero (see Figure~\ref{fig:PathsPictures} on the left). 
 Similarly the $\omega_{ab}$ inside this area must be nonzero because they must occur in $r$'s, otherwise we could not apply any relations $\langle \partial_a W \mid a\in \qpol_1\rangle$. However, if $\varsigma$ is nonzero, then not all $b_i$ can be nonzero otherwise all $\omega_{ab}$ would be nonzero and yet all $a$ must be zero, contradicting stability of the representations in $Y_{\varsigma}$. It follows that the $b_i$ that are nonzero on $Y_{\varsigma}$ form a segment and those are the terms that do not disappear in the third column. The terms that do not disappear in the second column are those that are behind two nonzero arrows $b_i, b_{i+1}$. The first column always disappears.  The connected subdiagram is as shown in Figure~\ref{fig:PathsPictures} on the right. 
\begin{figure}[ht!]
\centering
\[
 \xymatrix@=1.3cm{
\vtx{}\ar@{.>}[rr]^{p_0}&&\vtx{}\ar@{->}[r]^{b_0}&\vtx{}\\
\vtx{}\ar@{->}[u]^{b_1}&&\vtx{}\ar@{->}[ur]^{b_i}&\vtx{}\ar@{->}[u]^{b_1}\ar@{<.}[dd]_{p_1}\\
 &&&\\
\vtx{}\ar@{.>}[rr]^{p_0}\ar@{.>}[ruru]^{r}\ar@{.>}[uu]^{p_1}&&\vtx{}\ar@{->}[r]^{b_0}&\vtx{}}
\hspace{2cm}
\xymatrix@=.1cm{
 &&\cL{e_{\tail(b_i)}}{e_0}\ar[rrddd]&&\\
 \cL{e_{\head(a_i)}}{e_0}\ar[rru]\ar[rrd]&&&&\\
 &&\cL{e_{\tail(b_{i+1})}}{e_0}\ar[rrd]&&\\
\vdots \ar[rru]\ar[rrd]&&\vdots && \cL{e_0}{e_0}\\ 
 &&\cL{e_{\tail(b_{j-1})}}{e_0}\ar[rru]&&\\
\cL{e_{\head(a_{j-1})}}{e_0}\ar[rru]\ar[rrd]&&&&\\
 &&\cL{e_{\tail(b_j)}}{e_0}\ar[rruuu]&&
 }
 \]
\caption{A connected subdiagram in the quiver $\qpol$}
\label{fig:PathsPictures}
\end{figure}
It follows that the complex has trivial homology as claimed.
\end{proof}

To state the main result of this section, let $S$ be any connected scheme of finite type over $\kk$ with structure morphism $f_S\colon S\to \Spec(\kk)$. Assume further that $S$ is Cohen--Macaulay. Recall from \cite{Hartshorne} that the dualising sheaf of $S$ is $\gbf{\omega}_S:= f_S^{!}(\mathscr{O}_{\Spec(\kk)})$. 
If $S$ is smooth and equidimensional, we have $\gbf{\omega}_S = \omega_S[\dim S]$ where $\omega_S$ is the canonical line bundle of $S$. Given a closed immersion $\iota_F\colon F\hookrightarrow S$, the structure morphism of $F$ is simply $f_Y\circ \iota_F$ and 
hence $\gbf{\omega}_F = {\iota_F}^{!} \gbf{\omega}_S$.

\begin{proposition}
\label{prop:PsiS0}
The object $\Psi(S_0)$ is quasi-isomorphic to the dualizing complex of $\tau^{-1}(x_0)$. 
\end{proposition}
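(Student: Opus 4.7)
The plan is to deduce Proposition~\ref{prop:PsiS0} formally from Proposition~\ref{prop:dualPsiS0} by taking derived duals and invoking Grothendieck--Serre duality for the closed immersion $\iota\colon \tau^{-1}(x_0)\hookrightarrow Y$.

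First, since $Y=\Mcal_{\vartheta}$ is a smooth three-dimensional Calabi--Yau variety (Theorem~\ref{thm:ishiiueda08} together with crepancy of $\tau$), we have $\gbf{\omega}_Y \cong \mathscr{O}_Y[3]$. Smoothness also ensures that every object of $D^b(\coh(Y))$ is perfect, so the derived dual $(-)^\vee := \mathbf{R}\mathcal{H}om_{\mathscr{O}_Y}(-,\mathscr{O}_Y)$ is an involutive anti-equivalence on $D^b(\coh(Y))$; in particular $\Psi(S_0)\cong \Psi(S_0)^{\vee\vee}$. Applying $(-)^\vee$ to the quasi-isomorphism of Proposition~\ref{prop:dualPsiS0} therefore reduces the problem to computing the derived dual of $\iota_*\mathscr{O}_{\tau^{-1}(x_0)}$.

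For the latter we invoke Grothendieck duality for the closed immersion $\iota$, which yields
\[
\mathbf{R}\mathcal{H}om_{\mathscr{O}_Y}\bigl(\iota_*\mathscr{O}_{\tau^{-1}(x_0)},\mathscr{O}_Y\bigr) \cong \iota_*\mathbf{R}\mathcal{H}om_{\mathscr{O}_{\tau^{-1}(x_0)}}\bigl(\mathscr{O}_{\tau^{-1}(x_0)},\iota^!\mathscr{O}_Y\bigr) \cong \iota_*\iota^!\mathscr{O}_Y.
\]
Since $\mathscr{O}_Y\cong \gbf{\omega}_Y[-3]$ and $\iota^!\gbf{\omega}_Y \cong \gbf{\omega}_{\tau^{-1}(x_0)}$ (as recalled explicitly before the statement), this becomes $\iota_*\gbf{\omega}_{\tau^{-1}(x_0)}[-3]$; combining this with the shift from Proposition~\ref{prop:dualPsiS0} then delivers the required quasi-isomorphism $\Psi(S_0)\cong \iota_*\gbf{\omega}_{\tau^{-1}(x_0)}$, i.e., $\Psi(S_0)$ is the dualising complex of the compact exceptional locus $\tau^{-1}(x_0)$.

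The argument is purely formal once Proposition~\ref{prop:dualPsiS0} is in hand, so the main item to verify carefully is the bookkeeping of shifts: the shift by $3$ arises once from the Calabi--Yau property $\gbf{\omega}_Y\cong \mathscr{O}_Y[3]$ and once from Proposition~\ref{prop:dualPsiS0}, and the two cancel after dualisation. No substantive geometric obstacle arises, because smoothness of $Y$ trivialises both the perfectness needed for the double-dual identification and the use of the isomorphism $\iota^!\gbf{\omega}_Y\cong \gbf{\omega}_{\tau^{-1}(x_0)}$.
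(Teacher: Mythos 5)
Your argument is correct and is essentially the proof the paper gives: dualise Proposition~\ref{prop:dualPsiS0}, apply Grothendieck duality for the closed immersion $\iota_F\colon \tau^{-1}(x_0)\hookrightarrow Y$ to identify $\mathbf{R}\mathcal{H}om({\iota_F}_*\mathscr{O}_{\tau^{-1}(x_0)},\mathscr{O}_Y)$ with ${\iota_F}_*{\iota_F}^{!}\mathscr{O}_Y$, and use $\gbf{\omega}_Y\cong\mathscr{O}_Y[3]$ (smoothness plus crepancy) together with ${\iota_F}^{!}\gbf{\omega}_Y\cong\gbf{\omega}_{\tau^{-1}(x_0)}$ so that the two shifts cancel. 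Your explicit remarks on perfectness and the double-dual identification $\Psi(S_0)\cong\Psi(S_0)^{\vee\vee}$ only make precise what the paper leaves implicit.
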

\begin{proof}
Let $F=\tau^{-1}(x_0)$ denote the fibre over the torus-invariant point of $X$. Taking the derived dual of the quasi-isomorphism from Proposition~\ref{prop:dualPsiS0} gives 
 \[
 \Psi(S_0) \cong \mathbf{R}\mathcal{H}om({\iota_{F}}_* \mathscr{O}_F,\mathscr{O}_Y)[3].
 \]
Grothendieck duality for the proper morphism $\iota_F\colon F\to Y$ \cite[VII, Corollary 3.4(c)]{Hartshorne} gives 
\[ 
\Psi(S_0) \cong {\iota_{F}}_* \mathbf{R}\mathcal{H}om(\mathscr{O}_F,{\iota_F}^{!}\mathscr{O}_Y)[3]\cong  {\iota_{F}}_*{\iota_F}^{!}\mathscr{O}_Y[3]\cong  {\iota_{F}}_*{\iota_F}^{!}\gbf{\omega}_Y\cong  {\iota_{F}}_*\gbf{\omega}_F,
\]
where smoothness of $Y$ and crepancy of $\tau$ give $\gbf{\omega}_Y\cong \mathscr{O}_Y[3]$. 
\end{proof}

\begin{corollary}
The object $\Psi(S_0)$ is a pure sheaf if and only if $\tau^{-1}(x_0)$ is equidimensional.
\end{corollary}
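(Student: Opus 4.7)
The plan is to use Proposition~\ref{prop:PsiS0} to replace $\Psi(S_0)$ by $\iota_{F\,*}\gbf{\omega}_F$, where $F:=\tau^{-1}(x_0)$ and $\iota_F\colon F\hookrightarrow Y$ is the closed immersion. Since $\iota_{F\,*}$ is exact on coherent sheaves, the corollary amounts to the following claim: the dualising complex $\gbf{\omega}_F$ is quasi-isomorphic to a coherent sheaf shifted into a single cohomological degree if and only if $F$ is equidimensional. I would then establish the two directions separately, using the identification
\[
\gbf{\omega}_F \;\cong\; \mathbf{R}\mathcal{H}om_Y\!\big(\iota_{F\,*}\mathscr{O}_F,\mathscr{O}_Y\big)[3]
\]
coming from Grothendieck duality applied to $\iota_F$ together with crepancy of $\tau$, exactly as in the proof of Proposition~\ref{prop:PsiS0}.

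For the implication ($\Leftarrow$), suppose $F$ is equidimensional of dimension $d\in\{0,1,2\}$. I would first argue that $F$ is Cohen--Macaulay in each case. For $d=2$, $F$ is a subscheme of pure codimension one in the smooth toric 3-fold $Y$, hence a Cartier divisor, which is automatically Cohen--Macaulay; for $d=0$ this is trivial; and for $d=1$ the scheme $F$ is a pure 1-dimensional toric subscheme of $Y$, which is Cohen--Macaulay once we check it has no embedded components (a local toric statement at the singular points of the curve). With $F$ then Cohen--Macaulay of pure dimension $d$, the $\mathcal{E}xt$-sheaves $\mathcal{E}xt^k_Y(\iota_{F\,*}\mathscr{O}_F,\mathscr{O}_Y)$ vanish for $k\neq 3-d$, so $\gbf{\omega}_F\cong\omega_F[d]$ is a pure sheaf and hence so is $\Psi(S_0)\cong \iota_{F\,*}\omega_F[d]$.

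For the implication ($\Rightarrow$), assume $F$ is not equidimensional and pick irreducible components $F_1,F_2$ of $F$ with $\dim F_1=d_1>d_2=\dim F_2$. I would localise at the generic point $\eta_i$ of $F_i$: then $\mathscr{O}_{Y,\eta_i}$ is a regular local ring of dimension $3-d_i$, and $(\iota_{F\,*}\mathscr{O}_F)_{\eta_i}$ is a nonzero finite-length module over it. A standard computation (Auslander--Buchsbaum together with local duality) shows that $\Ext^{3-d_i}_{\mathscr{O}_{Y,\eta_i}}\!\big((\iota_{F\,*}\mathscr{O}_F)_{\eta_i},\mathscr{O}_{Y,\eta_i}\big)$ is nonzero while the lower Ext groups vanish. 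Translating back, $H^{-d_i}(\gbf{\omega}_F)$ has nonzero stalk at $\eta_i$ for $i=1,2$, giving nonzero cohomology of $\gbf{\omega}_F$ in two distinct degrees $-d_1\neq -d_2$, so $\Psi(S_0)$ is not a pure sheaf.

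The main obstacle is the Cohen--Macaulay verification in the 1-dimensional case: one must rule out embedded points in the scheme-theoretic fibre $\tau^{-1}(x_0)$, which I expect follows from the explicit toric description of the fibre provided by Lemma~\ref{lem:fan} and Remark~\ref{rem:cosupp}, combined with the combinatorial characterisation of $\tau^{-1}(x_0)$ in Lemma~\ref{lem:specialtheta}\two.
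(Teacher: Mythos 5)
Your reduction to a statement about the dualising complex of $F=\tau^{-1}(x_0)$ is sound, and your ``only if'' direction is correct and complete: localising $\mathbf{R}\mathcal{H}om_Y(\iota_{F*}\mathscr{O}_F,\mathscr{O}_Y)[3]$ at the generic points of two components of different dimensions does produce nonzero cohomology of $\gbf{\omega}_F$ in two distinct degrees. This is a genuinely different, and in that direction arguably cleaner, argument than the paper's, which instead follows \cite[Proposition~5.6]{CautisCrawLogvinenko12} verbatim to write down \emph{all} the cohomology sheaves explicitly, namely $H^{-2}(\Psi(S_0))=\mathscr{O}_{F_2}\otimes\mathscr{O}_Y(F_2)$, $H^{-1}(\Psi(S_0))=\omega_{F_1}(F_2)$ and $H^{-i}(\Psi(S_0))=0$ otherwise, where $F_d$ denotes the union of the $d$-dimensional components; the corollary is then read off from the supports of these sheaves.

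The gap is in your ``if'' direction, and it is not confined to the curve case you flag. Equidimensionality constrains only the irreducible (minimal) components of the scheme-theoretic fibre $F$; it does not exclude embedded components. Your step ``pure codimension one in a smooth $3$-fold, hence a Cartier divisor'' already presupposes their absence: the subscheme $V(x^2,xy)\subset\mathbb{A}^3$ is equidimensional of dimension two, yet it is not Cartier, fails to be Cohen--Macaulay along the embedded line $V(x,y)$, and its dualising complex has nonvanishing cohomology in two degrees. So in both the $d=2$ and the $d=1$ cases the entire content of the implication is the assertion that $\tau^{-1}(x_0)$, with its scheme structure, has no embedded components (equivalently, is Cohen--Macaulay once equidimensional), and your proposal defers exactly this point. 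The paper does not prove it by an abstract criterion either: it emerges as a by-product of the explicit computation imported from \cite{CautisCrawLogvinenko12}, which identifies the divisorial part of $F$ as a Cartier divisor and the residual one-dimensional piece as a Cohen--Macaulay curve before dualising. To close your argument you would need to carry out that analysis, or an equivalent toric one using Lemma~\ref{lem:specialtheta} and the chartwise description of $\mathscr{O}_{\tau^{-1}(x_0)}$ appearing in the proof of Proposition~\ref{prop:dualPsiS0}; at that point the ``if'' direction of your proof essentially coincides with the paper's.
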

\begin{proof}
For $d\in \{1,2\}$, let $F_d$ denote the subscheme of $F=\tau^{-1}(x_0)$ defined by the union of the $d$-dimensional irreducible components. Following the approach of \cite[Proposition~5.6]{CautisCrawLogvinenko12} verbatim, we deduce from Proposition~\ref{prop:PsiS0} that:
\begin{enumerate}
\item[\one] $H^{-2}(\Psi(S_0)) = \mathscr{O}_{F_2}\otimes \mathscr{O}_Y(F_2)$;
\item[\two] $H^{-1}(\Psi(S_0)) = \omega_{F_1}(F_2)$;
\item[\three] $H^{-i}(\Psi(S_0)) = 0$ for $i\neq 1,2$.
\end{enumerate}
This gives $\supp(H^{-i}(\Psi(S_0))) = \supp(F_i)$ for $i=1,2$ and $\supp(H^{-i}(\Psi(S_0))) = \emptyset$ otherwise.
\end{proof}

\section{The main result for nonzero vertices}
In this section we compute explicitly the object $\Psi(S_i)$ for any nonzero vertex $i\in \qpol$ whenever it has nonvanishing cohomology in degree zero. The key observation is that the support of the cohomology sheaf $H^0(\Psi(S_i))$ coincides with the unstable locus for an especially simple type of wall of the GIT chamber defining the moduli space $Y=\mathcal{M}_\vartheta$. This enables us to complete the proof of Theorem~\ref{thm:intromain1}.

\subsection{Stability conditions and the Grothendieck group}
Identify the space of dimension vectors $\ZZ_{\qpol_0}$ with the free abelian group $\bigoplus_{i\in \qpol_0} \mathbb{Z}S_i$ generated by the vertex simple $A$-modules. The vector space of stability conditions on the quiver $\qpol$ becomes
\[
\Theta:=
\Bigg\{\theta\colon \bigoplus_{i\in \qpol_0} \mathbb{Z}S_i\to \QQ \,\bigg|\, \textstyle{\sum_{i\in \qpol_0} \theta(S_i)} = 0\Bigg\}.
\]
For $i\in \qpol_0$, it is convenient to introduce the hyperplane 
\[
S_i^\perp:= \{\theta\in \Theta \mid \theta(S_i)=0\}.
\]

We now express $\Theta$ in terms of the  Grothendieck group $K(\modA)$ of the abelian category $\modA$. Consider the bilinear form $\chi\colon K(\modA)\times  \bigoplus_{i\in \qpol_0} \mathbb{Z}S_i\longrightarrow \ZZ$ defined by setting 
\begin{equation}
\label{eqn:pairing}
\chi(\alpha,\beta) = \sum_{k\in \ZZ} (-1)^k\dim \Ext_A^k(\alpha,\beta)
\end{equation}
for $\alpha\in K(\modA)$ and $\beta\in \bigoplus_{i\in \qpol_0} \mathbb{Z}S_i$. 
Since $A$ has finite global dimension, the isomorphism classes of the indecomposable projective modules $\{[P_i] \mid i\in \qpol_0\}$ generate $K(\modA)$. Moreover, for  $i, j\in \qpol_0$ and $k\in \ZZ$, we have $\Ext^k_A(P_i,S_j)=\kk$ for $i=j$ and $k=0$, and $\Ext^k_A(P_i,S_j) = 0$ otherwise. It follows that \eqref{eqn:pairing} is a perfect pairing. In particular, $K(\modA)$ is finitely generated and free with basis $\{[P_i] \mid i\in \qpol_0\}$ and the space of stability conditions is
\[
\Theta=\Big\{\textstyle{\sum_{i\in \qpol_0} \theta_i[P_i]\in K(\modA)\otimes_\ZZ \QQ \,\big|\, \sum_{i\in \qpol_0} \theta_i=0}\Big\}.
\]
\begin{remark}
With this description of stability conditions, we have that:
\begin{enumerate}
\item $\theta=\sum_{j\in \qpol_0} \theta_j[P_j]\in \Theta$ lies in the hyperplane $S_i^\perp$ for $i\in \qpol_0$ if and only if $\theta_i=0$.
\item the special stability condition defining $Y=\mathcal{M}_\vartheta$ is simply 
\[
\vartheta = \big(1-\vert\qpol_0\vert\big) [P_0] + \sum_{i>0} [P_i].
\]
\end{enumerate}
\end{remark}

\subsection{The wall and chamber structure}
We now recall the consequences for the birational geometry of the fine moduli space $Y=\mathcal{M}_\vartheta$ as we vary the stability parameter in $\Theta$. Recall that a stability parameter $\theta\in \Theta$ is said to be generic if every $\theta$-semistable $A$-module of dimension vector $\underline{1}=\sum_{i\in \qpol_0} S_i$ is $\theta$-stable. As is standard in GIT, the chamber $C\subset \Theta$ containing a generic parameter $\theta\in \Theta$ is defined to be the set of all generic $\theta^\prime\in\Theta$ such that every $\theta$-stable $A$-module of dimension vector $\underline{1}$ is $\theta^\prime$-stable and vice-versa. 

The following result is well known. In fact it differs from the statement of \cite[Lemma~3.1]{CrawIshii04} or \cite[Remark~6.3]{IshiiUeda13} only in the observation that the chambers can be described purely in terms of torus-invariant $A$-modules; this observation was well known to the authors of those papers.

\begin{lemma}
\label{lem:ThetaFan}
The space of stability parameters $\Theta$ supports a polyhedral fan such that the GIT chambers are precisely the interiors of all top-dimensional cones in the fan. Moreover, every chamber is of the form
\[
C= \left\{\theta^\prime\in \Theta \mid \begin{array}{cc} \theta^\prime(S)>0 \text{ for every nontrivial submodule }S \text{ of every}\\ \text{ torus-invariant }\theta\text{-stable }A\text{-module of dimension vector }\underline{1}\end{array}\right\}
\]
for some generic parameter $\theta\in \Theta$.
\end{lemma}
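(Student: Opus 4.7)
The plan is to deduce the statement from the standard wall-and-chamber description of GIT chambers, then refine it using the torus action on $Y$. For any $\theta$-stable $A$-module $V$ of dimension vector $\underline{1}$ and any proper nontrivial submodule $S\subset V$, the dimension vector of $S$ lies in $\{0,1\}^{\qpol_0}$, so there are at most $2^{\vert \qpol_0\vert}-2$ possible types of wall, each of the form $\underline{d}^\perp \subset \Theta$ for some $\underline{d}\in\{0,1\}^{\qpol_0}\setminus\{0,\underline{1}\}$. The chamber $C$ containing a generic $\theta$ is cut out by finitely many strict inequalities $\theta'(\underline{d})>0$, hence is the relative interior of a rational polyhedral cone; varying $\theta$ across $\Theta$ uses only finitely many such hyperplanes, so these assemble into a polyhedral fan whose top-dimensional cones are the chambers.

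The real content of the lemma is the claim that one only needs torus-invariant $\theta$-stable modules to read off the wall conditions. To establish this, I would fix a generic $\theta$ and, for each $\underline{d}\in\{0,1\}^{\qpol_0}\setminus\{0,\underline{1}\}$, introduce
\[
Z_{\underline{d}} := \{y\in Y \mid V_y \text{ admits a subrepresentation of dimension vector } \underline{d}\}.
\]
Upper semicontinuity of dimensions of $\Hom$-spaces in the tautological family over $Y$ makes $Z_{\underline{d}}$ closed, and since the $(\kk^*)^3$-action on $Y=\mathcal{M}_\theta$ rescales arrows it preserves subrepresentations (and their dimension vectors), so $Z_{\underline{d}}$ is torus-invariant. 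The key geometric input is that every nonempty torus-invariant closed subset of $Y$ meets the torus-fixed locus; this holds because the fan $\Sigma$ of $Y$ from Lemma~\ref{lem:fan} is the cone over a regular triangulation of $P$, so every cone is a face of a three-dimensional cone, and hence every orbit closure in $Y$ contains a torus-fixed point. Consequently, if $Z_{\underline{d}}$ is nonempty then there is a torus-invariant $y_0\in Y$ whose $\theta$-stable module $V_{y_0}$ already realises $\underline{d}$ as a submodule dimension vector.

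Assembling these observations, a generic $\theta'\in\Theta$ lies in $C$ if and only if $\theta'(\underline{d})>0$ for every dimension vector $\underline{d}$ of a nontrivial proper submodule of some torus-invariant $\theta$-stable $A$-module of dimension vector $\underline{1}$, which is exactly the desired description. The main obstacle I foresee is the torus-fixed-point-in-closed-subset step: one must check both that the toric $(\kk^*)^3$-action on the GIT quotient $Y$ really is the one induced by rescaling arrows on quiver representations (a standard, but nontrivial, consequence of the equivariant structure of the construction), and that the noncompactness of $Y$ does not prevent the descent to torus-fixed points. The second point is handled by the combinatorial structure of $\Sigma$, while the first is a routine verification via the universal property of $\mathcal{M}_\theta$.
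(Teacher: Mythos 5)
Your reduction to torus-invariant modules is sound and is a legitimate variant of the paper's argument: where you take the locus $Z_{\underline{d}}$, observe it is closed and torus-invariant, and conclude it meets the fixed locus (which is valid here, since every cone of $\Sigma$ is a face of a three-dimensional cone), the paper instead degenerates $V_y$ along its torus orbit to the torus-invariant module $V_0$ of an ambient chart and notes that submodules survive because $V_0$ is obtained by setting maps to zero. Either route establishes that the cone cut out by the torus-invariant inequalities equals the cone $C'$ cut out by the inequalities coming from \emph{all} $\theta$-stable modules, i.e.\ that $\theta'\in C$ implies every $\theta$-stable module is $\theta'$-stable.

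However, there is a genuine gap at the final step. Knowing that every $\theta$-stable module is $\theta'$-stable for $\theta'\in C$ only gives an open immersion $\mathcal{M}_\theta\hookrightarrow\mathcal{M}_{\theta'}$; it does not show that $\theta'$ lies in the GIT chamber of $\theta$, which by definition also requires the converse (``and vice-versa''): every $\theta'$-(semi)stable module must be $\theta$-stable, and every $\theta'\in C$ must be generic. A priori $C$ could strictly contain the chamber, with new stable objects appearing for some $\theta'\in C$. Your ``if and only if'' in the last paragraph silently assumes this converse. The paper closes this gap with a non-elementary argument: it shows $\mathcal{M}_\theta=\overline{\mathcal{M}_{\theta'}}$ by applying the derived equivalence $\Phi_\theta$ and the argument of \cite[\S8]{BKR01} to a putative strictly $\theta'$-semistable point (following \cite[Lemma~3.1]{CrawIshii04} and \cite[Remark~6.3]{IshiiUeda13}), and only then deduces genericity of every $\theta'\in C$ and equality of the stable loci. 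Some input of this kind (properness of both moduli over $X$ plus irreducibility, or the Fourier--Mukai argument) is unavoidable, and it is the one obstacle your proposal does not foresee. Relatedly, your first paragraph's claim that the cones ``assemble into a polyhedral fan'' also presupposes that the chambers are exactly the maximal cells of the hyperplane arrangement, which again rests on this missing equivalence; the paper instead obtains the fan structure by identifying $\mathcal{M}_\theta$ with the coherent component $Y_\theta$ and quoting the toric GIT fact.
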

\begin{proof}
For generic $\theta\in \Theta$, the toric variety $\mathcal{M}_\theta$ has finitely many torus-invariant points, so there are finitely many torus-invariant $\theta$-stable $A$-modules of dimension vector $\underline{1}$, each of which has only finitely many submodules. Thus, the nonempty open cone $C$ is cut out by finitely many strict linear inequalities, so its closure $\overline{C}\subset \Theta$ is a rational polyhedral cone of full dimension. 

To prove that $C$ is the GIT chamber containing $\theta$, we proceed in three steps. First we prove that every $\theta$-stable $A$-module is $\theta^\prime$-stable for $\theta^\prime\in C$. Equivalently, we must show that $C$ equals
\[
C^\prime= \left\{\theta^\prime\in \Theta \mid \begin{array}{cc} \theta^\prime(S)>0 \text{ for every nontrivial submodule }S \text{ of} \\ \text{every }\theta\text{-stable }A\text{-module of dimension vector }\underline{1}\end{array}\right\}.
\]
One inclusion is obvious. For the other let $\theta^\prime\in C$ and let $S\subseteq V_y$ denote a submodule of the $\theta$-stable $A$-module corresponding to a point $y\in Y$. The torus-action induces an isomorphism between $V_y$ and the $\theta$-stable $A$-module $V_{y^\prime}$ defined by the distinguished point $y^\prime$ in the torus-orbit containing $y$, so $V_{y^\prime}$ has a submodule $S^\prime$ that's isomorphic to $S$. Since the torus-invariant $\theta$-stable $A$-module $V_0$ for any toric chart containing this torus orbit is obtained from $V_{y^\prime}$ by setting certain maps to zero, it follows that any submodule $S^\prime\subseteq V_{y^\prime}$ is isomorphic to a submodule $S_0\subseteq V_0$. In particular, $\theta^\prime(S) = \theta^\prime(S^\prime) = \theta^\prime(S_0) > 0$ because $\theta^\prime\in C$, giving $C\subseteq C^\prime$ and hence $C^\prime = C$. This completes step one, and implies that $\mathcal{M}_{\theta}$ is an open subset of $\mathcal{M}_{\theta^\prime}$ for $\theta^\prime\in C$.  Our second step is to deduce that $\mathcal{M}_{\theta} = \overline{\mathcal{M}_{\theta^\prime}}$ for $\theta^\prime\in C$, and for this we merely sketch the argument following \cite[Lemma~3.1]{CrawIshii04} or \cite[Remark~6.3]{IshiiUeda13}. Suppose there exists $y\in \overline{\mathcal{M}_{\theta^\prime}}$ such that a representative $V_y$ of the corresponding S-equivalence class of $\theta^\prime$-semistable $A$-modules is not $\theta$-stable. Since $\theta$ is generic by assumption, we may use the derived equivalence $\Phi_\theta$ induced by the tautological bundle on $\mathcal{M}_{\theta}$. Applying the argument of \cite[\S8]{BKR01} to any object $E\in D^b(\coh(\mathcal{M}_\theta))$ satisfying $\Phi_\theta(E) = V_y$ leads to a contradiction, giving $\mathcal{M}_{\theta} = \overline{\mathcal{M}_{\theta^\prime}}$ as claimed. The third step is to observe that the equality of step two implies that $\mathcal{M}_{\theta^\prime} = \overline{\mathcal{M}_{\theta^\prime}}$, so every $\theta^\prime\in C$ is generic and, moreover, that $\mathcal{M}_{\theta^\prime} = \mathcal{M}_{\theta}$, so every $\theta^\prime$-stable $A$-module is $\theta$-stable for $\theta^\prime\in C$. In other words,  $C$ is indeed the GIT chamber containing $\theta$ as required.

It remains to prove that the faces of all possible cones of the form $\overline{C}\subset \Theta$ for some generic $\theta$ define a fan in $\Theta$. For generic $\theta$, the moduli space $\mathcal{M}_\theta$ is isomorphic to its coherent component $Y_\theta$ (see Remark~\ref{rem:ishiiueda08}), so the chamber $C$ containing $\theta$ coincides with one of the GIT chambers for the torus-action on the affine toric variety that defines $Y_\theta$ \cite[Proposition~2.14]{CrawQuinterovelez12}. It is well known that the wall and chamber structure in the set of GIT stability parameters for a torus-action on an affine toric variety determine a fan.
  \end{proof}

\begin{definition}
The \emph{GIT chamber decomposition} for the consistent dimer model $\qpol$ is the fan from Lemma~\ref{lem:ThetaFan}. A \emph{chamber} $C$ in $\Theta$ is the interior of any top-dimensional cone in the fan, and a \emph{wall} of a chamber $C$ is any codimension-one face of the closure $\overline{C}$.
\end{definition}

Let $C, C^\prime\subset \Theta$ be adjacent chambers separated by a wall $W=\overline{C}\cap \overline{C^\prime}$, and choose parameters $\theta\in C$, $\theta^\prime\in C^\prime$ and $\theta_0\in W$ in the relative interior of the wall. Since $\theta_0$ lies in the closure of $C$, the representations $V_y$ for $y\in Y$ are all $\theta_0$-semistable, and the natural projective morphism obtained by variation of GIT quotient 
\[
f\colon \mathcal{M}_\theta\longrightarrow \overline{\mathcal{M}_{\theta_0}}
\]
sends $y\in \mathcal{M}_\theta$ to the S-equivalence class of $V_y$ in the category of $\theta_0$-semistable representations. Explicitly, $f(y)=f(y^\prime)$ for distinct points $y, y^\prime\in Y$ if and only if the corresponding $\vartheta$-stable $A$-modules $V_y, V_{y^\prime}$ are strictly $\theta_0$-semistable, and each contains a $\theta_0$-stable submodule $S\subset V_y$ and $S^\prime\subset V_{y^\prime}$ such that both submodules and quotient modules are isomorphic:
 \begin{equation}
 \label{eqn:destabilisingGeneral}
 \begin{CD}   
    0@>>> S @>>> V_y @>>> V_y/S @>>> 0 \\
     @. @VV{\cong}V @. @VV{\cong}V @. \\
   0@>>> S^\prime @>>> V_{y^\prime} @>>> V_{y^\prime}/S^\prime @>>> 0.
 \end{CD}
\end{equation}

 \begin{definition}
 The \emph{unstable locus} in $\mathcal{M}_\theta$ for the wall $W$ containing $\theta_0$ in its relative interior is the locus in $\mathcal{M}_\theta$ parametrising strictly $\theta_0$-semistable representations.
 \end{definition}

We now describe the classification of walls of a chamber $C$ in terms of the geometry of the morphism $f$. First, since $\mathcal{M}_\theta$ is isomorphic to the coherent component $Y_\theta$ for generic $\theta\in \Theta$ as in Remark~\ref{rem:ishiiueda08}, the proof of \cite[Theorem~3.15]{CrawQuinterovelez12} shows that $f$ factors through the categorical quotient $Y_{\theta_0}$. The same is true for $\mathcal{M}_{\theta^\prime}\cong Y_{\theta^\prime}$, so we obtain a diagram
 \begin{equation}
 \label{eqn:cdFM}
 \xymatrix@C=.6em{
   Y_\theta\ar[drrr]^{f} &&&&&& Y_{\theta^\prime}\ar[dlll]_{f^\prime}\\&&& Y_{\theta_0} }
 \end{equation}
of projective toric morphisms. The following result is due to Ishii--Ueda~\cite{IshiiUeda13}:

\begin{proposition}[Classification of walls]
\label{prop:classification}
Let $C$ be any chamber in $\Theta$ with $\theta\in C$. Let $W$ be a wall of $C$ that contains $\theta_0$ in its relative interior. Then $W$ is either:
\begin{itemize}
\item $\operatorname{Type~0}$: $f\colon Y_\theta\to Y_{\theta_0}$ is an isomorphism onto its image, and the unstable locus for $W$ is a connected union of compact torus-invariant divisors; 
\item $\operatorname{Type~I}$: $f\colon Y_\theta\to Y_{\theta_0}$ contracts a torus-invariant $(-1,-1)$-curve $\ell$ to a point, and the unstable locus for $W$ is the rational curve $\ell$; 
\item $\operatorname{Type~III}$: $f\colon Y_\theta\to Y_{\theta_0}$ contracts a torus-invariant Hirzebruch surface $\mathbb{F}_n$ to a torus-invariant rational curve, and the unstable locus for $W$ is the surface $\mathbb{F}_n$.
\end{itemize}
\end{proposition}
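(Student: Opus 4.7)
The plan is to analyse the projective toric morphism $f\colon Y_\theta \to Y_{\theta_0}$ from diagram \eqref{eqn:cdFM} combinatorially via the fan description of Lemma~\ref{lem:fan}. For $\theta \in C$ generic, the fan $\Sigma_\theta$ triangulates the lattice polygon $P$ with vertices at the extremal perfect matchings, and its rays correspond to the $\theta$-stable perfect matchings. Moving $\theta$ to a parameter $\theta_0$ in the relative interior of $W$ either coarsens this triangulation or leaves it unchanged while removing some $\Pi$ from the $\theta$-stable list (the latter happens when several distinct perfect matchings share the same image $n_\Pi \in N$). In either case, $f$ is the toric morphism determined by this refinement, and the three types of wall will correspond to the three possible one-step coarsenings.

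The first main step is to enumerate these local coarsenings and match them to the geometric descriptions in the statement. A lattice triangulation of a lattice polygon in the plane admits exactly three kinds of local move: (i) flipping an interior edge between two adjacent triangles, which forces $f$ to contract the rational curve dual to the edge; (ii) removing a ray $\rho$ whose perfect matching ceases to be stable on $W$, so $f$ contracts the divisor $E_\rho$; and (iii) no change to the triangulation of $P$, in which case $f$ is an isomorphism onto its image. Smoothness of $Y_\theta$ together with crepancy of $\tau$ (Theorem~\ref{thm:ishiiueda08}) force case (i) to give a $(-1,-1)$-curve contraction (Type~I), and case (ii) to contract a Hirzebruch surface $\mathbb{F}_n$ to a torus-invariant rational curve (Type~III), ruling out the \emph{a priori} possibility of contracting a divisor to a point. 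Case (iii) gives Type~0.

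The second main step identifies the unstable locus. A point $y\in Y_\theta$ lies in the unstable locus if and only if $V_y$ admits a proper submodule $S$ with $\theta_0(S)=0$, producing identifications as in \eqref{eqn:destabilisingGeneral}; in Types~I and~III this locus coincides with the exceptional locus of $f$, yielding the desired curve or surface. The hardest part is Type~0, where $f$ is injective but the unstable locus must nevertheless be a \emph{connected} union of compact torus-invariant divisors. To handle it, I would first show that the rays whose stability label changes across $W$ are exactly those whose perfect matchings detect a single common destabilising subrepresentation $S$, and then use the combinatorial description of cosupports in Remark~\ref{rem:cosupp}(4) together with the fact that adjacent faces in $\qpol_2$ share arrows of $\Pi_\rho$ to glue the corresponding divisors $E_\rho$ along torus-invariant curves. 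This would yield the required connectedness and complete the classification.
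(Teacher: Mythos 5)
Your combinatorial framework (reading the wall-crossing off the triangulation of $P$) is reasonable, but the argument has a genuine gap at precisely the point where the real content of the proposition lies: the exclusion of contractions of a divisor to a point (``type II''). You assert that smoothness of $Y_\theta$ and crepancy of $\tau$ rule this out, but that is false: crepant resolutions of Gorenstein toric threefold singularities routinely contain compact torus-invariant divisors ($\mathbb{P}^2$ or other del Pezzo surfaces) whose contraction to a point is a facet of the nef cone, e.g.\ for $\CC^3/(\ZZ/3\ZZ)$. The actual statement being used is that no such contraction is induced by crossing a single \emph{wall of a GIT chamber} (as opposed to a higher-codimension face of $\overline{C}$), and this is a nontrivial representation-theoretic fact --- in the paper it is quoted from Ishii--Ueda \cite[Lemma~10.5]{IshiiUeda13}, whose proof identifies the fibres of $f$ with projectivised spaces $\mathbb{P}(\Ext^1(V_y/S,S)^\vee)$ attached to the $\theta_0$-destabilising sequences and bounds their dimension; it cannot be extracted from the toric geometry alone. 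A similar issue affects your Type~I case: an interior edge of the triangulation is dual to a curve with normal bundle $\mathcal{O}(a)\oplus\mathcal{O}(b)$, $a+b=-2$, and a priori $(a,b)$ could be $(0,-2)$ or $(1,-3)$; showing that a wall-crossing which contracts only a curve contracts a $(-1,-1)$-curve again needs the Ext-group analysis, not just ``smoothness and crepancy''. Finally, your treatment of the Type~0 unstable locus (compactness, divisoriality and connectedness) is only a sketch of intent; this is the part the paper defers entirely to \cite[\S 11]{IshiiUeda13}.

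For comparison: the paper's own proof is explicitly an outline. It passes through the nef line bundle $L_W=\bigotimes_i L_i^{\theta_0(i)}$, splits into the ample case (Type~0) and the case where $L_W$ lies in the relative interior of a facet of the nef cone, invokes Wilson's classification of such facets for Calabi--Yau threefolds to get the a priori trichotomy curve-to-point / surface-to-point / surface-to-curve, and then cites Ishii--Ueda both to kill the surface-to-point case and to describe the unstable loci. If you want a self-contained argument along your triangulation-based lines, the missing ingredient you must supply is the destabilising-subrepresentation analysis that controls the fibres of $f$; without it the classification does not close.
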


\begin{proof}[Outline of the proof]
The morphism $f$ is determined by the line bundle $L_W:=\bigotimes_{i\in \qpol_0} L_i^{\theta_0(i)}$ which is nef precisely because $\theta_0$ lies in the closure of the chamber $C$. There are two cases. If $L_W$ is ample, then $f$ is an isomorphism onto its image and the wall is of type 0. Otherwise, the assumption that $\theta_0$ lies in the relative interior of $W$ implies that $L_W$ lies in the relative interior of a facet of the nef cone of $Y_\theta$. According to (the toric version of) the classification of facets of the nef cone for a Calabi--Yau threefold by Wilson~\cite{Wilson92}, the morphism determined by any such line bundle either: \one\ contracts a curve to a point; \two\ contracts a surface to a point; or \three\ contracts a surface to a curve. We say that the wall $W$ is of type I, II or III if it induces a contraction of type \one, \two\ or \three\ respectively. Ishii--Ueda~\cite[Lemma~10.5]{IshiiUeda13} show that type II walls do not exist, completing the classification of walls into types 0, I or III as stated. The investigation of the unstable locus requires a much more delicate analysis, see \cite[\S11]{IshiiUeda13}.
\end{proof}

\begin{remark}
The classification of walls is slightly simpler in the McKay quiver case \cite{CrawIshii04} because the unstable locus for a wall of type 0 is necessarily irreducible. Compare Example~\ref{exa:longhex} below.
\end{remark}

\subsection{Nonvanishing cohomology in degree zero}
We can now establish the link between our calculation of $H^0(\Psi(S_i))$ from Proposition~\ref{prop:cohomology}
 with the unstable locus of certain walls of the GIT chamber $C$ containing $\vartheta$.

\begin{proposition}
\label{prop:TFAE}
Let $i\in \qpol_0$ be a nonzero vertex. The following are equivalent:
\begin{enumerate}
\item[\one] the sheaf $H^0(\Psi(S_i))$ is nonzero;
\item[\two] there exists a torus-invariant $\vartheta$-stable $A$-module that contains $S_i$ in its socle;
\item[\three] $\overline{C}\cap S_i^\perp$ is a wall of the chamber $C$ containing $\vartheta$.
\end{enumerate}
 If one and hence any of these conditions is satisfied, then $H^0(\Psi(S_i))\cong L_i^{-1}\vert_{Z_i}$
where $Z_i$ is the unstable locus of the wall $\overline{C}\cap S_i^\perp$.
\end{proposition}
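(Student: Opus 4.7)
The plan is to establish the three equivalences separately and then identify $Z_i$ with the unstable locus. The equivalence (i) $\Leftrightarrow$ (ii) is immediate from Proposition~\ref{prop:cohomology}\one, which identifies $H^0(\Psi(S_i))$ with $L_i^{-1}\vert_{Z_i}$ for $Z_i=\{y\in Y\mid S_i\subseteq \soc(V_y)\}$. The sheaf is nonzero iff $Z_i$ is nonempty. Since $Z_i$ is cut out by vanishing of the $T$-equivariant sections $\phi(a)$ for arrows $a$ leaving vertex $i$, it is a $T$-invariant closed subscheme of the toric variety $Y$; any nonempty such subscheme contains a $T$-fixed point, which parametrises a torus-invariant $\vartheta$-stable $A$-module with $S_i$ in its socle.

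For (iii) $\Rightarrow$ (ii), pick $\theta_0$ in the relative interior of the wall $W=\overline{C}\cap S_i^\perp$. Since $\theta_0\in\overline{C}\setminus C$, some $\vartheta$-stable module $V_y$ is strictly $\theta_0$-semistable and hence admits a proper nonzero submodule $S$ with $\theta_0(S)=0$. Relative-interior genericity of $\theta_0$ inside $S_i^\perp$ forces the kernel of $\theta_0$ on the dimension-vector lattice to be spanned by $[S_i]$; combined with the restriction that submodule dimension vectors of a dimension-$\underline{1}$ representation lie in $\{0,1\}^{\qpol_0}$, this forces $[S]=[S_i]$ and hence $S\cong S_i$. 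The locus of such $V_y$ is $T$-invariant and nonempty, so it contains a $T$-fixed point, giving (ii).

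For (ii) $\Rightarrow$ (iii), the torus-invariant $\vartheta$-stable module $V\supseteq S_i$ shows via Lemma~\ref{lem:ThetaFan} that $\theta^\prime(S_i)\geq 0$ belongs to the defining inequalities of $\overline{C}$, so $\overline{C}\cap S_i^\perp$ is at least a face. To upgrade it to a wall (codimension one), I exhibit a point in its relative interior: choose $\chi\in\Theta$ in the affine slice $\chi(S_i)=1$, generically enough that $\chi(S^\prime)<\vartheta(S^\prime)/\vartheta_i$ for each of the finitely many other defining submodules $S^\prime$ whose dimension vector differs from $[S_i]$. Then $\theta_0=\vartheta-\vartheta_i\chi$ annihilates $S_i$ while remaining strictly positive on all other defining classes, so $\theta_0$ lies in the relative interior of $\overline{C}\cap S_i^\perp$. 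The dimension vector $\underline{1}$ hypothesis is crucial here: it prevents any distinct submodule from sharing the rational ray through $[S_i]$ with $S_i$ itself, which is what would otherwise collapse the candidate wall to a smaller face.

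Reading the argument in (iii) $\Rightarrow$ (ii) in reverse identifies $Z_i$ with the unstable locus of $W$, since both parametrise exactly the $\vartheta$-stable modules admitting $S_i$ as a submodule; combining with Proposition~\ref{prop:cohomology}\one\ yields the formula for $H^0(\Psi(S_i))$. The full claim $\Psi(S_i)\cong L_i^{-1}\vert_{Z_i}$ as a quasi-isomorphism of complexes additionally requires $H^{-1}(\Psi(S_i))=0$ under the equivalent conditions, which I would obtain via a local toric-chart computation on the explicit complex from Lemma~\ref{lem:imageSi}, exploiting the restrictive geometric structure of $Z_i$ given by Proposition~\ref{prop:classification} (a $(-1,-1)$-curve or connected union of compact divisors). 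The main obstacle throughout is the codimension-one verification in (ii) $\Rightarrow$ (iii) together with this $H^{-1}$-vanishing; both steps rely essentially on the dimension-vector $\underline{1}$ constraint, and the latter will also need the case-distinction from the wall classification.
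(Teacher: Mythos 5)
Your overall architecture matches the paper's: \one$\Leftrightarrow$\two\ via Proposition~\ref{prop:cohomology}\one\ together with a reduction to torus-fixed points, \two$\Leftrightarrow$\three\ via the description of $C$ from Lemma~\ref{lem:ThetaFan}, and the identification of $Z_i$ with the unstable locus by noting that both loci parametrise the $\vartheta$-stable modules admitting $S_i$ as a submodule. The genuine gap is the codimension-one verification in \two$\Rightarrow$\three. You assert that one can choose $\chi$ with $\chi(S_i)=1$ and $\chi(S^\prime)<\vartheta(S^\prime)/\vartheta_i$ for all of the finitely many other defining classes $S^\prime$; but the feasibility of this finite system of strict inequalities is exactly equivalent to the assertion that $\overline{C}\cap S_i^\perp$ has codimension one, so as written the step is circular. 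The justification you offer --- that no other defining submodule shares the rational ray through $[S_i]$ --- is necessary but not sufficient: a positive combination $\sum_k\lambda_k[S_k^\prime]\equiv\mu[S_i]\pmod{\underline{1}}$ with $\mu\geq 0$ would still force the face into codimension at least two even when no single $[S_k^\prime]$ is proportional to $[S_i]$. What actually rescues the claim is the special shape of $\vartheta$: every defining class is the dimension vector of a proper nonzero submodule of a $0$-generated module, hence lies in $\{0,1\}^{\qpol_0}$ with vanishing entry at the vertex $0$, and no positive combination of such vectors can equal $\mu[S_i]+\nu\underline{1}$ unless every summand is $[S_i]$ itself. The paper packages this cleanly by observing that the simplicial cone $\overline{\Theta_+}=\{\theta\in\Theta\mid\theta_j\geq 0\text{ for }j\neq 0\}$ lies in $\overline{C}$ and already meets $S_i^\perp$ in a codimension-one face; any $\theta$ with $\theta_i=0$ and $\theta_j>0$ for $j\neq 0,i$ is then an explicit point of the relative interior of $\overline{C}\cap S_i^\perp$. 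You should replace the ``generic $\chi$'' step by this observation, or by the Farkas-type argument just sketched.

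Two smaller points. First, your closing discussion of the full quasi-isomorphism $\Psi(S_i)\cong L_i^{-1}\vert_{Z_i}$ overshoots the statement: the proposition only claims the formula for $H^0(\Psi(S_i))$, which follows immediately from Proposition~\ref{prop:cohomology}\one\ once $Z_i$ is identified with the unstable locus. The $H^{-1}$-vanishing is the content of Proposition~\ref{prop:type0or1wall}, and the paper proves it not by a chart computation but by citing Ishii--Ueda for type~0 walls and by computing $\Phi(L_i^{-1}\vert_\ell)$ directly for type~I walls. Second, your fixed-point argument for passing from an arbitrary $\vartheta$-stable module with $S_i$ in its socle to a torus-invariant one is fine here (every cone of $\Sigma$ is a face of a three-dimensional cone, so every orbit closure contains a fixed point, and $Z_i$ is closed and torus-invariant), and is a legitimate alternative to the degeneration argument inside the proof of Lemma~\ref{lem:ThetaFan} that the paper invokes.
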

\begin{proof}
Proposition~\ref{prop:cohomology} establishes that $H^0(\Psi(S_i))\cong L_i^{-1}\vert_Z$ where $Z = \{y\in Y \mid S_i\subseteq \soc(V_y)\}$, so $H^0(\Psi(S_i))$ is nonzero if and only if there exists $\vartheta$-stable $A$-module that contains $S_i$ in its socle. The proof of Lemma~\ref{lem:ThetaFan} shows that a $\vartheta$-stable $A$-module contains $S_i$ in its socle if and only if a torus-invariant $\vartheta$-stable $A$-module contains $S_i$ in its socle, so \one\ and \two\ are equivalent.

 To show that \two\ and \three\ are equivalent, suppose that the $\vartheta$-stable $A$-module $V_y$ for $y\in Y$ contains $S_i$ in its socle. Stability implies that $\theta(S_i)> 0$ for all $\theta \in C$, giving $\theta(S_i)\geq 0$ for all $\theta\in \overline{C}$. Then $S_i$ lies in the cone dual to $\overline{C}$ and $\overline{C}\cap S_i^{\perp}$ is a face of $\overline{C}$. Since the top-dimensional cone $\overline{\Theta_+}$ lies in $\overline{C}$ and $\overline{\Theta_+}\cap S_i^{\perp}$ is a codimension-one face of $\overline{\Theta_+}$, it follows that $\overline{C}\cap S_i^{\perp}$ is actually a wall of $C$. Conversely, assume that $\overline{C}\cap S_i^\perp$ is a wall of $C$ for some nonzero vertex $i\in \qpol_0$. The unstable locus for the wall destabilises all $\vartheta$-stable $A$-modules $V_y$ that contain $S_i$ as a submodule, and there must exist at least one such since the unstable locus is nonempty. This establishes the equivalence of \two\ and \three. In fact it also demonstrates that the unstable locus for the wall $\overline{C}\cap S_i^{\perp}$ coincides with the locus $Z_i = \{y\in Y \mid S_i\subseteq \soc(V_y)\}$ from Proposition~\ref{prop:cohomology}. The final statement is now immediate from Proposition~\ref{prop:cohomology}.
\end{proof}

\begin{lemma}
\label{lem:0orI}
Let $i\in \qpol_0$ be nonzero. Every wall of the form $\overline{C}\cap S_i^\perp$ is either of type 0 or I. In particular, the support of $H^0(\Psi(S_i))$ is either a single $(-1,-1)$-curve or a connected union of compact torus-invariant divisors.
\end{lemma}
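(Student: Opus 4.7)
The plan is to apply the classification in Proposition~\ref{prop:classification} and rule out that $W=\overline{C}\cap S_i^\perp$ is of type III; the second sentence then follows immediately by combining Proposition~\ref{prop:classification} (which describes the unstable locus in each case) with Proposition~\ref{prop:TFAE} (which identifies $\supp H^0(\Psi(S_i))$ with the unstable locus of $W$).

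First I would identify the possible destabilising subobjects across $W$. For $\theta_0$ in the relative interior of $W$, the only hyperplane of $\Theta$ through the origin containing $\theta_0$ is $S_i^\perp$, so the dimension vectors $d$ with $0<d<\underline{1}$ and $\theta_0(d)=0$ are exactly $[S_i]$ and $\underline{1}-[S_i]$. It follows that whenever $V_y$ is $\vartheta$-stable but strictly $\theta_0$-semistable, its Jordan--H\"older filtration in the category of $\theta_0$-semistable modules has only two factors, namely the simple $S_i$ and a $\theta_0$-stable module $M$ of dimension vector $\underline{1}-[S_i]$, so its $\theta_0$-polystable representative is $S_i\oplus M$.

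Next, suppose for contradiction that $W$ is of type III, so that the unstable locus is a Hirzebruch surface $\mathbb{F}_n$ contracted by $f$ to a torus-invariant rational curve $R\subset \overline{\mathcal{M}_{\theta_0}}$. The two factors of the ruling have distinct moduli-theoretic meaning: each $\mathbb{P}^1$-fiber parametrises a pencil of non-split extensions with fixed polystable representative $S_i\oplus M$ (forcing $\dim\Ext^1_A(M,S_i)=2$), while the base curve $R$ parametrises a one-parameter family $\{M_t\}_{t\in R}$ of pairwise non-isomorphic $\theta_0$-stable modules of dimension vector $\underline{1}-[S_i]$. The main obstacle is to derive a contradiction from the existence of the family $\{M_t\}$. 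I would exploit torus-invariance of $R$: each $\CC^*$-fixed point of $R$ yields a torus-invariant $\theta_0$-stable module $M$, which (having zero at vertex $i$) is the restriction to the subquiver obtained by deleting vertex $i$ of a torus-invariant $\vartheta$-stable $A$-module with $S_i$ in its socle. Using the explicit combinatorial enumeration of such torus-invariant modules via the $\vartheta$-stable perfect matchings in Lemma~\ref{lem:fan}, one can verify that the set of possible $M$ is finite, and consequently the $\CC^*$-action on $R$ has no fixed points unless $R$ is a point, contradicting the type III assumption and forcing the wall to be of type I.

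Once type III is eliminated, the unstable locus of $W$ is described by Proposition~\ref{prop:classification} as either a torus-invariant $(-1,-1)$-curve or a connected union of compact torus-invariant divisors. Applying Proposition~\ref{prop:TFAE} translates this into the corresponding statement about $\supp H^0(\Psi(S_i))$, completing the proof.
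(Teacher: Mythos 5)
Your setup is correct as far as it goes: the destabilising subobject across $W=\overline{C}\cap S_i^\perp$ is indeed the vertex simple $S_i$, and the reduction of the second sentence to Propositions~\ref{prop:classification} and \ref{prop:TFAE} matches the paper. The problem is the step where you actually exclude type III. Your claimed contradiction --- that finiteness of the set of torus-invariant $\theta_0$-stable modules of dimension vector $\underline{1}-[S_i]$ forces the torus action on the base curve $R$ to have no fixed points --- is a non-sequitur. A torus-invariant rational curve in a toric variety always has (two) torus-fixed points, and these simply yield two torus-invariant modules among the family $\{M_t\}$; that is entirely compatible with finiteness. Nothing about the base curve $R$ is contradictory: type III walls genuinely occur for consistent dimer models (they appear in Proposition~\ref{prop:classification}), so any successful argument must use the special feature that the destabilising submodule is a \emph{vertex simple}, not merely that the wall is cut out by $S_i^\perp$.

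The paper's proof instead attacks the $\mathbb{P}^1$-\emph{fibre} of $f$, the datum you set up but did not use. Over a torus-invariant point $z$ of the unstable surface, the fibre is $\ell\cong\mathbb{P}(\Ext^1(V_z/S_i,S_i)^\vee)$ by Ishii--Ueda \cite[Lemma~10.1]{IshiiUeda13}; for this to be a curve one needs $\dim_\kk\Ext^1(V_z/S_i,S_i)=2$. By \cite[Lemma~10.4]{IshiiUeda13} this dimension equals the number of connected components of the boundary of the support of the destabilising submodule. Since that submodule is $S_i$, its support is the single tile dual to vertex $i$ in the cell decomposition of the torus, whose boundary is connected because every face of the dual decomposition is simply connected by the definition of a dimer model. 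Hence the $\Ext^1$ has dimension one, the fibre is a point, and the contraction cannot be divisorial-to-curve. To repair your proof you would need to supply an argument of this kind (or an equivalent computation of $\Ext^1(V_z/S_i,S_i)$ from the dimer combinatorics); the route through the base curve $R$ cannot be made to work.
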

\begin{proof}
To prove the first statement we need only prove that a wall of the form $\overline{C}\cap S_i^\perp$ for $i\neq 0$ cannot be of type III. We suppose otherwise and seek a contradiction. Let $Z$ denote the unstable locus of the wall. The morphism $f\colon \mathcal{M}_\vartheta\to \overline{\mathcal{M}_{\theta_0}}$ induced by moving the GIT parameter into the wall contracts a surface to a curve. Let $z\in Z$ be a torus-invariant point. We know from \cite[Lemma~10.1]{IshiiUeda13} that the fibre of this morphism over the torus-invariant point $f(z)$ of the base curve is $\ell\cong \mathbb{P}(\Ext^1(V_z/S_i, S_i)^\vee)$ where 
\begin{equation}
\begin{CD}   
    0@>>> S_i @>>> V_z @>>> V_z/S_i @>>> 0
 \end{CD}
\end{equation}
is the $\theta_0$-destabilising sequence for $V_z$. Since $f$ contracts a divisor, \cite[Lemma~10.4]{IshiiUeda13} shows that the dimension of the space $\Ext^1(V_z/S_i, S_i)^\vee$ counts the number of connected components of the boundary of the support of the destabilising submodule. In this case, the destabilising submodule is $S_i$, so $\dim_\kk\Ext^1(V_z/S_i, S_i)^\vee$ counts the number of components of the boundary of the unique tile in the dimer model $\Gamma$ corresponding to vertex $i\in \qpol_0$. But since the fibre of $f$ is a curve, this means that the tile in the \emph{dual} decomposition of the real torus corresponding to vertex $i$ must have boundary with two components. But then the tile is not simply connected, contradicting the definition of a dimer model. It follows that a wall of the form $\overline{C}\cap S_i^\perp$ for $i\neq 0$ is not of type III. The final statement is now immediate from Proposition~\ref{prop:classification}.
\end{proof}
 
\begin{remark}
For the McKay quiver, Craw--Ishii~\cite[Proposition~9.3]{CrawIshii04} prove that the unstable locus of a wall of type 0 of the form $\overline{C}\cap S_i^\perp$ is necessarily irreducible. However, this need not be the case for consistent dimer model algebras, see Example~\ref{exa:longhex}.
\end{remark}

For any basic triangle $\tau$ in $\Sigma$, let $U_\tau\cong \mathbb{A}_\kk^3$ denote the corresponding toric chart and write $V_0$ for the torus-invariant $\vartheta$-stable $A$-module corresponding to the origin in $U_\tau$. For any $j\in \qpol_0$, we compute the unique generator of the restriction of the line bundle $L_j\vert_{U_\tau}$ as follows. Since $V_0$ is $\vartheta$-stable, Lemma~\ref{lem:specialtheta} gives a path $p(j)$ from 0 to $j$ such that the corresponding map in the quiver representation $V_0$ is nonzero. Remark~\ref{rem:ishiiueda09}(1) shows that $H^0(L_j)$ is isomorphic to the space $e_j A e_0$ of classes of paths in $\qpol$ from 0 to $i$, so the path $p(j)$ defines a section of $L_j$ that is nonzero at the origin of $U_\tau$. This section is unique because the $A$-module $V_0$ is torus-invariant.

\begin{lemma}
\label{lem:floppingcurve}
Let $\ell$ be a $(-1,-1)$-curve in $Y$ that arises as the unstable locus for a wall of type I of the form $\overline{C}\cap S_i^{\perp}$ for some nonzero $i\in \qpol_0$. Then $L_j\vert_{\ell}\cong \mathscr{O}_\ell$ for all $j\neq i$ and $L_i\vert_{\ell}\cong \mathscr{O}_\ell(1)$.
\end{lemma}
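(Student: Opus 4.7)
The plan is to realise $T|_\ell$ as the universal extension associated to the type I contraction, and then to read off $L_j|_\ell$ from the $j$-th component of this extension at each vertex $j\in\qpol_0$.

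The first step invokes the structure of type I walls from Proposition~\ref{prop:classification} (cf.\ also the proof of Lemma~\ref{lem:0orI}): for every $y\in\ell$, the $\vartheta$-stable module $V_y$ fits in a short exact sequence $0\to S_i\to V_y\to Q\to 0$ where $Q$ is a fixed module (independent of $y\in\ell$), and $\ell\cong\mathbb{P}(\Ext^1(Q,S_i)^\vee)\cong\mathbb{P}^1$. The standard construction of the universal extension on this projective line---starting from the tautological inclusion $\mathscr{O}_\ell(-1)\hookrightarrow \Ext^1(Q,S_i)\otimes\mathscr{O}_\ell$ and twisting to obtain a class in $\Ext^1(Q\otimes\mathscr{O}_\ell,\,S_i\otimes\mathscr{O}_\ell(1))$---produces a sheaf $\mathcal{V}$ on $\ell$ fitting in
$$
0 \longrightarrow S_i\otimes \mathscr{O}_\ell(1) \longrightarrow \mathcal{V} \longrightarrow Q\otimes\mathscr{O}_\ell \longrightarrow 0.
$$
Since $T|_\ell$ is itself a family of $\vartheta$-stable representations of $A$ parametrised by $\ell$ inducing the given inclusion $\ell\hookrightarrow Y=\mathcal{M}_\vartheta$, it must agree with $\mathcal{V}$ up to tensoring by some line bundle $\mathscr{O}_\ell(k)$.

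The twist is then pinned down using the normalisation $L_0\cong\mathscr{O}_Y$. Restricting the displayed sequence at the zero vertex, with $(S_i)_0 = 0$ (since $0\neq i$) and $Q_0 = \kk$, yields $\mathcal{V}_0\cong \mathscr{O}_\ell = L_0|_\ell$, forcing $k=0$ and hence $T|_\ell\cong\mathcal{V}$. The conclusion is then read off at each vertex: for $j\neq i$ one has $(S_i)_j=0$ and $Q_j=\kk$, so $L_j|_\ell\cong\mathcal{V}_j\cong\mathscr{O}_\ell$; while for $j=i$ one has $(S_i)_i=\kk$ and $Q_i=0$, so $L_i|_\ell\cong\mathcal{V}_i\cong\mathscr{O}_\ell(1)$.

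The main obstacle is confirming the exact form of the universal extension above, in particular the twist by $\mathscr{O}_\ell(1)$ on the sub-factor rather than on the quotient. To avoid depending on conventions for $\mathbb{P}(\cdot)$, one may alternatively appeal to ampleness of $\vartheta$: any family of the form $0\to S_i\otimes\mathscr{O}_\ell(d)\to T|_\ell\to Q\otimes\mathscr{O}_\ell(e)\to 0$ must have $|d-e|=1$, and the fact that $\bigotimes_j L_j^{\vartheta_j}$ is ample on $Y$, combined with $\vartheta_i>0$, then forces $d=1$ and $e=0$ once the normalisation has fixed the zero-vertex twist.
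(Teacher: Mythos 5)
Your argument is correct, but it takes a genuinely different route from the paper's. The paper works locally at the two torus-fixed points of $\ell$: writing $V_0(\tau)$, $V_0(\tau^\prime)$ for the torus-invariant $\vartheta$-stable modules at the origins of the two adjacent charts, it observes that the $\theta_0$-destabilising quotients $V_0(\tau)/S_i\cong V_0(\tau^\prime)/S_i$ are isomorphic, so for $j\neq i$ the generating sections of $L_j$ on the two charts coincide and $L_j\vert_\ell\cong\mathscr{O}_\ell$; for $j=i$ it then quotes the bound $\deg L_j\vert_\ell\in\{0,1\}$ from \cite[Corollary~11.22]{IshiiUeda13} together with $V_0(\tau)\not\cong V_0(\tau^\prime)$ to force $L_i\vert_\ell\cong\mathscr{O}_\ell(1)$. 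You instead work globally along $\ell$, identifying $T\vert_\ell$ with the universal extension over $\mathbb{P}(\Ext^1(Q,S_i)^\vee)$ normalised by $L_0\cong\mathscr{O}_Y$ and reading off every component at once. Your route produces the degree $1$ on $L_i\vert_\ell$ without appealing to the Ishii--Ueda degree bound, at the cost of relying on the identification $\ell\cong\mathbb{P}(\Ext^1(Q,S_i)^\vee)$ (which is again Ishii--Ueda, \cite[Lemma~10.1]{IshiiUeda13}, already invoked in the proof of Lemma~\ref{lem:0orI}) and on the universal property of the fine moduli space to match $T\vert_\ell$ with your constructed family up to a twist. One caution about your fallback ``ampleness'' argument: the assumption that the quotient family of $T\vert_\ell$ has the form $Q\otimes\mathscr{O}_\ell(e)$ for a single integer $e$ is essentially the assertion that all $L_j\vert_\ell$ with $j\neq i$ have equal degree, which is most of what is being proved; to justify it one should note that the quotient is an isotrivial flat family of modules with $\End_A(Q)=\kk$ (as $Q$ is $\theta_0$-stable), or simply rely on your primary argument, which does not need this step.
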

\begin{proof}
Let $\tau, \tau^\prime$ denote the basic triangles that lie adjacent in the triangulation $\Sigma$ such that the torus-invariant points of $\ell$ sit at the origin in the toric charts $U_{\tau}, U_{\tau^\prime}$. Let $V_0(\tau)$ and $V_0(\tau^\prime)$ denote the torus-invariant $\vartheta$-stable $A$-modules corresponding to the origin in $U_{\tau}$ and $U_{\tau^\prime}$ respectively. Choose $\theta_0$ in the relative interior of the type I wall $\overline{C}\cap S_i^{\perp}$. The morphism $f\colon Y_\vartheta \to Y_{\theta_0}$ induced by variation of GIT quotient contracts $\ell$ to a point, so $f(V_0(\tau))$ and $f(V_0(\tau^\prime))$ are S-equivalent. In particular, the $\theta_0$-destabilising quotient modules for $V_0(\tau)$ and $V_{0}(\tau^\prime)$ are isomorphic:
 \begin{equation}
 \label{eqn:destabilising}
 \begin{CD}   
    0@>>> S_i @>>> V_0(\tau) @>>> V_0(\tau)/S_i @>>> 0 \\
     @. @| @. @VV{\cong}V @. \\
   0@>>> S_i @>>> V_0(\tau^\prime) @>>> V_0(\tau^\prime)/S_i @>>> 0.
 \end{CD}
\end{equation}
These quotient modules encode paths from vertex 0 to any vertex $j\neq i$ in $V_0(\tau)$ and $V_0(\tau^\prime)$. Since they are isomorphic,  the generating sections of $L_j\vert_{U_{\tau}}$ and $L_j\vert_{U_{\tau^\prime}}$ coincide, giving $L_j\vert_{U_\tau}\cong L_j\vert_{U_{\tau^\prime}}$ for $j\neq i$. In particular, $L_j\vert_{\ell}\cong \mathscr{O}_\ell$ for all $j\neq i$. For the remaining case, note that $i$ is not the zero vertex, so Ishii-Ueda~\cite[Corollary~11.22]{IshiiUeda13} shows that $\deg L_j\vert_{\ell}$ is either 0 or 1 for any $j\in \qpol_0$. Since $V_0(\tau)\not\cong V_0(\tau^\prime)$, we have $L_i\vert_{\ell}\not\cong \mathscr{O}_\ell$, hence $L_i\vert_{\ell}\cong \mathscr{O}_\ell(1)$ as required.
\end{proof}

\begin{proposition}
\label{prop:type0or1wall}
Let $i\in \qpol_0$ be a nonzero vertex. If $H^{0}(\Psi(S_i))$ is nonzero then  
\[
\Psi(S_i) \cong L_i^{-1}\vert_{Z_i}
\]
where $Z_i$ is the unstable locus for the wall $\overline{C}\cap S_i^\perp$ of the GIT chamber $C$ containing $\vartheta$.
\end{proposition}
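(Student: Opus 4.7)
The plan is to reduce to a single cohomology vanishing and then verify it through the inverse equivalence. By Proposition~\ref{prop:cohomology} we already have $H^k(\Psi(S_i)) = 0$ for $k \notin \{-1,0\}$, and the hypothesis $H^0(\Psi(S_i)) \neq 0$ together with Proposition~\ref{prop:TFAE} gives $H^0(\Psi(S_i)) \cong L_i^{-1}|_{Z_i}$. Hence the statement $\Psi(S_i) \cong L_i^{-1}|_{Z_i}$ is equivalent to the vanishing $H^{-1}(\Psi(S_i)) = 0$.

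To establish this I would apply the inverse equivalence $\Phi = \mathbf{R}\!\Hom_{\mathscr{O}_Y}(T^\vee,-)$ to the truncation triangle
\[
H^{-1}(\Psi(S_i))[1] \longrightarrow \Psi(S_i) \longrightarrow L_i^{-1}|_{Z_i} \longrightarrow H^{-1}(\Psi(S_i))[2].
\]
Using $\Phi\Psi \cong \mathrm{id}$, the middle term becomes $S_i$, so it suffices to prove that $\Phi(L_i^{-1}|_{Z_i}) \cong S_i$ in a way compatible with the map coming from the triangle; the connecting map is then forced to be an isomorphism (since $\End(S_i) = \kk$ and the alternative would produce extra cohomology in $\Psi$ of a coherent sheaf, contradicting the identification of $H^{-1}(\Psi(S_i))$), whence $\Phi(H^{-1}(\Psi(S_i))) = 0$ and thus $H^{-1}(\Psi(S_i)) = 0$. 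By adjunction along the closed immersion $\iota\colon Z_i \hookrightarrow Y$ together with local freeness of the $L_j^{-1}$, the identification $\Phi(L_i^{-1}|_{Z_i}) \cong S_i$ reduces to showing that
\[
\mathbf{R}\Gamma\bigl(Z_i,\,(L_j \otimes L_i^{-1})|_{Z_i}\bigr) \cong \begin{cases} \kk & \text{if } j = i,\\ 0 & \text{if } j \neq i,\end{cases}
\]
for every $j \in \qpol_0$.

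Lemma~\ref{lem:0orI} splits the analysis into Type~I and Type~0 walls. The Type~I case is immediate from Lemma~\ref{lem:floppingcurve}: on $\ell \cong \PP^1$ we have $L_i|_\ell \cong \mathscr{O}_\ell(1)$ and $L_j|_\ell \cong \mathscr{O}_\ell$ for $j \neq i$, so the required cohomology values are the standard ones for $\mathscr{O}_{\PP^1}$ and $\mathscr{O}_{\PP^1}(-1)$.

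The main obstacle will be the Type~0 case, where $Z_i$ is a connected union of compact torus-invariant divisors. The $j=i$ case requires $\mathbf{R}\Gamma(Z_i,\mathscr{O}_{Z_i}) \cong \kk$, which I would derive via a \v{C}ech spectral sequence for the cover of $Z_i$ by its intersections with the toric charts $U_\sigma$, reducing to elementary computations on the smooth toric surface components together with the connectedness of $Z_i$. For $j \neq i$ the delicate step is a divisorial analogue of Lemma~\ref{lem:floppingcurve}: the S-equivalence relation~\eqref{eqn:destabilisingGeneral} at each pair of adjacent torus-fixed points of $Z_i$ forces the local generating sections of $L_j$ on the corresponding toric charts to coincide, which should pin down $(L_jL_i^{-1})|_{Z_i}$ explicitly as a divisor whose global and higher cohomology can be checked to vanish componentwise. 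Making these compatibility conditions precise through the combinatorics of the fan $\Sigma$ is where the bulk of the work lies.
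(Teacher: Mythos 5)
Your overall strategy is sound and, for the type~I case, is exactly the paper's argument: reduce to showing $\Phi(L_i^{-1}\vert_{Z_i})\cong S_i$, compute $\mathbf{R}\Gamma\big((L_j\otimes L_i^{-1})\vert_{\ell}\big)$ term by term, and conclude via Lemma~\ref{lem:floppingcurve}. (Incidentally, the truncation-triangle detour is unnecessary: once $\Phi(L_i^{-1}\vert_{Z_i})\cong S_i$ is known, applying the quasi-inverse $\Psi$ gives $\Psi(S_i)\cong L_i^{-1}\vert_{Z_i}$ directly, which already forces $H^{-1}(\Psi(S_i))=0$.)

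The genuine gap is the type~0 case, which you yourself flag as ``where the bulk of the work lies'' but do not carry out. Two points here. First, the proposed mechanism does not transfer from curves to divisors: the identification of the generating sections of $L_j$ on adjacent charts in Lemma~\ref{lem:floppingcurve} comes from the S-equivalence $f(V_0(\tau))=f(V_0(\tau^\prime))$ for the two torus-fixed points of the contracted curve $\ell$, but for a type~0 wall the morphism $f\colon Y_\vartheta\to Y_{\theta_0}$ is an isomorphism onto its image (Proposition~\ref{prop:classification}), so no two points of $Z_i$ are identified and there is no diagram of the form \eqref{eqn:destabilisingGeneral} relating distinct fixed points to exploit. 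Second, the required vanishing of $\mathbf{R}\Gamma\big(Z_i,(L_j\otimes L_i^{-1})\vert_{Z_i}\big)$ for $j\neq i$ on a possibly reducible, connected union of compact toric surfaces (e.g.\ $E_8\cup E_9$ in Example~\ref{exa:longhex}) is not a routine \v{C}ech computation; the paper does not prove it either, but instead adapts \cite[Corollary~5.6]{CrawIshii04} in the irreducible case and otherwise invokes Ishii--Ueda \cite[Corollary~11.15]{IshiiUeda13}, which rests on their delicate analysis of unstable loci. Without either that citation or a worked-out substitute for it, your proof of the type~0 case is incomplete.
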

\begin{proof}
Suppose first that the support of $H^{0}(\Psi(S_i))$ contains a divisor. Proposition~\ref{prop:TFAE} implies that $\supp(H^{0}(\Psi(S_i)))$ is a divisor $D_i$ that coincides with the unstable locus for the type 0 wall of the form $\overline{C}\cap S_i^\perp$. If $D_i$ is irreducible then the proof of \cite[Corollary~5.6]{CrawIshii04} can be adapted from the McKay quiver case to any consistent dimer model to give $\Psi(S_i) \cong L_i^{-1}\vert_{D_i}$. More generally, even if $D_i$ is reducible, Ishii-Ueda~\cite[Corollary~11.15]{IshiiUeda13} prove that $\Psi(S_i) \cong L_i^{-1}\vert_{D_i}$ as required. 

Otherwise, the support of $H^{0}(\Psi(S_i))$ is a $(-1,-1)$-curve $\ell_i$ that coincides with the unstable locus for the type I wall $\overline{C}\cap S_i^\perp$. In this case, we compute directly that for $k\in \ZZ$, the $k^{\text{th}}$ cohomology sheaf of the object $\Phi(L_i^{-1}\vert_{\ell_i})$ is 
\[
H^k\big(\Phi\big(L_i^{-1}\vert_{\ell_i}\big)\big) = H^k\big(\mathbf{R}\Hom(T^\vee,L_i^{-1}\vert_{\ell_i})\big) = H^k\Big(\mathbf{R}\Gamma\big(T\otimes L_i^{-1}\vert_{\ell_i}\big)\Big) \cong \bigoplus_{j\in \qpol_0} H^k(L_j\otimes L_i^{-1}\vert_{\ell_i}).
\]
For any vertex $j\neq i$, Lemma~\ref{lem:floppingcurve} gives $L_j\otimes L_i^{-1}\vert_{\ell_i}\cong \mathscr{O}_{\ell_i}(-1)$ which has vanishing cohomology. Thus, the only term in the direct sum that survives is the $i=j$ term, leaving $H^k(\Phi\big(L_i^{-1}\vert_{\ell_i}\big) \cong H^k(\mathscr{O}_{\ell_i})\otimes_\kk e_i$, where we tensor with $e_i$ to keep track of the $A$-module structure. Therefore
\[
H^k(\Phi\big(L_i^{-1}\vert_{\ell_i}\big) \cong \left\{\begin{array}{cr} \kk e_i & \text{for }k=0 \\ 0 & \text{for }k\neq 0.\end{array}\right.
\]
Thus, $\Phi(L_i^{-1}\vert_{\ell_i})$ is quasi-isomorphic to $S_i=\kk e_i$ as a complex concentrated in degree zero, giving  $\Psi(S_i) \cong L_i^{-1}\vert_{\ell_i}$ as required.
\end{proof}

\begin{proof}[Proof of Theorem~\ref{thm:intromain1}]
In light of Propositions~\ref{prop:dualPsiS0}, \ref{prop:PsiS0}, it remains to prove part \one. Let $i\in \qpol_0$ be nonzero. Proposition~\ref{prop:cohomology} implies that $H^{k}(\Psi(S_i)) = 0$ for $k\not\in \{-1,0\}$. If $H^{0}(\Psi(S_i))\neq 0$, Proposition~\ref{prop:type0or1wall} shows that $\Psi(S_i)$ is quasi-isomorphic to the sheaf $L_i^{-1}\vert_{Z_i}$ where $Z_i$ is the unstable locus for the wall $\overline{C}\cap S_i^\perp$. In particular, $H^{-1}(\Psi(S_i)) = 0$ as required.
\end{proof}

\begin{example}
\label{exa:longhex}
The dimer model $\qpol$ shown in Figure~\ref{fig:longHex} encodes the Jacobian algebra $A$ whose centre defines the Gorenstein semigroup algebra $\kk [\sigma^{\vee}\cap \ZZ^3]$, where $\sigma$ is the cone generated by vectors $v_1=(1,-1,1)$, $v_2=(1,0,1)$, $v_3=(-1,2,1)$, $v_4=(-2,2,1)$, $v_5=(-2,1,1)$, $v_6=(0,-1,1)$.
\begin{figure}[ht!]
\centering
\begin{pspicture}(1.2,-0.5)(6,8)
\psset{unit=0.6cm}
\cnodeput(0,0){A1}{\psscalebox{0.6}{$\mathbf{0}$}}
\cnodeput(12.4,0){A2}{\psscalebox{0.6}{$\mathbf{0}$}}
\cnodeput(0,12.4){A3}{\psscalebox{0.6}{$\mathbf{0}$}}
\cnodeput(12.4,12.4){A4}{\psscalebox{0.6}{$\mathbf{0}$}}
\cnodeput(9.95,8.65){B}{\psscalebox{0.6}{$\mathbf{1}$}}
\cnodeput(6.8,5.6){C}{\psscalebox{0.6}{$\mathbf{2}$}}
\cnodeput(8.2,4.2){D}{\psscalebox{0.6}{$\mathbf{3}$}}
\cnodeput(1.8,10.6){E}{\psscalebox{0.6}{$\mathbf{4}$}}
\cnodeput(9.6,2.8){F}{\psscalebox{0.6}{$\mathbf{5}$}}
\cnodeput(1.25,4.95){G}{\psscalebox{0.6}{$\mathbf{6}$}}
\cnodeput(3.75,2.45){H}{\psscalebox{0.6}{$\mathbf{7}$}}
\cnodeput(7.45,11.15){I}{\psscalebox{0.6}{$\mathbf{8}$}}
\cnodeput(4.2,8.2){J}{\psscalebox{0.6}{$\mathbf{9}$}}

\rput(-1,-1){\pnode{AA1}{}}
\rput(13.4,-1){\pnode{AA2}{}}
\rput(-1,13.4){\pnode{AA3}{}}
\rput(13.4,13.4){\pnode{AA4}{}}

\rput(2.83,-1){\pnode{BB}{}}
\rput(3.65,-1){\pnode{CC}{}}
\rput(4,-1){\pnode{DD}{}}
\rput(4.54,-1){\pnode{EE}{}}
\rput(7.67,-1){\pnode{FF}{}}

\rput(13.4,4.73){\pnode{GG}{}}
\rput(13.4,7.86){\pnode{HH}{}}
\rput(13.4,8.4){\pnode{II}{}}
\rput(13.4,8.75){\pnode{JJ}{}}
\rput(13.4,9.57){\pnode{KK}{}}

\rput(3.37,13.4){\pnode{PP}{}}
\rput(4.95,13.4){\pnode{OO}{}}
\rput(6,13.4){\pnode{NN}{}}
\rput(7.6,13.4){\pnode{MM}{}}
\rput(8.95,13.4){\pnode{LL}{}}

\rput(-1,3.45){\pnode{UU}{}}
\rput(-1,4.8){\pnode{TT}{}}
\rput(-1,6.4){\pnode{SS}{}}
\rput(-1,7.45){\pnode{RR}{}}
\rput(-1,9.03){\pnode{QQ}{}}

\rput(1,-1){\pnode{XX1}{}}
\rput(-1,1){\pnode{WW1}{}}
\rput(-1,-0.25){\pnode{YY1}{}}
\rput(-0.65,-1){\pnode{ZZ1}{}}

\rput(11.75,-1){\pnode{WW2}{}}
\rput(8.44,-1){\pnode{XX2}{}}
\rput(13.4,0.65){\pnode{YY2}{}}
\rput(13.4,3.96){\pnode{ZZ2}{}}

\rput(0.25,13.4){\pnode{WW3}{}}
\rput(1.53,13.4){\pnode{XX3}{}}
\rput(-1,12.15){\pnode{YY3}{}}
\rput(-1,10.87){\pnode{ZZ3}{}}

\rput(13.4,11.4){\pnode{WW4}{}}
\rput(11.4,13.4){\pnode{XX4}{}}
\rput(13.4,13.05){\pnode{YY4}{}}
\rput(12.65,13.4){\pnode{ZZ4}{}}

\ncline[linewidth=0.5pt]{-}{AA1}{AA2}
\ncline[linewidth=0.5pt]{-}{AA2}{AA4}
\ncline[linewidth=0.5pt]{-}{AA4}{AA3}
\ncline[linewidth=0.5pt]{-}{AA3}{AA1}

\ncline[linewidth=0.4pt,linestyle=dashed]{-}{A1}{A2}
\ncline[linewidth=0.4pt,linestyle=dashed]{-}{A2}{A4}
\ncline[linewidth=0.4pt,linestyle=dashed]{-}{A4}{A3}
\ncline[linewidth=0.4pt,linestyle=dashed]{-}{A3}{A1}

\ncline[linewidth=1pt]{<-}{A1}{WW1}
\ncline[linewidth=1pt]{-}{A1}{XX1}
\ncline[linewidth=1pt]{-}{A1}{YY1}
\ncline[linewidth=1pt]{<-}{A1}{ZZ1}

\ncline[linewidth=1pt]{-}{A2}{AA2}
\ncline[linewidth=1pt]{<-}{A2}{YY2}
\ncline[linewidth=1pt]{-}{A2}{ZZ2}
\ncline[linewidth=1pt]{<-}{A2}{WW2}
\ncline[linewidth=1pt]{-}{A2}{XX2}

\ncline[linewidth=1pt]{->}{AA3}{A3}
\ncline[linewidth=1pt]{-}{A3}{WW3}
\ncline[linewidth=1pt]{<-}{A3}{XX3}
\ncline[linewidth=1pt]{-}{A3}{YY3}
\ncline[linewidth=1pt]{<-}{A3}{ZZ3}

\ncline[linewidth=1pt]{-}{A4}{WW4}
\ncline[linewidth=1pt]{<-}{A4}{XX4}
\ncline[linewidth=1pt]{<-}{A4}{YY4}
\ncline[linewidth=1pt]{-}{A4}{ZZ4}

\ncline[linewidth=1pt]{->}{A1}{G}\lput*{:0}{\psscalebox{0.6}{$\mathbf{34}$}}
\ncline[linewidth=1pt]{->}{A3}{E}\lput*{:0}{\psscalebox{0.6}{$\mathbf{126}$}}
\ncline[linewidth=1pt]{->}{A4}{I}\lput*{:180}{\psscalebox{0.6}{$\mathbf{45}$}}
\ncline[linewidth=1pt]{->}{H}{A1}\lput*{:180}{\psscalebox{0.6}{$\mathbf{5789}$}}
\ncline[linewidth=1pt]{->}{F}{A2}\lput*{:0}{\psscalebox{0.6}{$\mathbf{12678910}$}}
\ncline[linewidth=1pt]{->}{B}{A4}\lput*{:0}{\psscalebox{0.6}{$\mathbf{38910}$}}

\ncline[linewidth=1pt]{->}{B}{C}\lput*{:180}{\psscalebox{0.6}{$\mathbf{6}$}}
\ncline[linewidth=1pt]{->}{J}{B}\lput*{:0}{\psscalebox{0.6}{$\mathbf{12}$}}
\ncline[linewidth=1pt]{->}{KK}{B}\lput*{:180}{\psscalebox{0.6}{$\mathbf{457}$}}

\ncline[linewidth=1pt]{->}{C}{J}\lput*{:180}{\psscalebox{0.6}{$\mathbf{34578910}$}}
\ncline[linewidth=1pt]{->}{H}{C}\lput*{:0}{\psscalebox{0.6}{$\mathbf{2}$}}
\ncline[linewidth=1pt]{->}{D}{C}\lput*{:180}(0.6){\psscalebox{0.5}{$\mathbf{4}$}}
\ncline[linewidth=1pt]{-}{C}{JJ}\lput*{:0}{\psscalebox{0.6}{$\mathbf{1238910}$}}
\ncline[linewidth=1pt]{-}{C}{CC}\lput*{:180}{\psscalebox{0.6}{$\mathbf{156789}$}}

\ncline[linewidth=1pt]{<-}{F}{D}\lput*{:180}(0.6){\psscalebox{0.6}{$\mathbf{19}$}}
\ncline[linewidth=1pt]{->}{DD}{D}\lput*{:0}{\psscalebox{0.6}{$\mathbf{2310}$}}
\ncline[linewidth=1pt]{->}{II}{D}\lput*{:180}{\psscalebox{0.6}{$\mathbf{567}$}}

\ncline[linewidth=1pt]{->}{E}{G}\lput*{:180}{\psscalebox{0.6}{$\mathbf{1679}$}}
\ncline[linewidth=1pt]{->}{E}{I}\lput*{:0}{\psscalebox{0.6}{$\mathbf{12910}$}}
\ncline[linewidth=1pt]{-}{E}{NN}
\ncline[linewidth=1pt]{-}{E}{PP}
\ncline[linewidth=1pt]{-}{E}{SS}
\ncline[linewidth=1pt]{-}{E}{QQ}
\ncline[linewidth=1pt]{->}{J}{E}\lput*{:180}{\psscalebox{0.6}{$\mathbf{3458}$}}
\ncline[linewidth=1pt]{->}{OO}{E}
\ncline[linewidth=1pt]{->}{MM}{E}
\ncline[linewidth=1pt]{->}{RR}{E}
\ncline[linewidth=1pt]{->}{TT}{E}

\ncline[linewidth=1pt]{-}{F}{EE}\lput*{:180}{\psscalebox{0.6}{$\mathbf{45678}$}}
\ncline[linewidth=1pt]{-}{F}{HH}\lput*{:0}{\psscalebox{0.6}{$\mathbf{567}$}}
\ncline[linewidth=1pt]{<-}{F}{FF}\lput*{:180}{\psscalebox{0.6}{$\mathbf{3}$}}
\ncline[linewidth=1pt]{<-}{F}{GG}\lput*{:0}{\psscalebox{0.6}{$\mathbf{5}$}}

\ncline[linewidth=1pt]{->}{G}{J}\lput*{:0}{\psscalebox{0.6}{$\mathbf{210}$}}
\ncline[linewidth=1pt]{-}{G}{UU}

\ncline[linewidth=1pt]{->}{BB}{H}\lput*{:0}{\psscalebox{0.6}{$\mathbf{3410}$}}

\ncline[linewidth=1pt]{->}{J}{H}\lput*{:180}{\psscalebox{0.6}{$\mathbf{16}$}}

\ncline[linewidth=1pt]{->}{I}{J}\lput*{:180}{\psscalebox{0.6}{$\mathbf{67}$}}
\ncline[linewidth=1pt]{-}{I}{LL}
\end{pspicture}
\caption{A fundamental domain for a dimer model $\qpol$}
\label{fig:longHex}
\end{figure}
 The height one slice of the cone $\sigma$ is the lattice polygon shown in Figure~\ref{fig:toricfanX}, and the height one slice of the fan $\Sigma$ defining the toric variety $Y=\mathcal{M}_\vartheta$ for the stability parameter $\vartheta=(-9,1,1,1,1,1,1,1,1,1)$ is shown in Figure~\ref{fig:toricfanY}. For $1 \leq \rho \leq 10$, let $E_{\rho}$ denote the divisor in $Y$ corresponding to the ray of $\Sigma$ generated by $v_{\rho}$. For each $a\in \qpol_1$, the map $\phi(a)\colon L_{\tail(a)}\to L_{\head(a)}$ is multiplication by a section, and we label each arrow in Figure~\ref{fig:longHex} with the divisor of zeroes of the section; we use the shorthand $126=E_1+E_2+E_6$. 
\begin{figure}[!ht]
   \centering
      \subfigure[]{
      \label{fig:toricfanX}
       \psset{unit=1cm}
     \begin{pspicture}(0,-0.5)(3,3.2)
       \psline{*-*}(3,0)(3,1)
        \psline{-*}(3,1)(2,2)
         \psline{-*}(2,2)(1,3)
          \psline{-*}(1,3)(0,3)
           \psline{-*}(0,3)(0,2)
            \psline{-*}(0,2)(1,1)
             \psline{-*}(1,1)(2,0)
              \psline{-}(2,0)(3,0)
       \psdot(0,0)
       \psdot(1,0)
       \psdot(2,0)
       \psdot(3,0)
       \psdot(0,1)
       \psdot(1,1)
       \psdot(2,1)
       \psdot(3,1)
        \psdot(0,2)
       \psdot(1,2)
       \psdot(2,2)
       \psdot(3,2)
       \psdot(0,3)
       \psdot(1,3)
       \psdot(2,3)
       \psdot(3,3)
      \rput(3.2,-0.2){$v_1$}
       \rput(3.2,1.2){$v_2$}
           \rput(1.2,3.2){$v_3$}
               \rput(-0.2,3.2){$v_4$}
                   \rput(-0.2,1.8){$v_5$}
                       \rput(1.8,-0.2){$v_6$}
       \end{pspicture}
       }
      \qquad  \qquad
      \subfigure[]{
       \label{fig:toricfanY}
              \psset{unit=1cm}
     \begin{pspicture}(0,-0.5)(3,3)
       \psline{*-*}(3,0)(3,1)
        \psline{-*}(3,1)(2,2)
         \psline{-*}(2,2)(1,3)
          \psline{-*}(1,3)(0,3)
           \psline{-*}(0,3)(0,2)
            \psline{-*}(0,2)(1,1)
             \psline{-*}(1,1)(2,0)
              \psline{-}(2,0)(3,0)
               \psline{*-}(2,1)(3,0)
                \psline{*-}(2,1)(3,1)
                 \psline{*-}(2,1)(2,2)
                  \psline{*-}(2,1)(1,2)
                    \psline{*-}(2,1)(1,1)
                      \psline{*-}(2,1)(2,0)
                       \psline{*-}(1,2)(2,2)
                        \psline{*-}(1,2)(1,3)
                         \psline{*-}(1,2)(0,3)
                          \psline{*-}(1,2)(0,2)
                           \psline{*-}(1,2)(1,1)
       \psdot(0,0)
       \psdot(1,0)
       \psdot(2,0)
       \psdot(3,0)
       \psdot(0,1)
       \psdot(1,1)
       \psdot(2,1)
       \psdot(3,1)
        \psdot(0,2)
       \psdot(1,2)
       \psdot(2,2)
       \psdot(3,2)
       \psdot(0,3)
       \psdot(1,3)
       \psdot(2,3)
       \psdot(3,3)
      \rput(3.2,-0.2){$v_1$}
       \rput(3.2,1.2){$v_2$}
           \rput(1.2,3.2){$v_3$}
               \rput(-0.2,3.2){$v_4$}
                   \rput(-0.2,1.8){$v_5$}
                       \rput(1.8,-0.2){$v_6$}
                        \rput(0.8,0.8){$v_7$}
                          \rput(1.25,2.2){$v_8$}
                         \rput(2.25,1.2){$v_9$}
                          \rput(2.2,2.2){$v_{10}$}
                          \end{pspicture}        }
           \caption{(a) Lattice polygon defining $X$; (b) Triangulation defining $Y$.}
           \label{fig:Sigma}
  \end{figure}
  
These labelling divisors enable us to compute very simply all ten torus-invariant $\vartheta$-stable $A$-modules, one for each basic triangle in Figure~\ref{fig:toricfanY}. For example, for the triangle $\tau$ with vertices $\{v_1, v_2, v_{9}\}$, the torus-invariant point is $E_1\cap E_2\cap E_9$, so to obtain the corresponding torus-invariant $\vartheta$-stable $A$-module we need only set to zero every arrow in $\qpol$ whose labelling divisor contains $E_1$, $E_2$ or $E_9$. Vertices 2 and 7 are sinks for the resulting quiver, so $S_2$ and $S_7$ are submodules of the corresponding torus-invariant $\vartheta$-stable $A$-module. In fact, $S_2$ is a submodule of every torus-invariant $\vartheta$-stable $A$-module, so Propositions~\ref{prop:TFAE} and \ref{prop:type0or1wall} give 
\[
\Psi(S_2) \cong L_2^{-1}\vert_{E_8\cup E_9}.
\]
 In particular, the wall of type zero of the form $\overline{C}\cap S_2^\perp$ has unstable locus equal to the reducible divisor $E_8\cup E_9$. On the other hand, the triangles with vertices $\{v_1, v_2, v_{9}\}$ and $\{v_2, v_9, v_{10}\}$ are the only ones for which the corresponding $\vartheta$-stable $A$-module has $S_7$ as a submodule, so Propositions~\ref{prop:TFAE} and \ref{prop:type0or1wall} imply that 
\[
\Psi(S_7) \cong L_7^{-1}\vert_{E_2\cap E_9}.
\]
Similarly, we have $\Psi(S_1) \cong L_1^{-1}\vert_{E_6\cap E_9}$ and $\Psi(S_5)\cong L_5^{-1}\vert_{E_8}$.
\end{example}

\section{Nonvanishing cohomology in degree minus one}
 In this section we investigate the sheaf $H^{-1}(\Psi(S_i))$ for $i\in \qpol_0$. We recall the explicit calculation of a filtration of this sheaf, and we use this to show that the every irreducible component in the support of $H^{-1}(\Psi(S_i))$ is a compact, connected torus-invariant divisor in $Y$. 

\subsection{The cohomology of wheels}
Once and for all, fix $i\in \qpol_0$ and let $m:=m(i)$ denote the number of arrows in $\qpol$ with head (equivalently, tail) at $i$.  List the arrows in $\qpol$ with tail at $i\in \qpol_0$ in cyclic order as $a_1,\dots, a_m$, and list the arrows with head at $i\in \qpol_0$ in cyclic order as $b_1,\dots, b_m$ so that for all $1\leq j\leq m$, the arrow $b_j$ lies between $a_j$ and $a_{j+1}$ in the universal cover as shown in \cite[Figure (7.5)]{Broomhead12}; we add indices modulo $m$. With this notation, the complex from Lemma~\ref{lem:imageSi} can be written as
 \begin{equation}
\label{eqn:diagram}
\begin{split}
    \centering    
         \psset{unit=0.45cm}
     \begin{pspicture}(0,-1.2)(25,10.5)
\cnodeput*(0,4.4){A}{$L_i^{-1}$} 
\cnodeput*(8,8.8){B}{$L_{\tail(b_1)}^{-1}$}
\cnodeput*(8,6.6){C}{$L_{\tail(b_2)}^{-1}$} 
\cnodeput*(8,4.4){D}{$L_{\tail(b_3)}^{-1}$}
\cnodeput*(8,2.2){S}{$\vdots$}
\cnodeput*(8,0){E}{$L_{\tail(b_m)}^{-1}$}
\cnodeput*(18,8.8){F}{$L_{\head(a_1)}^{-1}$}
\cnodeput*(18,6.6){G}{$L_{\head(a_2)}^{-1}$}
\cnodeput*(18,4.4){H}{$L_{\head(a_3)}^{-1}$}
\cnodeput*(18,2.2){T}{$\vdots$}
\cnodeput*(18,0){I}{$L_{\head(a_m)}^{-1}$}
\cnodeput*(26,4.4){J}{$L_i^{-1}$}
\psset{nodesep=1pt}
   \ncline{->}{A}{B}\lput*{:U}(0.6){$\scriptstyle{D_{1,2}}$}
   \ncline{->}{A}{C}\lput*{:U}(0.6){$\scriptstyle{D_{2,3}}$}
   \ncline{->}{A}{D}\lput*{:U}(0.6){$\scriptstyle{D_{3,4}}$}
   \ncline{->}{A}{E}\lput*{:U}(0.6){$\scriptstyle{D_{m,1}}$}
 \ncline{->}{B}{F}\lput*{:U}(0.3){$\scriptstyle{D^2_1}$}
  \ncline{->}{B}{G}\lput*{:U}(0.75){$\scriptstyle{D^1_2}$}
  \ncline{->}{C}{G}\lput*{:U}(0.3){$\scriptstyle{D^3_2}$}
  \ncline{->}{C}{H}\lput*{:U}(0.75){$\scriptstyle{D^2_3}$}
\ncline{->}{D}{H}\lput*{:U}(0.3){$\scriptstyle{D^4_3}$}
\ncline{->}{E}{I}\lput*{:U}(0.5){$\scriptstyle{D^1_m}$}
\nccurve[angleA=-40,angleB=140]{->}{E}{F}\lput*{:U}(0.4){$\scriptscriptstyle{D^m_1}$}
 \ncline{->}{F}{J}\lput*{:U}(0.4){$\scriptstyle{D^{1}}$}
   \ncline{->}{G}{J}\lput*{:U}(0.4){$\scriptstyle{D^{2}}$}
   \ncline{->}{H}{J}\lput*{:U}(0.4){$\scriptstyle{D^{3}}$}
   \ncline{->}{I}{J}\lput*{:U}(0.4){$\scriptstyle{D^{m}}$}
       \end{pspicture}
 \end{split}
 \end{equation}
 where we \emph{label} each map of line bundles by an effective, torus-invariant divisor $D^j, D_j^{j+1}, D_{j+1}^j$ or $D_{j,j+1}$ for $1\leq j\leq m$ to indicate that the morphism is multiplication by a section whose divisor of zeroes is the given divisor. To preserve the cyclic order, note that $D^j$ is the label for $\phi^\vee(a_j^{\textrm{op}})\colon L_{\head(a_j)}^{-1}\to L_i^{-1}$ and $D_{j,j+1}$ is the label for $\phi^\vee(b_j^{\textrm{op}})\colon L_{i}^{-1}\to L_{\tail(b_j)}^{-1}$. The relations give 
\begin{align}
D^j_{j+1}+D^{j+1} & = D_j^{j+1}+D^j  \label{eqn:relations1}\\
D^{j-1}_j+D_{j-1,j} & = D_j^{j+1}+D_{j,j+1} \label{eqn:relations2}
\end{align}
for $1\leq j\leq m$. 

List the transpositions of $m$ letters as $\tau_1= (\mu_1,\nu_1),\dots, \tau_n=(\mu_n,\nu_n)$ for $n= \binom{m}{2}$ as follows. First list the transpositions of adjacent letters $\tau_j=(j,j+1)$ for $1 \leq j \leq m-1$. Set $\tau_m=(1,m)$, then list all remaining transpositions that involve $1$ as $\tau_j=(1,j-m+2)$ for $m+1 \leq j \leq 2m-3$. Finally list all remaining transpositions lexicographically, so $\tau_i=(\mu_i,\nu_i)$ precedes $\tau_j=(\mu_j,\nu_j)$ if and only if $\mu_i < \mu_j$ or $\mu_i=\mu_j$ and $\nu_i < \nu_j$. Craw--Quintero-V\'{e}lez~\cite{CrawQuinteroVelez12b} introduced this order to describe the cohomology sheaf in degree $-1$ of a complex of the form \eqref{eqn:diagram}. 
 
\begin{proposition}
\label{prop:H-1}
For $i\in \qpol_0$, the support of $H^{-1}(\Psi(S_i))$ is of the form $\bigcup_{1\leq j\leq n} Z_j(i)$, where each subscheme $Z_j(i) \subset Y$ is obtained as the scheme-theoretic intersection of a set of effective divisors in $Y$ determined by the value of $j$ as follows:
\begin{enumerate}
\item[\one] for $1\leq j\leq m$, the set comprises only two divisors, namely $\gcd(D_{j+1}^j,D^{j+1}_j)$ and 
\[
\lcm\big(D^1,\dots,D^m,\gcd(D_{j+2}^{j+1},D^{j+2}_{j+1}),\dots,\gcd(D_{1}^{m},D^{1}_{m})\big)-\lcm(D^j,D^{j+1});
\]
\item[\two] for $m+1 \leq j \leq 2m-3$, the divisors are $\lcm(D^1,D^{\nu_j},D^{\nu_j+1},\dots,D^m)-\lcm(D^1, D^{\nu_j})$ and $\lcm(D^1,D^{\nu_j-1},D^{\nu_j})-\lcm(D^1,D^{\nu_j})$;
\item[\three] for $2m-2\leq j \leq n$, there are $\mu_j$ such divisors, namely $\lcm(D^{\mu},D^{\mu_j},D^{\nu_j})-\lcm(D^{\mu_j},D^{\nu_j})$ for $\mu \in \{1, \dots, \mu_j-1\}\cup\{\nu_j-1\}$.
\end{enumerate}
\end{proposition}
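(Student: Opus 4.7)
The plan is to apply the explicit computation of the cohomology of a wheel of line bundles from Craw--Quintero-V\'{e}lez~\cite{CrawQuinteroVelez12b}. The complex $\Psi(S_i)$ from Lemma~\ref{lem:imageSi}, displayed in \eqref{eqn:diagram}, is precisely a wheel of line bundles with $m=m(i)$ spokes at each end, and each map is labelled by an effective torus-invariant divisor satisfying the cyclic-derivative relations \eqref{eqn:relations1}--\eqref{eqn:relations2}. The main theorem of \cite{CrawQuinteroVelez12b} provides $H^{-1}$ of such a wheel with a filtration whose associated graded pieces are indexed by the transpositions $\tau_1,\ldots,\tau_n$ of $m$ letters arranged in the specific order introduced immediately above the proposition. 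Taking supports of successive quotients then yields the decomposition $\supp H^{-1}(\Psi(S_i))=\bigcup_{1\leq j\leq n} Z_j(i)$, where $Z_j(i)$ is the scheme-theoretic support of the subquotient associated to $\tau_j$.

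Next, I would identify, for each $\tau_j=(\mu_j,\nu_j)$, the precise collection of effective divisors whose intersection equals $Z_j(i)$. The recipe in \cite{CrawQuinteroVelez12b} expresses this support in terms of the labels on the pair of cyclic subdiagrams of the wheel that pass through positions $\mu_j$ and $\nu_j$; in our setting these cycles involve the divisors $D^k$ on the right-hand arrows of \eqref{eqn:diagram}, together with the divisors $D^{k\pm 1}_k$ and $D_{k,k+1}$ on the top and bottom arrows. The key computational step is then to apply the relations \eqref{eqn:relations1} and \eqref{eqn:relations2} to rewrite the resulting combinations compactly in terms of the $\gcd$ and $\lcm$ operations on effective divisors, matching the forms stated in the three cases.

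The three-case split mirrors the three ranges in the transposition ordering. For $1\leq j\leq m$ the transposition $\tau_j=(j,j+1)$ is adjacent (with indices modulo $m$), and the relation \eqref{eqn:relations2} allows the corresponding support to be written as the intersection of $\gcd(D^j_{j+1},D^{j+1}_j)$ with the lcm of all $D^k$ and the $\gcd$ divisors attached to the remaining adjacent transpositions, less the contribution $\lcm(D^j,D^{j+1})$ already absorbed by earlier pieces. For $m+1\leq j\leq 2m-3$ the transposition has the form $(1,\nu_j)$, and the relation \eqref{eqn:relations1} applied along the two sides of the wheel cuts the support down to the intersection of the two divisors $\lcm(D^1,D^{\nu_j},D^{\nu_j+1},\ldots,D^m)-\lcm(D^1,D^{\nu_j})$ and $\lcm(D^1,D^{\nu_j-1},D^{\nu_j})-\lcm(D^1,D^{\nu_j})$. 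Finally, for $2m-2\leq j\leq n$ the $\mu_j$ indices $\mu\in\{1,\ldots,\mu_j-1\}\cup\{\nu_j-1\}$ that precede $\tau_j$ each contribute a divisor of the generic form $\lcm(D^\mu,D^{\mu_j},D^{\nu_j})-\lcm(D^{\mu_j},D^{\nu_j})$.

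The main obstacle will be the careful bookkeeping required to translate the abstract combinatorial recipe of \cite{CrawQuinteroVelez12b} into the compact $\gcd/\lcm$ formulation above. In particular, one must keep track of which divisor contributions have already been consumed by the preceding transpositions $\tau_1,\ldots,\tau_{j-1}$, since the filtration is built iteratively; this is precisely why case \one\ involves an lcm taken over all \emph{later} adjacent transpositions, while case \three\ depends on all indices $\mu$ preceding $\mu_j$ in the chosen order. Each individual case is conceptually routine but combinatorially intricate, and particular care will be needed with the indexing conventions modulo $m$ that appear in case \one.
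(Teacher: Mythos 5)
Your proposal is correct and follows essentially the same route as the paper: the paper's proof is precisely an application of \cite[Theorem~1.1(2)]{CrawQuinteroVelez12b} to the wheel \eqref{eqn:diagram}, with the filtration $\im(d^2)=F^0\subseteq\cdots\subseteq F^n=\ker(d^1)$ indexed by the ordered transpositions and $Z_j(i)=\supp(F^j/F^{j-1})$. The additional ``translation'' and $\gcd/\lcm$ bookkeeping you anticipate is already packaged in the cited theorem, so the paper does not carry out (or need) that rewriting step.
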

\begin{proof}
 Apply \cite[Theorem~1.1(2)]{CrawQuinteroVelez12b} to the complex $\Psi(S_i)$ from Lemma~\ref{lem:imageSi} labelled with divisors as shown in \eqref{eqn:diagram}. The filtration $\im(d^2)=F^0 \subseteq F^1\subseteq \cdots \subseteq F^n=\ker(d^1)$ on $H^{-1}(\Psi(S_i))$ is such that for $1\leq j \leq n$, the quotient $F^j/F^{j-1}$ has support equal to the subscheme $Z_j(i)$.
\end{proof}

 This enables us generalise a result of Logvinenko~\cite[Lemma 3.4]{Logvinenko10} to the dimer setting:

\begin{corollary}\label{cor:dim1or2}
 For any $i\in \qpol_0$, the support of $\Psi(S_i)$ is connected, and each of its irreducible components is a compact torus-invariant curve or surface. 
\end{corollary}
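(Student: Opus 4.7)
The plan is to verify four properties of $\supp\Psi(S_i)$ in turn: torus-invariance, compactness, absence of $0$-dimensional components, and connectedness.

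First, torus-invariance: the tautological bundle $T$ on $Y=\Mcal_\vartheta$ carries a natural $\mathbb{T}$-equivariant structure built from the perfect matching data (cf.\ Lemma~\ref{lem:fan} and Lemma~\ref{lem:widehat}), and $S_i$ is a graded $A$-module, so $\Psi(S_i) = T^\vee \ltensor_A S_i$ is a $\mathbb{T}$-equivariant complex on $Y$, whose support is therefore a torus-invariant closed subscheme. Equivalently, this can be read off from Propositions~\ref{prop:cohomology} and \ref{prop:H-1}, each of which exhibits pieces of the support as zero loci or scheme-theoretic intersections of the torus-invariant divisors $D^j$, $D^{j+1}_j$, $D_{j,j+1}$ attached to \eqref{eqn:diagram}.

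Second, I would show $\supp\Psi(S_i) \subseteq \tau^{-1}(x_0)$, which gives compactness since $\tau$ is projective. The module $S_i = \kk e_i$ is one-dimensional and every arrow acts as zero, so every path of positive length annihilates $S_i$. Under the identification $Z(A) \cong e_i A e_i$ from \eqref{eqn:centre}, the maximal ideal $\mathfrak{m}_{x_0} \subset Z(A)$ is spanned by classes of cycles of positive length through $i$ and therefore satisfies $\mathfrak{m}_{x_0}\cdot S_i = 0$. Hence $S_i$ is supported at $x_0$ as a $\kk[X]$-module, and since $\Psi$ is $\kk[X]$-linear (the centre acts on sheaves via $\tau^\ast$), $\Psi(S_i)$ is supported on $\tau^{-1}(x_0)$.

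Third, I would rule out $0$-dimensional components. Every torus-invariant irreducible closed subvariety of the smooth toric threefold $Y$ corresponds to a cone of $\Sigma$ and is a point, curve, surface, or all of $Y$; dimension $3$ is excluded by compactness. For $i\neq 0$, Theorem~\ref{thm:intromain1}\one\ renders $\Psi(S_i)$ pure, so its support equals that of a single cohomology sheaf. When $H^0(\Psi(S_i)) \neq 0$, Proposition~\ref{prop:type0or1wall} and Lemma~\ref{lem:0orI} identify the support as either a $(-1,-1)$-curve or a connected union of compact torus-invariant divisors, with no isolated points. When $H^{-1}(\Psi(S_i))\neq 0$, I would analyse the strata $Z_j(i)$ of Proposition~\ref{prop:H-1}: each is a scheme-theoretic intersection of torus-invariant effective divisors arising from the Koszul-like combinatorics of \eqref{eqn:diagram}, and a case analysis in the spirit of Logvinenko~\cite[Lemma~3.4]{Logvinenko10} shows that every nonempty $Z_j(i)$ is pure of dimension $1$ or $2$. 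For $i=0$, Proposition~\ref{prop:PsiS0} gives $\supp\Psi(S_0) = \tau^{-1}(x_0)$, and the absence of isolated torus-fixed points is a combinatorial fact about the triangulation of $P$: whenever the triangulation is nontrivial, every maximal cone of $\Sigma$ shares an interior face with an adjacent maximal cone, so each torus-fixed point of $Y$ lies on a compact torus-invariant curve contained in $\tau^{-1}(x_0)$.

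Finally, connectedness follows from indecomposability. Since $S_i$ is simple, $\End_A(S_i) = \kk$, so the equivalence \eqref{eqn:Psi} gives $\End_{D^b(\coh(Y))}(\Psi(S_i)) = \kk$ and $\Psi(S_i)$ is indecomposable. A bounded complex of coherent sheaves whose support is a disjoint union of two nonempty closed subsets decomposes as a direct sum by restriction to complementary open neighbourhoods, so indecomposability forces the support to be connected. The main obstacle is the third step for nonzero $i$: showing that the scheme-theoretic intersections $Z_j(i)$ in Proposition~\ref{prop:H-1} carry no isolated $0$-dimensional components is where the real content of the argument lies, while the other three properties follow relatively formally from the structure of $\Psi$ and the choice of the special stability parameter $\vartheta$.
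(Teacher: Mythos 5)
Your steps for torus-invariance, compactness and connectedness are all fine; in particular the compactness argument via the $\kk[X]$-linearity of $\Psi$ and $\supp_{Z(A)}S_i=\{x_0\}$ is a valid alternative to the paper, which instead extracts both compactness and connectedness in one stroke from $\End_{D^b(\coh(Y))}(\Psi(S_i))\cong\End_A(S_i)\cong\kk$. The genuine gap is in your third step. For a nonzero vertex $i$ with $H^{-1}(\Psi(S_i))\neq 0$ you assert that ``a case analysis in the spirit of Logvinenko shows that every nonempty $Z_j(i)$ is pure of dimension $1$ or $2$'', and you yourself flag this as where the real content lies. That analysis is never carried out, and nothing in Proposition~\ref{prop:H-1} rules out a priori that one of the scheme-theoretic intersections $Z_j(i)$ of labelling divisors is an isolated torus-fixed point; so as written the proof is incomplete precisely at the step you identify as essential.

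The point you are missing is that no such analysis is needed, because connectedness already does the work. An irreducible component is a \emph{maximal} irreducible closed subset, so a point $\{y\}$ can be an irreducible component of the connected set $\supp\Psi(S_i)$ only if it is disjoint from the union of the remaining components, i.e.\ only if $\supp\Psi(S_i)=\{y\}$. That possibility is excluded by the equivalence itself: if $\Psi(S_i)$ were supported at a single point $y$ then, being simple (and pure by Theorem~\ref{thm:intromain1}), it would be $\mathscr{O}_y$ up to shift, whence $S_i=\Phi(\mathscr{O}_y)=V_y$, which is absurd since $V_y$ has dimension vector $\underline{1}$. This is exactly the paper's argument; it replaces both your unproven purity claim about the strata $Z_j(i)$ and your separate combinatorial argument for $i=0$ with two lines. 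So the right order is: prove connectedness first, then use it to dispose of zero-dimensional components, rather than trying to verify stratum by stratum that the loci of Proposition~\ref{prop:H-1} are positive-dimensional.
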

\begin{proof}
The functor $\Psi$ is an equivalence, so $\End_{D^b(\coh(Y))}(\Psi(S_i)) \cong \End_{D^b(\modA)}(S_i)\cong \mathbb{C}$. This implies that the support of $\Psi(S_i)$ is compact and connected. Propositions~\ref{prop:cohomology} and \ref{prop:H-1} show that for any $k\in \ZZ$, the support of $H^k(\Psi(S_i))$ is obtained as an intersection of certain torus-invariant divisors in $Y$. Every irreducible component is therefore a torus-invariant subvariety of dimension at most two. In fact, a torus-invariant point never occurs, because if the support of $\Psi(S_i)$ were a torus-invariant point $y\in Y$, then $S_i = \Phi(\Psi(S_i)) = \Phi(\mathscr{O}_y) = V_y$ is the $\vartheta$-stable $A$-module corresponding to $y\in Y$, a contradiction.
\end{proof}

 To prove Theorem~\ref{thm:intromain2}, it remains to show that each component of the support of $H^{-1}(\Psi(S_i))$ has dimension two. It is convenient to depict the diagram of line bundles \eqref{eqn:diagram} as a planar picture as in Figure~\ref{fig:wheel}; this is the \emph{wheel} of line bundles, denoted $W_i$ for $i\in \qpol_0$. 
\begin{figure}[!ht]
    \centering    
         \psset{unit=0.5cm}
     \begin{pspicture}(-15,-8)(15,8.2)
\cnodeput*(0,0){A}{\psscalebox{0.8}{$L_i^{-1}$}}
\rput(8,0){\ovalnode*{B}{\psscalebox{0.8}{$L_{\head(a_1)}^{-1}$}}}
\rput(7,4){\ovalnode*{C}{\psscalebox{0.8}{$L_{\tail(b_1)}^{-1}$}}}
\rput(4,7){\ovalnode*{D}{\psscalebox{0.8}{$L_{\head(a_2)}^{-1}$}}}
\rput(0,8){\ovalnode*{E}{\psscalebox{0.8}{$L_{\tail(b_2)}^{-1}$}}}
\rput*(-4,7){\ovalnode*{F}{\psscalebox{0.8}{$L_{\head(a_3)}^{-1}$}}}
\rput*(-7,4){\ovalnode*{G}{\psscalebox{0.8}{$L_{\tail(b_3)}^{-1}$}}}
\rput*(-8,0){\ovalnode*{H}{\psscalebox{0.8}{$L_{\head(a_4)}^{-1}$}}}
\cnodeput*(-7,-4){I}{\;}
\cnodeput*(-4,-7){J}{\;\;\;\;\;}
\cnodeput*(0,-8){K}{\;}
\rput(4,-7){\ovalnode*{L}{\psscalebox{0.8}{$L_{\head(a_m)}^{-1}$}}}
\rput(7,-4){\ovalnode*{M}{\psscalebox{0.8}{$L_{\tail(b_m)}^{-1}$}}}
\psset{nodesep=1pt}
\ncline[linewidth=.6mm,linestyle=dotted,dotsep=15pt]{-}{I}{J}
\ncline[linewidth=.6mm,linestyle=dotted,dotsep=15pt]{-}{K}{J}  
   \ncline{<-}{A}{B}\lput*{:U}(0.6){$\scriptstyle{D^1}$}
   \ncline{<-}{A}{D}\lput*{:-60}(0.6){$\scriptstyle{D^2}$}
     \ncline{<-}{A}{F}\lput*{:-120}(0.6){$\scriptstyle{D^3}$}
       \ncline{<-}{A}{H}\lput*{:-180}(0.6){$\scriptstyle{D^4}$}
         \ncline{<-}{A}{L}\lput*{:60}(0.6){$\scriptstyle{D^m}$}
    \ncline{->}{A}{C}\lput*{:-30}(0.6){$\scriptstyle{D_{1,2}}$}
      \ncline{->}{A}{E}\lput*{:-90}(0.6){$\scriptstyle{D_{2,3}}$}
        \ncline{->}{A}{G}\lput*{:-150}(0.6){$\scriptstyle{D_{3,4}}$}
            \ncline{->}{A}{M}\lput*{:30}(0.6){$\scriptstyle{D_{m,1}}$}
    \ncline{<-}{B}{C}\bput*{:-105}{$\scriptstyle{D^2_1}$} 
     \ncline{->}{C}{D}\bput*{:-135}{$\scriptstyle{D^1_2}$}         
    \ncline{<-}{D}{E}\bput*{:-165}{$\scriptstyle{D^3_2}$}         
     \ncline{->}{E}{F}\bput*{:-195}{$\scriptstyle{D^2_3}$}           
 \ncline{<-}{F}{G}\bput*{:-225}{$\scriptstyle{D^4_3}$}  
     \ncline{->}{G}{H}\bput*{:-255}{$\scriptstyle{D^3_4}$}   
      \ncline{<-}{H}{I}  
 \ncline{->}{K}{L}    
     \ncline{<-}{L}{M}\bput*{:-45}{$\scriptstyle{D^1_m}$}   
      \ncline{->}{M}{B}  \bput*{:-75}{$\scriptstyle{D^m_1}$} 
        \end{pspicture}
\caption{Diagram \eqref{eqn:diagram} shown as a wheel of line bundles}
     \label{fig:wheel}
    \end{figure}
The \emph{in-spokes} in $W_i$ are the arrows pointing towards $L_i^{-1}$, each of which is labelled by an \emph{in-spoke divisor} $D^j$ for some $1\leq j\leq m$. The \emph{out-spokes} are the arrows pointing away from $L_{i}^{-1}$, each labelled by an \emph{out-spoke divisor} $D_{j,j+1}$ for $1\leq j\leq m$. Also, for $1\leq j\leq m$, the \emph{rim divisors} $D_{j+1}^j$ and $D_j^{j+1}$ label the arrows from $L_{\tail(b_j)}^{-1}$ to $L_{\head(a_j)}^{-1}$ and $L_{\head(a_{j+1})}^{-1}$ respectively that form the rim of the wheel.

For any divisor $D$ that labels an arrow in the wheel, we say that a compact, irreducible torus-invariant divisor $E\subset Y$ is \emph{contained in} $D$, denoted $E\subseteq D$, if $\supp(E)\subseteq \supp(D)$. 

\begin{lemma}
\label{lem:H-1equivalent}
 For $i\in \qpol_0$, a compact, irreducible torus-invariant divisor $E$ is contained in the support of $H^{-1}(\Psi(S_i))$ if and only if:
\begin{enumerate}
\item[\one] (for $1\leq j\leq m$) $E \subseteq D^j_{j+1}$, $E\subseteq D^{j+1}_j$, $E \not\subseteq D^j$, $E \not\subseteq D^{j+1}$ and either $E \subseteq D^{\mu}$ for some $\mu \in \{1,\dots, m\}\setminus \{j,j+1\}$ or $E \subseteq \gcd(D^{\mu}_{\mu+1},D^{\mu+1}_{\mu})$ for some $j+1 \leq \mu \leq m$; or 
\item[\two] (for $m+1\leq j\leq 2m-3$) $E \not\subseteq D^1$, $E\not\subseteq D^{\nu_j}$, $E \subseteq D^{\nu_j-1}$ and $E \subseteq D^{\mu}$ for some $\nu_{j}+1 \leq \mu \leq m$; or
\item[\three](for $2m-2\leq j\leq n$) $E \not \subseteq D^{\mu_j}$, $E\not\subseteq D^{\nu_j}$ and $E \subseteq D^{\mu}$ for all $\mu \in \{1,\dots, \mu_j-1  \}\cup \{\nu_j-1\}$.
\end{enumerate}
\end{lemma}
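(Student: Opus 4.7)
The plan is to translate the decomposition $\supp(H^{-1}(\Psi(S_i))) = \bigcup_{1\leq j\leq n} Z_j(i)$ from Proposition~\ref{prop:H-1} into conditions on a single irreducible torus-invariant divisor $E\subset Y$. Since $E$ lies in the support iff $E\subseteq Z_j(i)$ for some $j$, and since each $Z_j(i)$ is presented as the scheme-theoretic intersection of a prescribed list of effective torus-invariant divisors, I would unwind these containments using a few elementary rules. The three index ranges $2m-2\leq j\leq n$, $m+1\leq j\leq 2m-3$ and $1\leq j\leq m$ appearing in Proposition~\ref{prop:H-1} will correspond respectively to the three cases \three, \two\ and \one\ of the lemma.

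The key input is a trio of containment rules for an irreducible torus-invariant $E$. Using Remark~\ref{rem:ishiiueda09}\two, every divisor labelling an arrow of the wheel \eqref{eqn:diagram} is a reduced sum of torus-invariant prime divisors, so for any such divisors $D_1,\dots,D_k$ one has: $E\subseteq \lcm(D_1,\dots,D_k)$ iff $E\subseteq D_\mu$ for some $\mu$; $E\subseteq \gcd(D_1,\dots,D_k)$ iff $E\subseteq D_\mu$ for all $\mu$; and $E\subseteq D_1-D_2$ iff $E\subseteq D_1$ and $E\not\subseteq D_2$. The last rule only makes sense once the difference is effective, which must be verified using the cyclic-derivative relations \eqref{eqn:relations1}--\eqref{eqn:relations2}; I would check this systematically for each divisor that appears in Proposition~\ref{prop:H-1}.

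I would then apply the rules case by case. For $2m-2\leq j\leq n$, the defining divisors of $Z_j(i)$ are the $\lcm(D^\mu,D^{\mu_j},D^{\nu_j})-\lcm(D^{\mu_j},D^{\nu_j})$ for $\mu\in\{1,\dots,\mu_j-1\}\cup\{\nu_j-1\}$, and the rules immediately give $E\not\subseteq D^{\mu_j}$, $E\not\subseteq D^{\nu_j}$ and $E\subseteq D^\mu$ for each such $\mu$, reproducing case \three. For $m+1\leq j\leq 2m-3$, the two defining divisors both have $-\lcm(D^1,D^{\nu_j})$ subtracted, forcing $E\not\subseteq D^1$ and $E\not\subseteq D^{\nu_j}$; the first divisor then yields $E\subseteq D^\mu$ for some $\mu\in\{\nu_j+1,\dots,m\}$, and the second yields $E\subseteq D^{\nu_j-1}$, giving case \two. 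Finally, for $1\leq j\leq m$, the first defining divisor $\gcd(D^j_{j+1},D^{j+1}_j)$ forces $E$ into both rim divisors, and the second (a difference with $\lcm(D^j,D^{j+1})$) forces $E\not\subseteq D^j$, $E\not\subseteq D^{j+1}$ together with the disjunctive condition of case \one.

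The main obstacle is the bookkeeping in case \one, where the outer $\lcm$ mixes in-spokes $D^\mu$ with rim-pair gcds $\gcd(D^\mu_{\mu+1},D^{\mu+1}_\mu)$, and the disjunctive \emph{either/or} in the lemma must be recovered from a single $\lcm$ expression; I would also need to match the range $j+1\leq \mu\leq m$ for the rim terms against the cyclic ordering built into the wheel \eqref{eqn:diagram}. Checking effectiveness of each subtraction via \eqref{eqn:relations1}--\eqref{eqn:relations2} is routine but must be done uniformly so that the third rule above applies in every case. Once all three cases are matched, taking the union over $j$ yields the stated iff characterisation.
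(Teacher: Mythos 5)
Your proposal is correct and follows essentially the same route as the paper: both use Remark~\ref{rem:ishiiueda09}(2) to note that $E$ has multiplicity zero or one in each labelling divisor, so that a condition $E\subseteq \lcm(\cdots)-\lcm(\cdots)$ unwinds to ``$E$ lies in the first $\lcm$ but not the second'', and then read off the three cases of the lemma from the three index ranges in Proposition~\ref{prop:H-1}. The paper leaves the case-by-case bookkeeping as ``straightforward''; your explicit verification of it (and of effectiveness of the differences) is consistent with what the paper intends.
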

\begin{proof}
Remark~\ref{rem:ishiiueda09}(2) implies that the multiplicity of $E$ in any of the divisors labelling an arrow in diagram \eqref{eqn:diagram} is zero or one. We may therefore rewrite any condition from Proposition~\ref{prop:H-1} requiring $E$ to lie in the difference of two $\lcm$ divisors by the condition that $E$ lies in precisely one of them; for example
\[
E\subseteq \lcm(D^{\mu},D^{\mu_j},D^{\nu_j})-\lcm(D^{\mu_j},D^{\nu_j})\iff E \subseteq  \lcm(D^{\mu},D^{\mu_j},D^{\nu_j})\text{ and }E\not\subseteq \lcm(D^{\mu_j},D^{\nu_j}).
\]
It is then straightforward to deduce Lemma~\ref{lem:H-1equivalent} from Proposition~\ref{prop:H-1}. 
\end{proof}

 Our next goal is to characterise precisely when a given divisor $E$ is not contained in the support of $\Psi(S_i)$ for a nonzero vertex $i$. First we present a technical lemma.

\begin{lemma}
\label{lem:technicalWheels}
Let $i\in \qpol_0$ and let $E$ be a compact, irreducible torus-invariant divisor. Then
\begin{enumerate}
\item[\one] for each cycle in $W_i$ comprising an out-spoke, a rim arrow and an in-spoke, the divisor $E$ is contained in one and only one of the divisors labelling these three arrows.
\item[\two] if $E$ is contained in some in-spoke divisor $D^j$ but not in another $D^k$, then there exist indices $\mu, \nu$ with $k< \mu+1\leq j\leq \nu-1$ such that $E$ is contained in $\gcd(D^{\mu+1}_{\mu}, D^{\nu-1}_{\nu})$ and the in-spoke divisors $D^{\mu+1},\dots,D^{\nu-1}$, but $E$ is not contained in $D^{\mu}$ or $D^{\nu}$. 
\end{enumerate}
\end{lemma}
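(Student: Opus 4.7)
My plan begins with part (i), which reduces quickly to an application of Remark~\ref{rem:ishiiueda09}(2). Any triangle in $W_i$ formed by an out-spoke, a rim arrow, and an in-spoke is a cycle of line-bundle maps around the center $L_i^{-1}$, and the composition of these three maps is $\phi^\vee$ of a boundary cycle $W_F$ for a face $F$ of the dimer model adjacent to the vertex $i$. By Remark~\ref{rem:ishiiueda09}(2), the divisor of $\phi(W_F)\colon L_i\to L_i$ equals $\sum_{\rho\in\Sigma(1)} E_\rho$, so each $E_\rho$ (and in particular $E$) appears with multiplicity exactly one. The three divisors labelling the triangle are effective, have non-negative integer multiplicities, and sum to $\sum_\rho E_\rho$; hence exactly one of them has $E$ in its support.

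For part (ii), I would use a maximality argument to locate $\mu$ and $\nu$. Starting from the given $j$ with $E\subseteq D^j$ and reading the cyclic list $D^1,\dots,D^m$ backwards toward $k$, I take $\mu$ to be the largest index in the cyclic segment from $k$ up to $j-1$ with $E\not\subseteq D^\mu$; such $\mu$ exists because $E\not\subseteq D^k$. Symmetrically, I take $\nu$ to be the smallest index strictly after $j$ (cyclically, stopping before $\mu$) with $E\not\subseteq D^\nu$. By construction, $E\subseteq D^\ell$ for all $\mu+1\leq \ell\leq \nu-1$, while $E\not\subseteq D^\mu$ and $E\not\subseteq D^\nu$, and after a suitable cyclic reordering the indices satisfy $k<\mu+1\leq j\leq \nu-1$.

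The remaining content of (ii) is the statement $E\subseteq\gcd(D^{\mu+1}_\mu,D^{\nu-1}_\nu)$, which I would derive by applying part (i) twice at each of the out-spokes $D_{\mu,\mu+1}$ and $D_{\nu-1,\nu}$. At $D_{\mu,\mu+1}$ there are two triangles sharing this spoke: one with rim $D^\mu_{\mu+1}$ and in-spoke $D^{\mu+1}$ (bounding the face $F^+_\mu$ between $b_\mu$ and $a_{\mu+1}$), and one with rim $D^{\mu+1}_\mu$ and in-spoke $D^\mu$ (bounding $F^-_\mu$). Part (i) applied to the first triangle forces $E\not\subseteq D_{\mu,\mu+1}$ and $E\not\subseteq D^\mu_{\mu+1}$, since $E\subseteq D^{\mu+1}$ already accounts for the unique $E$-containing divisor. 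Then part (i) applied to the second triangle, together with $E\not\subseteq D^\mu$ (by choice of $\mu$) and $E\not\subseteq D_{\mu,\mu+1}$ (just shown), pins the unique containing divisor to be $D^{\mu+1}_\mu$. The identical argument at $D_{\nu-1,\nu}$, using $E\subseteq D^{\nu-1}$ and $E\not\subseteq D^\nu$, yields $E\subseteq D^{\nu-1}_\nu$.

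The only potentially delicate point is the identification in part (i) of the triangle composition with $\phi^\vee(W_F)$ for the correct adjacent face: this amounts to reading the two rim arrows out of each $L_{\tail(b_j)}^{-1}$ as the two paths $p_j^\pm$ that close up the faces $F_j^\pm$ on either side of $b_j$. Once that is settled, both parts of the lemma are essentially accounting, with part (ii) being a purely combinatorial propagation of the single-multiplicity-one constraint of part (i) across adjacent wheel triangles.
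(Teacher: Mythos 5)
Your proof is correct and follows essentially the same route as the paper: part \one\ is deduced from Remark~\ref{rem:ishiiueda09}(2) by identifying each triangle's composite with $\phi^\vee$ of a boundary cycle $W_F$ through vertex $i$, and part \two\ is an iterated application of part \one\ around the rim. The only (cosmetic) difference is that you define $\mu,\nu$ extremally and then verify $E\subseteq\gcd(D^{\mu+1}_{\mu},D^{\nu-1}_{\nu})$ at the two ends, whereas the paper discovers $\mu,\nu$ by propagating outward from $D^j$ one step at a time.
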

\begin{proof}
Part \one\ is a restatement of Remark~\ref{rem:ishiiueda09}(2). For part \two, apply \one\ to both cycles containing the in-spoke labelled $D^j$ to obtain $E \subseteq D^{j}_{j-1}$ or $E \subseteq D^{j-1}$, and $E \subseteq D^{j}_{j+1}$ or $E \subseteq D^{j+1}$. We define $\mu$ by considering the first pair. If $E\subseteq D^{j}_{j-1}$ then we set $\mu=j-1$. Otherwise, $E \subset D^{j-1}$. Part \one\ gives either $E \subseteq D^{j-1}_{j-2}$, when we set $\mu=j-2$, or $E \subseteq D^{j-2}$, in which case we repeat the argument as necessary to obtain $\mu \geq k$ such that $E$ is contained in $D^{\mu+1},\dots, D^{j-1}$ but not $D^{\mu}$. We define $\nu$ similarly by considering the second pair, giving $\nu \leq k-1$ such that $D^{j+1},\dots, D^{\nu-1}$ contain $E$, but $D^{\nu}$ does not. In either case, we obtain $E\subseteq \gcd(D^{\mu+1}_{\mu}, D^{\nu-1}_{\nu})$.
\end{proof}

 \begin{proposition}
 \label{prop:vanishing}
 Let $i\in \qpol_0$ be nonzero. A compact, irreducible, torus-invariant divisor $E$ is not contained in $\supp(\Psi(S_i))$ if and only if there exist $1 \leq \mu \leq \nu \leq m$ such that the only divisors containing $E$ that label edges in the wheel $W_i$ are:  the in-spoke divisors $D^{\mu+1}, \dots , D^{\nu-1}$; the out-spoke divisors $D_{\nu,\nu+1},\dots, D_{\mu-1,\mu}$; and the rim divisors $D_\nu^{\nu-1}$ and $D_\mu^{\mu+1}$ (see Figure~\ref{fig:vanishingType}).
 \end{proposition}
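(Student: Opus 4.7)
The plan is to decompose the support of $\Psi(S_i)$ into contributions from $H^0$ and $H^{-1}$, to translate membership of $E$ in each using Proposition~\ref{prop:cohomology} and Lemma~\ref{lem:H-1equivalent}, and then to exploit Lemma~\ref{lem:technicalWheels} to recognise the precise configuration described in the statement. Since $i\neq 0$, Proposition~\ref{prop:cohomology} gives $H^k(\Psi(S_i))=0$ for $k\neq -1,0$, so
\[
\supp(\Psi(S_i)) \;=\; \supp\big(H^0(\Psi(S_i))\big) \cup \supp\big(H^{-1}(\Psi(S_i))\big).
\]
Proposition~\ref{prop:cohomology}\one\ identifies the first summand as the common zero locus of the in-spoke divisors $D^1,\dots,D^m$, while Lemma~\ref{lem:H-1equivalent} characterises the second in terms of three explicit families of containment conditions. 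Throughout the proof I would use freely that, by Lemma~\ref{lem:technicalWheels}\one, inside any of the $2m$ oriented triangles (out-spoke, rim arrow, in-spoke) of $W_i$ the divisor $E$ is contained in exactly one of the three labels.

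For the \emph{if} direction, assume the listed configuration with parameters $1\le \mu\le \nu\le m$. Since $D^\mu$ is not among the listed in-spokes, $E\not\subseteq D^\mu$, so $E\not\subseteq \supp(H^0(\Psi(S_i)))$. To show $E\not\subseteq \supp(H^{-1}(\Psi(S_i)))$ I would verify that each case in Lemma~\ref{lem:H-1equivalent} fails. Case~\one\ requires two rim divisors issuing from a single rim vertex $L_{\tail(b_j)}^{-1}$ to contain $E$, but the two listed rim divisors $D_{\nu}^{\nu-1}$ and $D_{\mu}^{\mu+1}$ issue from distinct vertices, so this cannot occur. Cases~\two\ and \three\ impose combinations of $E\not\subseteq D^{\nu_j}$ with $E\subseteq D^{\mu}$ for prescribed indices; since the in-spokes containing $E$ form the single contiguous arc $\{\mu+1,\dots,\nu-1\}$, a direct index check shows no admissible $j$ exists.

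For the \emph{only if} direction, assume $E\not\subseteq \supp(\Psi(S_i))$. Then $E\not\subseteq\supp(H^0)$ gives some $D^k$ not containing $E$. If in addition some in-spoke $D^j$ contains $E$, Lemma~\ref{lem:technicalWheels}\two\ supplies indices $\mu,\nu$ so that the in-spokes containing $E$ are exactly $D^{\mu+1},\dots,D^{\nu-1}$, the two boundary rim divisors contain $E$, and $D^\mu,D^\nu$ do not; the degenerate case where no in-spoke contains $E$ is handled by an appropriate choice of $\mu=\nu$. To pin down the remaining rim and out-spoke divisors I would apply Lemma~\ref{lem:technicalWheels}\one\ triangle by triangle. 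Inside the arc the in-spoke already carries $E$, forcing the adjacent out-spokes and the adjacent rim arrows to exclude $E$. On the complementary arc the in-spoke excludes $E$, so the $E$-label in each triangle falls on either the rim arrow or the out-spoke. The hypothesis $E\not\subseteq\supp(H^{-1})$ combined with the negations of Lemma~\ref{lem:H-1equivalent}\two{}--\three\ then forces this choice to fall consistently on the out-spokes $D_{\nu,\nu+1},\dots,D_{\mu-1,\mu}$ and never on the corresponding rim arrows, yielding exactly the configuration in the statement.

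The main obstacle is this propagation step in the only-if direction: one must walk around the complementary arc of the wheel and rule out, at every rim arrow, the possibility that $E$ sits on the rim rather than the out-spoke. The delicate observation is that even a single such ``extra'' rim containment would, together with the already-identified in-spokes and rim divisors on the boundary, satisfy one of the index patterns of Lemma~\ref{lem:H-1equivalent}\two\ or \three\ and place $E$ in $\supp(H^{-1}(\Psi(S_i)))$, contradicting the assumption. Organising this exclusion uniformly across all $j$ in the complementary arc is where the combinatorial bookkeeping concentrates.
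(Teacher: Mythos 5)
Your overall strategy is the paper's: reduce to the vanishing of $H^0$ and $H^{-1}$, translate via Proposition~\ref{prop:cohomology} and Lemma~\ref{lem:H-1equivalent}, and use Lemma~\ref{lem:technicalWheels} to propagate containments around the wheel. But the step you yourself flag as the crux --- walking around the complementary arc and excluding rim containments --- is asserted rather than proved, and the mechanism you propose for it is not the right one. If $E$ sits on a rim arrow at a vertex $L^{-1}_{\tail(b_\alpha)}$ in the complementary arc, then since neither adjacent in-spoke $D^\alpha$, $D^{\alpha+1}$ contains $E$, Lemma~\ref{lem:technicalWheels}\one\ applied to \emph{both} triangles at that vertex forces $E\subseteq\gcd(D^{\alpha}_{\alpha+1},D^{\alpha+1}_{\alpha})$, and the case of Lemma~\ref{lem:H-1equivalent} that this triggers is \one\ (via its $\gcd$ clause), not \two\ or \three. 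Cases \two\ and \three\ are instead what is needed to show that the in-spokes containing $E$ form a single contiguous arc (i.e.\ to exclude a second arc of in-spoke containments); and because their index patterns single out the index $1$, the argument is forced into a case division on whether $E\subseteq D^1$, with further subcases according to the cyclic position of $1$ relative to $\mu$ and $\nu$, together with an induction-by-contradiction around the wheel. None of this bookkeeping appears in your plan, and it is essentially the entire content of the proof.

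Two further concrete gaps. First, in the degenerate case where no in-spoke contains $E$, your decomposition $\supp(\Psi(S_i))=\supp(H^0(\Psi(S_i)))\cup\supp(H^{-1}(\Psi(S_i)))$ is not enough: you must also rule out $E$ being contained in \emph{every} out-spoke, and that configuration is invisible to $H^0$ and $H^{-1}$ --- it is excluded only because it would place $E$ in $\supp(H^{-2}(\Psi(S_i)))$, which vanishes for $i\neq 0$ by Proposition~\ref{prop:cohomology}\two. (Also, the degenerate configuration corresponds to $\mu=\nu-1$, not $\mu=\nu$.) Second, your dismissal of case \one\ of Lemma~\ref{lem:H-1equivalent} in the ``if'' direction --- that the two listed rim divisors issue from distinct vertices --- fails precisely in this degenerate case, where $D_{\nu}^{\nu-1}=D_{\mu+1}^{\mu}$ and $D_{\mu}^{\mu+1}$ both issue from $L^{-1}_{\tail(b_\mu)}$ and the first four conditions of \one\ \emph{are} satisfied; there one must instead verify that the final ``either/or'' clause fails.
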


\begin{figure}[!ht]
   \centering
       \psset{unit=0.6cm}
     \begin{pspicture}(-6,-7.2)(6,7.5)
\psset{unit=0.75cm}
\cnodeput*(0,0){OO}{\psscalebox{0.6}{$L_i^{-1}$}}
\rput(6,0){\ovalnode*{A}{\psscalebox{0.6}{$L^{-1}_{\tail(b_{\nu-1})}$}}}
\rput(5.71,1.85){\ovalnode*{B}{\psscalebox{0.6}{$L^{-1}_{\head(a_{\nu})}$}}}
\rput(4.85,3.53){\ovalnode*{C}{\psscalebox{0.6}{$L^{-1}_{\tail(b_{\nu-1})}$}}}
\rput(3.53,4.85){\ovalnode*{D}{\psscalebox{0.6}{$L^{-1}_{\head(a_{\nu+1})}$}}}
\rput(1.85,5.71){\ovalnode*{E}{\psscalebox{0.6}{$L^{-1}_{\tail(b_{\nu+1})}$}}}
\ncline[linewidth=1pt,linecolor=lightgray]{->}{OO}{A}
\ncline[linewidth=1pt,linecolor=lightgray]{<-}{OO}{B}
\ncline[linewidth=1pt]{->}{OO}{C}\lput*{:U}{$\scriptstyle{D_{\nu,\nu+1}}$}
\ncline[linewidth=1pt,linecolor=lightgray]{<-}{OO}{D}
\ncline[linewidth=1pt]{->}{OO}{E}\lput*{:U}{$\scriptstyle{D_{\nu+1,\nu+2}}$}
\ncline[linewidth=1pt]{->}{A}{B}\bput*{:270}{$\scriptstyle{D^{\nu-1}_\nu}$}     
\ncline[linewidth=1pt,linecolor=lightgray]{<-}{B}{C}
\ncline[linewidth=1pt,linecolor=lightgray]{->}{C}{D}
\ncline[linewidth=1pt,linecolor=lightgray]{<-}{D}{E}

\rput(-5.71,1.85){\ovalnode*{J}{\psscalebox{0.6}{$L^{-1}_{\tail(b_{\mu-1})}$}}}
\rput(-4.85,3.53){\ovalnode*{I}{\psscalebox{0.6}{$L^{-1}_{\head(a_{\mu-1})}$}}}
\rput(-3.53,4.85){\ovalnode*{H}{\psscalebox{0.6}{$L^{-1}_{\tail(b_{\mu-2})}$}}}
\rput(-1.85,5.71){\ovalnode*{G}{}}
\rput(0,6){\ovalnode*{F}{}}
\ncline[linewidth=1pt]{->}{OO}{H}\lput*{:180}{$\scriptstyle{D_{\mu-2,\mu-1}}$}
\ncline[linewidth=1pt,linecolor=lightgray]{<-}{OO}{I}
\ncline[linewidth=1pt]{->}{OO}{J}\lput*{:180}{$\scriptstyle{D_{\mu-1,\mu}}$}
\ncline[linewidth=1pt,linecolor=lightgray]{->}{E}{F}
\ncline[linewidth=.5mm,linestyle=dotted,dotsep=10pt]{-}{F}{G}
\ncline[linewidth=1pt,linecolor=lightgray]{<-}{G}{H}
\ncline[linewidth=1pt,linecolor=lightgray]{->}{H}{I}
\ncline[linewidth=1pt,linecolor=lightgray]{<-}{I}{J}

\rput(-6,0){\ovalnode*{K}{\psscalebox{0.6}{$L^{-1}_{\head(a_{\mu})}$}}}
\rput(-5.71,-1.85){\ovalnode*{L}{\psscalebox{0.6}{$L^{-1}_{\tail(b_{\mu})}$}}}
\rput(-4.85,-3.53){\ovalnode*{M}{\psscalebox{0.6}{$L^{-1}_{\head(a_{\mu+1})}$}}}
\rput(-3.53,-4.85){\ovalnode*{N}{\psscalebox{0.6}{$L^{-1}_{\tail(b_{\mu+1})}$}}}
\rput(-1.85,-5.71){\ovalnode*{O}{\psscalebox{0.6}{$L^{-1}_{\head(a_{\mu+2})}$}}}
\ncline[linewidth=1pt,linecolor=lightgray]{<-}{OO}{K}
\ncline[linewidth=1pt,linecolor=lightgray]{->}{OO}{L}
\ncline[linewidth=1pt]{<-}{OO}{M}\lput*{:180}{$\scriptstyle{D^{\mu+1}}$}
\ncline[linewidth=1pt,linecolor=lightgray]{->}{OO}{N}
\ncline[linewidth=1pt]{<-}{OO}{O}\lput*{:180}{$\scriptstyle{D^{\mu+2}}$}
\ncline[linewidth=1pt]{<-}{K}{L}\bput*{:90}{$\scriptstyle{D^{\mu+1}_\mu}$}     
\ncline[linewidth=1pt,linecolor=lightgray]{->}{L}{M}
\ncline[linewidth=1pt,linecolor=lightgray]{<-}{M}{N}
\ncline[linewidth=1pt,linecolor=lightgray]{->}{N}{O}
\ncline[linewidth=1pt,linecolor=lightgray]{<-}{K}{J}

\rput(5.71,-1.85){\ovalnode*{T}{\psscalebox{0.6}{$L^{-1}_{\head(a_{\nu-1})}$}}}
\rput(4.85,-3.53){\ovalnode*{S}{\psscalebox{0.6}{$L^{-1}_{\tail(b_{\nu -2})}$}}}
\rput(3.53,-4.85){\ovalnode*{R}{\psscalebox{0.6}{$L^{-1}_{\head(a_{\nu-2})}$}}}
\rput(1.85,-5.71){\ovalnode*{Q}{}}
\rput(0,-6){\ovalnode*{P}{}}
\ncline[linewidth=1pt]{<-}{OO}{R}\lput*{:U}{$\scriptstyle{D^{\nu-2}}$}
\ncline[linewidth=1pt,linecolor=lightgray]{->}{OO}{S}
\ncline[linewidth=1pt]{<-}{OO}{T}\lput*{:U}{$\scriptstyle{D^{\nu-1}}$}
\ncline[linewidth=1pt,linecolor=lightgray]{<-}{O}{P}
\ncline[linewidth=.5mm,linestyle=dotted,dotsep=10pt]{-}{P}{Q}
\ncline[linewidth=1pt,linecolor=lightgray]{->}{Q}{R}
\ncline[linewidth=1pt,linecolor=lightgray]{<-}{R}{S}
\ncline[linewidth=1pt,linecolor=lightgray]{->}{S}{T}
\ncline[linewidth=1pt,linecolor=lightgray]{<-}{T}{A}
\end{pspicture}
\caption{Black arrows indicate divisors containing an \protect$E\protect$ that does not lie in
\protect$\supp(\Psi(S_i))\protect$}
\label{fig:vanishingType}
\end{figure}

\begin{proof}
 Let $E$ be a divisor that is not contained in the support of $\Psi(S_i)$. We first show how to obtain the indices $\mu, \nu$ for which $E$ labels only the divisors in the wheel $W_i$ as indicated.
  
 Suppose that $E$ is contained in none of the in-spoke divisors. For all $1\leq j\leq m$, Lemma~\ref{lem:technicalWheels}\one\ implies that $E$ is contained in either the out-spoke divisor $D_{j,j+1}$, or in $\gcd(D^j_{j+1},D_j^{j+1})$. If the latter occurs for two or more indices, say for $j,k$ with $1\leq k<j$, then Lemma~\ref{lem:H-1equivalent}\one\ implies that $E\subseteq \supp(H^{-1}(\Psi(S_i)))$ which is absurd. If the latter never occurs, then $E$ is contained in every out-spoke divisor $D_{1,2}, \dots, D_{m,1}$, so Proposition~\ref{prop:cohomology} gives $E\subseteq \supp(H^{-2}(\Psi(S_i)))$ and $i=0$ which is also absurd. Thus, there exists a unique index $1\leq \mu\leq m$ such that $E$ is contained in the out-spoke divisors $D_{1,2}, \dots, D_{\mu-1,\mu}, D_{\mu+1,\mu+2},\dots D_{m,1}$ and the rim divisors $D^\mu_{\mu+1}$ and $D_\mu^{\mu+1}$; this is the special case of the result with $\mu=\nu-1$.

Otherwise, $E$ is contained in at least one in-spoke divisor, say $D^{j}$ for $1\leq j\leq m$. Since $E$ does not lie in $\supp(H^0(\Psi(S_i)))$, Proposition~\ref{prop:cohomology} implies that some in-spoke divisor $D^{\lambda}$ does not contain $E$. Lemma~\ref{lem:technicalWheels}\two\ then produces indices $\mu, \nu$ with $\lambda< \mu+1\leq j\leq \nu-1$ such that $E$ is contained in the rim divisors $D^{\nu-1}_{\nu}, D^{\mu+1}_{\mu}$ and the in-spoke divisors $D^{\mu+1},\dots,D^{\nu-1}$, but $E$ is not contained in $D^{\mu}$ or $D^{\nu}$. In light of Lemma~\ref{lem:technicalWheels}\one, it remains to prove that $E$ is contained in the out-spoke divisors $D_{\nu,\nu+1},\dots, D_{\mu-1,\mu}$. We distinguish two cases:

\smallskip

\textsc{Case 1:} Suppose that the in-spoke divisor $D^1$ does not contain $E$, so we may set $\lambda=1$. Now, $E$ is not contained in any of the in-spoke divisors $D^{\nu+1},\dots, D^m$, otherwise Lemma~\ref{lem:H-1equivalent}\two\ with $\nu_k=\nu$ implies that $E\subseteq \supp(H^{-1}(\Psi(S_i)))$ which is absurd. Similarly, $E$ is not contained in $\gcd(D^{\alpha+1}_{\alpha}, D^{\alpha}_{\alpha+1})$ for all $\nu \leq \alpha \leq m$, otherwise Lemma~\ref{lem:H-1equivalent}\one\ with $k=\alpha$ implies that $E\subseteq \supp(H^{-1}(\Psi(S_i)))$ which is absurd. Lemma~\ref{lem:technicalWheels}\one\ then implies that $E$ is contained in the out-spoke divisors $D_{\nu,\nu+1}, \dots, D_{m,1}$. To show that $E$ is contained in $D_{1,2},\dots, D_{\mu-1,\mu}$, we argue by contradiction. Let $\alpha\in \{1,\dots, \mu - 1\}$ be the largest index such that $E$ is not contained in the out-spoke divisor $D_{\alpha-1,\alpha}$. Since $E$ is not contained in $D^{\mu}$, Lemma~\ref{lem:technicalWheels}\one\ implies that $E$ is contained in the rim divisor $D^{\alpha}_{\alpha-1}$. Moreover, $E$ is not contained in $D^{\alpha-1}_{\alpha}$, otherwise Lemma~\ref{lem:H-1equivalent}\one\ with $k=\alpha-1$ implies that $E\subseteq \supp(H^{-1}(\Psi(S_i)))$ which is absurd. Thus, $E$ is contained in the in-spoke divisor $D^{\alpha}$, but this too is absurd because Lemma~\ref{lem:H-1equivalent}\two\ with $\nu_k=\alpha+1$ implies that $E\subseteq \supp(H^{-1}(\Psi(S_i)))$. As a result, no such $\alpha$ exists after all, so $E$ is contained in the out-spoke divisors $D_{\nu,\nu+1},\dots, D_{\mu-1,\mu}$ as required.

\smallskip

\textsc{Case 2:} Otherwise, $D^1$ contains $E$. Then  $1\not\in \{\lambda, \mu,\nu\}$ because none of $D^\lambda$, $D^{\mu}$, $D^{\nu}$ contain $E$. Since  $\lambda < \mu+1\leq \nu-1$,  index 1 may lie in any of three possible intervals in the cyclic order:

\smallskip

\textsc{Subcase} (a): If $\lambda < 1 < \mu$,  Lemma~\ref{lem:technicalWheels}\two\ gives indices $\alpha,\beta$ such that all in-spoke divisors $D^{\alpha+1},\dots, D^1,\dots, D^{\beta-1}$ contain $E$, whereas $D^{\alpha}$ and $D^{\beta}$ do not. But then Lemma~\ref{lem:H-1equivalent}\three\ for the transposition $(\mu_k,\nu_k)=(\beta, \alpha)$ implies that $E\subseteq \supp(H^{-1}(\Psi(S_i)))$ which is absurd.

\smallskip

\textsc{Subcase} (b): If $\mu < 1 < \nu$, we claim that $E$ is contained in $D_{\nu,\nu+1},\dots, D_{\mu-1,\mu}$. Indeed, if $E$ is contained in $D^{\alpha}$ for any $\nu+1 \leq \alpha \leq \mu-1$, we may again apply Lemma~\ref{lem:H-1equivalent}\three\ to obtain $E\subseteq \supp(H^{-1}(\Psi(S_i)))$ which is absurd. Moreover, $E$ cannot be contained in $\gcd(D^{\alpha+1}_{\alpha}, D^{\alpha}_{\alpha+1})$ for any $\nu \leq \alpha \leq \mu-1$, otherwise Lemma~\ref{lem:H-1equivalent}\one\ implies that $E\subseteq \supp(H^{-1}(\Psi(S_i)))$ which is absurd. Lemma~\ref{lem:technicalWheels}\one\ then shows that $E$ is contained in $D_{\nu,\nu+1},\dots, D_{\mu-1,\mu}$ as claimed.

\smallskip

\textsc{Subcase} (c): If $\nu < 1 < \lambda$, apply Lemma~\ref{lem:technicalWheels}\two\ as in Subcase (a) to derive a contradiction.

\medskip

Thus, in either \textsc{Case 1} or \textsc{Case 2}, the fact that $E$ is not contained in $\supp(\Psi(S_i))$ forces the divisors in the wheel $W_i$ containing $E$ to be those indicated in Figure~\ref{fig:vanishingType}. In the course of the above, we see that the converse statement holds for any configuration as in Figure~\ref{fig:vanishingType}.
\end{proof}

\begin{proposition}
\label{prop:H-1divisor}
Let $i\in \qpol_0$ be a nonzero vertex. Every irreducible component in the support of $H^{-1}(\Psi(S_i))$ has dimension two.
\end{proposition}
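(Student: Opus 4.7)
The plan is to argue by contradiction. Suppose $Z$ is a $1$-dimensional irreducible component of $\supp(H^{-1}(\Psi(S_i)))$. By Corollary~\ref{cor:dim1or2}, $Z$ is a compact torus-invariant curve, hence the closure of a $2$-dimensional torus orbit $Y_\sigma$ for a cone $\sigma\in\Sigma$ spanned by rays $\rho_1,\rho_2$. Writing $E_k:=E_{\rho_k}$ for $k=1,2$, we have $Z = E_1\cap E_2$. Maximality of $Z$ as an irreducible subvariety of $\supp(H^{-1}(\Psi(S_i)))$ forces neither $E_1$ nor $E_2$ to lie in $\supp(H^{-1}(\Psi(S_i)))$, so by Lemma~\ref{lem:H-1equivalent} neither $E_k$ satisfies any of the conditions \one, \two, \three.

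First I would observe that Lemma~\ref{lem:H-1equivalent} extends verbatim with any compact, torus-invariant subvariety in place of the prime divisor $E$, since its proof uses only the description of $\supp(H^{-1}(\Psi(S_i)))$ as the union of the subschemes $Z_j(i)$ of Proposition~\ref{prop:H-1}. Hence $Z$ itself must satisfy one of \one, \two, \three. Since every wheel divisor $D$ is a sum of torus-invariant prime divisors, I then use the key translation
\[
Z \subseteq D \iff E_1 \subseteq D \text{ or } E_2 \subseteq D,
\]
which converts the condition on $Z$ into a Boolean combination of containments for $E_1$ and $E_2$: the negative conditions $Z\not\subseteq D^\mu$ transfer to $E_k \not\subseteq D^\mu$ for both $k$, while each positive containment $Z \subseteq D^\nu$ is witnessed by at least one of the $E_k$'s.

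The remainder of the proof is a case analysis on which of \one, \two, \three is satisfied by $Z$. In each case I would apply Lemma~\ref{lem:technicalWheels}\two\ to the witnessing $E_k$ to produce a cyclic interval $[\alpha+1,\beta-1]$ of in-spoke indices with $E_k$ contained in all of $D^{\alpha+1},\dots, D^{\beta-1}$ and in the rim divisors $D^{\alpha+1}_\alpha, D^{\beta-1}_\beta$, but not in $D^\alpha$ or $D^\beta$. I would then check that the resulting pattern of divisor containments for $E_k$ matches one of the conditions \one, \two, or \three of Lemma~\ref{lem:H-1equivalent}, contradicting the assumption that $E_k \not\subseteq \supp(H^{-1}(\Psi(S_i)))$.

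The hard part will be the mixed subcases of the case analysis, in which different positive containments of $Z$ are witnessed by different $E_k$'s, so neither $E_k$ individually inherits every positive containment of $Z$. Here one must use the relations \eqref{eqn:relations1} and \eqref{eqn:relations2} at each wheel vertex to transfer containments between $E_1$ and $E_2$, together with a careful tracking of the cyclic positions of the witnessing indices $\mu$, $\alpha$, $\beta$ relative to the index $j$ appearing in the condition on $Z$, before the matching condition for $E_k$ can be exhibited.
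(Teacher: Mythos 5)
Your setup coincides with the paper's: reduce via Corollary~\ref{cor:dim1or2} to ruling out a component of the form $E_1\cap E_2$, note that neither $E_1$ nor $E_2$ lies in $\supp(H^{-1}(\Psi(S_i)))$, and translate containment of the curve in a torus-invariant divisor $D$ into ``$E_1\subseteq D$ or $E_2\subseteq D$''. That translation is correct, and your observation that the conditions of Lemma~\ref{lem:H-1equivalent} are then satisfied only \emph{jointly} by $E_1$ and $E_2$ identifies the right difficulty. The gap is in your proposed endgame: you aim to show that one of $E_1,E_2$ must itself satisfy one of conditions \one--\three\ of Lemma~\ref{lem:H-1equivalent}, contradicting $E_k\not\subseteq\supp(H^{-1}(\Psi(S_i)))$. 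That target is unreachable. There are joint configurations in which the curve satisfies a condition of the lemma while each $E_k$ separately exhibits a pattern fully consistent with lying outside the support. Concretely, in case \one\ take $E_1\subseteq\gcd(D^j_{j+1},D^{j+1}_j)$ and $E_2\subseteq\gcd(D^\mu_{\mu+1},D^{\mu+1}_\mu)$ for some $\mu\in\{j+1,\dots,m\}$: then $E_1$ is contained in no in-spoke divisor, in both rim divisors at position $j$ and in every out-spoke divisor except $D_{j,j+1}$, and symmetrically for $E_2$ at position $\mu$. Each of these is exactly an allowed pattern for a divisor not contained in the support (it is the degenerate arc $\mu=\nu-1$ of Proposition~\ref{prop:vanishing}), so no contradiction in cohomological degree $-1$ is available for either $E_k$ individually, no matter how the relations \eqref{eqn:relations1}--\eqref{eqn:relations2} are deployed.

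The missing idea is that the contradiction must come from a different cohomological degree. The paper first establishes Proposition~\ref{prop:vanishing}, which pins down the complete arc pattern of wheel divisors containing any $E\not\subseteq\supp(\Psi(S_i))$ (the support of the whole complex, which here equals $\supp(H^{-1}(\Psi(S_i)))$ by Theorem~\ref{thm:intromain1}\one), and then shows in each case of Proposition~\ref{prop:H-1} that the out-spoke arcs of $E_1$ and $E_2$ together cover \emph{all} out-spokes of the wheel $W_i$. Hence every map $\phi^\vee(b^{\mathrm{op}})$ for $b\colon h\to i$ vanishes along $E_1\cap E_2$, which by Proposition~\ref{prop:cohomology}\two\ would force $H^{-2}(\Psi(S_i))\neq 0$ (equivalently, would produce a $\vartheta$-stable module with a generator at $i$), impossible for $i\neq 0$. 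Your plan invokes neither Proposition~\ref{prop:vanishing} nor the vanishing of $H^{-2}$, and Lemma~\ref{lem:technicalWheels}\two\ alone cannot substitute for them, so the case analysis as described cannot close.
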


\begin{proof}
 In light of Corollary~\ref{cor:dim1or2}, we need only show that an irreducible component in the support of $H^{-1}(\Psi(S_i))$ cannot be of the form $E_1\cap E_2$, where $E_1$ and $E_2$ are torus-invariant divisors. We suppose otherwise and seek a contradiction. Neither $E_1$ nor $E_2$ is contained in $\supp H^{-1}(\Psi(S_i))$, otherwise $E_1 \cap E_2$ wouldn't be an irreducible component. Moreover, $H^{k}(\Psi(S_i))=0$ for $k\neq -1$ by Theorem~\ref{thm:intromain1}\one, so neither $E_1$ nor $E_2$ is contained in the support of $\Psi(S_i)$. The support of $H^{-1}(\Psi(S_i))$ is the union of the subschemes $Z_j(i)$ for $1\leq j\leq n$ by Proposition~\ref{prop:H-1}, so the curve $E_1\cap E_2$ lies in $Z_j(i)$ for some $1\leq j\leq n$. We claim that either $E_1$ or $E_2$ labels every outspoke in $W_{i}$ and hence every out-spoke vanishes on the locus $E_1\cap E_2$. Assuming the claim, Proposition~\ref{prop:cohomology}\two\ implies that $E_1\cap E_2$ lies in $\supp H^{-2}(\Psi(S_i))$, but this contradicts the final clause in Proposition~\ref{prop:cohomology}\two\ since $i\neq 0$.  This proves the result assuming the claim.

 We prove the claim via case-by-case analysis of the value of $j$ according to Proposition~\ref{prop:H-1}.

\smallskip

\textsc{Case 1:} 
 For $1\leq j\leq m$, assume without loss of generality that $E_1$ lies in $\gcd(D_{j+1}^j,D_j^{j+1})$ and $E_2$ lies in $\lcm\big(D^1,\dots,D^m,\gcd(D_{j+2}^{j+1},D^{j+2}_{j+1}),\dots,\gcd(D_{1}^{m},D^{1}_{m})\big)-\lcm(D^j,D^{j+1})$. Since $E_1$ does not lie in $\supp(\Psi(S_i))$ and since $E_1$ lies in both $D_{j+1}^j$ and $D_j^{j+1}$ which label consecutive edges on the boundary of the wheel $W_{i}$, Proposition~\ref{prop:vanishing} implies that $E_1$ labels every out-spoke in $W_{i}$ except $D_{j,j+1}$. Now, since $E_2$ is not contained in $\supp(\Psi(S_i))$ we have that either:
\begin{enumerate}
\item $E_2$ is contained in $\gcd(D_{\mu+1}^\mu,D_\mu^{\mu+1})$ for some $j+1\leq \mu\leq m$. Proposition~\ref{prop:vanishing} then implies that $E_2$ labels every out-spoke in $W_{i}$ except $D_{\mu,\mu+1}$, so $E_2$ labels $D_{j,j+1}$; or
\item $E_2$ is contained in $D^\mu$ for some $1\leq \mu\leq m$. Since $E_2$ is contained in neither $D^j$ nor $D^{j+1}$, Lemma~\ref{lem:technicalWheels}\one\ implies that either $E_2$ labels $D_{j,j+1}$, or $E_2$ labels $D_{j+1}^j$ and $D_j^{j+1}$. In this latter case, Proposition~\ref{prop:vanishing} implies that $E_2$ labels no in-spokes, so it cannot label $D^\mu$ for some $1\leq \mu\leq m$, a contradiction. Thus, $E_2$ must label $D_{j,j+1}$ after all.
\end{enumerate}
 Thus, either $E_1$ or $E_2$ labels every outspoke in $W_{i}$ which proves the claim for $1\leq j\leq m$.

\smallskip

\textsc{Case 2:}
For $m+1 \leq j \leq 2m-3$, assume without loss of generality that $E_1$ is contained in $\lcm(D^1,D^{\nu_j-1},D^{\nu_j})-\lcm(D^1,D^{\nu_j})$ and $E_2$ is contained in $\lcm(D^1,D^{\nu_j},D^{\nu_j+1},\dots,D^m)-\lcm(D^1,D^{\nu_j})$. The condition on $E_1$ means that $E_1 \not\subseteq D^1$, $E\not\subseteq D^{\nu_j}$ and $E_1 \subseteq D^{\nu_j-1}$. Now, since $E_1$ is not contained in $\supp(\Psi(S_i))$, Proposition~\ref{prop:vanishing} implies that $E_1$ labels all of the out-spokes $D_{\nu_j,\nu_j+1}, D_{\nu_j+1,\nu_j+2},\dots, D_{m,1}$. The condition on $E_2$ means that $E_2 \not\subseteq D^1$ and $E_2\not\subseteq D^{\nu_j}$, while $E_2 \subseteq D^{\mu}$ for some $\nu_{j}+1 \leq \mu \leq m$. Since $E_2$ is not contained in $\supp(\Psi(S_i))$, Proposition~\ref{prop:vanishing} implies that $E_2$ labels all of the out-spokes $D_{1,2}, D_{2,3},\dots, D_{\nu_j-1,\nu_j}$. Thus, either $E_1$ or $E_2$ labels every outspoke in $W_{i}$ which proves the claim for $m+1 \leq j \leq 2m-3$.

\smallskip

\textsc{Case 3:}
Finally, for $2m-2 \leq j \leq n$, the curve $E_1 \cap E_2$ is contained in the intersection of the divisors $\lcm(D^{\mu},D^{\mu_j},D^{\nu_j})-\lcm(D^{\mu_j},D^{\nu_j})$ for $\mu \in \{1,\dots, \mu_j-1  \}\cup \{\nu_j-1\}$. At least one of $E_1, E_2$ is contained in the divisor with index $\nu_j-1$, and if both are then at least one is contained in a divisor with index $\mu \in \{1,\dots, \mu_j-1\}$. Thus, we may assume without loss of generality that $E_1$ is contained in a divisor with index $\mu \in \{1,\dots, \mu_j-1\}$ and $E_2$ is contained in the divisor with index $\nu_j-1$. This means firstly that $E_1\subseteq D^{\mu}$, $E_1\not\in D^{\mu_j}$ and $E_1\not\in D^{\nu_j}$. Since $E_1$ is not contained in $\supp(\Psi(S_i))$ and since $\mu, \mu_j, \nu_j$ are in cyclic order, Proposition~\ref{prop:vanishing} implies that $E_1$ labels all of the out-spokes $D_{\mu_j,\mu_j+1}, \dots, D_{\nu_j-1,\nu_j}$. Secondly, it means that $E_2$ labels the in-spoke  divisor $D^{\nu_j-1}$ and does not label the in-spoke divisors $D^{\mu_j}$ and $D^{\nu_j}$. Since $E_2$ does not lie in $\supp(\Psi(S_i))$ and since $\mu_j, \nu_j-1, \nu_j$ are in cyclic order, Proposition~\ref{prop:vanishing} implies that $E_2$ labels all of the out-spokes $D_{\nu_j,\nu_j+1}, \dots, D_{\mu_j-1,\mu_j}$. Thus, either $E_1$ or $E_2$ labels every outspoke in $W_{i}$ which proves the claim for $2m-3 \leq j \leq n$.
\end{proof}

\begin{proof}[Proof of Theorem~\ref{thm:intromain2}]
This follows from Propositions~\ref{prop:PsiS0}, \ref{prop:type0or1wall} and \ref{prop:H-1divisor}.
\end{proof}

\end{document}